\def\input@path{{"D:/EPFL/Notes/Article - Countings of toric quiver representations in higher depth/"}}
\newcommand{\binom}[2]{{#1 \choose #2}}
\theoremstyle{definition}
\newtheorem{df}{Definition}[subsection]
\theoremstyle{plain}
\newtheorem{thm}[df]{Theorem}
\newtheorem*{thm*}{Theorem}
\newtheorem{cj}[df]{Conjecture}
\newtheorem*{cj*}{Conjecture}
\newtheorem{prop}[df]{Proposition}
\newtheorem*{prop*}{Proposition}
\newtheorem{lem}[df]{Lemma}
\newtheorem*{lem*}{Lemma}
\newtheorem{cor}[df]{Corollary}
\theoremstyle{remark}
\newtheorem{exmp}[df]{Example}
\newtheorem{rmk}[df]{Remark}
\DeclareMathOperator{\Ker}{Ker}
\DeclareMathOperator{\End}{End}
\DeclareMathOperator{\Aut}{Aut}
\DeclareMathOperator{\Id}{Id}
\DeclareMathOperator{\Hom}{Hom}
\DeclareMathOperator{\Ext}{Ext}
\DeclareMathOperator{\Tr}{Tr}
\DeclareMathOperator{\GL}{GL}
\DeclareMathOperator{\supp}{supp}
\DeclareMathOperator{\rk}{rk}
\DeclareMathOperator{\val}{val}
\DeclareMathOperator{\Rep}{Rep}
\DeclareMathOperator{\vol}{vol}
\DeclareMathOperator{\Exp}{Exp}
\DeclareMathOperator{\Hilb}{Hilb}
\DeclareMathOperator{\Spec}{Spec}
\newcommand\dd{\mathbf{d}}
\newcommand\ee{\mathbf{e}}
\newcommand\mm{\mathbf{m}}
\newcommand\rr{\mathbf{r}}
\newcommand\HH{\mathrm{H}}
\newcommand\NN{\mathbb{N}}
\newcommand\ZZ{\mathbb{Z}}
\newcommand\QQ{\mathbb{Q}}
\newcommand\FF{\mathbb{F}}
\newcommand\KK{\mathbb{K}}
\newcommand\CC{\mathbb{C}}
\begin{document}
\title{Positivity for toric Kac polynomials in higher depth}
\author{Tanguy Vernet\thanks{\'Ecole Polytechnique Fédérale de Lausanne, Chair of Arithmetic Geometry
(ARG)}}
\maketitle
\begin{abstract}
We prove that the polynomials counting locally free, absolutely indecomposable,
rank 1 representations of quivers over rings of truncated power series
have non-negative coefficients. This is a generalisation to higher
depth of positivity for toric Kac polynomials. The proof goes by inductively
contracting/deleting arrows of the quiver and is inspired from a previous
work of Abdelgadir, Mellit and Rodriguez-Villegas on toric Kac polynomials.

We also relate counts of absolutely indecomposable quiver representations
in higher depth and counts of jets over fibres of quiver moment maps.
This is expressed in a plethystic identity involving generating series
of these counts. In rank 1, we prove a cohomological upgrade of this
identity, by computing the compactly supported cohomology of jet spaces
over preprojective stacks. This is reminiscent of PBW isomorphisms
for preprojective cohomological Hall algebras. Finally, our plethystic
identity allows us to prove two conjectures by Wyss on the asymptotic
behaviour of both counts, when depth goes to infinity.
\end{abstract}

\section{Introduction}

Given a quiver $Q$ and a dimension vector $\dd\in\ZZ_{\geq0}^{Q_{0}}$,
there is a polynomial $A_{Q,\dd}$ which counts absolutely indecomposable
representations of $Q$ over finite fields. These polynomials were
introduced by Kac in \cite{Kac83} and have motivated considerable
work in geometric representation theory since then. Of particular
interest are conjectures \cite[Conj. 1-2.]{Kac83} on the coefficients
of $A_{Q,\dd}$, later refined by Bozec and Schiffmann \cite[Conj. 1.3.]{BS19a}.
These conjectures state that the coefficients of $A_{Q,\dd}$ are
non-negative and can be interpreted as (graded) dimensions of a Borcherds
algebra. Their proofs largely rely on the study of quiver moment maps
$\mu_{Q,\dd}$ and their geometry. Kac's original conjectures were
proved by studying Nakajima's quiver varieties \cite{Nak94,Nak98}
and representations of Kac-Moody algebras on their cohomology \cite{CBVB04,Hau10,HLRV13b}.
More recently, the development of cohomological Hall algebras built
from the preprojective stacks $\left[\mu_{Q,\dd}^{-1}(0)/\GL(\dd)\right]$
\cite{SV13b,RS17,YZ18a,YZ20,SV20} led to the discovery of a suitable
Borcherds algebra $\mathfrak{g}_{Q}$ categorifying Kac's polynomials
- in other words, a proof of Bozec and Schiffmann's conjecture - along
with new actions of $\mathfrak{g}_{Q}$ on the cohomology of Nakajima's
quiver varieties \cite{DM20,Dav20,DHSM23}.

In this paper, we are interested in generalisations of Kac's polynomials
to quiver representations over truncated polynomial rings $\mathcal{O}_{\alpha}:=\FF_{q}[t]/(t^{\alpha}),\ \alpha\geq1$.
More precisely, we study the counts $A_{Q,\rr,\alpha}$ of locally
free, absolutely indecomposable representations of $Q$ over $\mathcal{O}_{\alpha}$,
in rank $\rr\in\ZZ_{\geq0}^{Q_{0}}$. These are still quite mysterious;
for instance, showing that $A_{Q,\rr,\alpha}$ is polynomial in $q$
is still an open problem for $\rr>\underline{1}$. More is known about
toric representations i.e. representations of rank $\rr=\underline{1}$.
These were recently studied by Hausel, Letellier, Rodriguez-Villegas
\cite{HLRV18} and Wyss \cite{Wys17b}, who established explicit polynomial
formulas for $A_{Q,\underline{1},\alpha}$ and conjectured that $A_{Q,\underline{1},\alpha}$
has non-negative coefficients \cite[Rmk. 7.7.i.]{HLRV18}. We prove
this conjecture.

We also investigate the relation of $A_{Q,\rr,\alpha}$ with quiver
moment maps. Since we are working over $\mathcal{O}_{\alpha}$, a
natural object to consider is the moment map $\mu_{Q,\rr,\alpha}$
induced by $\mu_{Q,\rr}$ on jet spaces of quiver moduli. In \cite{Wys17b},
Wyss computed the counts $\sharp_{\FF_{q}}\mu_{Q,\rr,\alpha}^{-1}(0)$
for $\rr=\underline{1}$, in the form of an Igusa local zeta function,
and conjectured an \textit{asymptotic} relation between $\sharp_{\FF_{q}}\mu_{Q,\rr,\alpha}^{-1}(0)$
and $A_{Q,\rr,\alpha}$, when $\alpha$ goes to infinity. Wyss also
conjectured that the limits of both counts have non-negative coefficients
(see below for more details). In this paper, we find an identity between
generating series encoding $\sharp_{\FF_{q}}\mu_{Q,\rr,\alpha}^{-1}(0)$
and $A_{Q,\rr,\alpha}$, \textit{for all} $\rr$ and a \textit{fixed}
value of $\alpha$. This identity allows us to prove both of Wyss'
conjectures.

Regarding geometric representation theory, Geiss, Leclerc and Schröer
recently explored representations of quivers over rings of truncated
power series in a series of papers \cite{GLS17a,GLS17b,GLS16,GLS18a,GLS18b}.
In particular, they use moduli of quiver representations in higher
depth to build realisations of symmetrizable Kac-Moody algebras \cite{GLS16,GLS18a}.
Their constructions exploit nilpotent subvarieties of $\mu_{Q,\rr,\alpha}^{-1}(0)$,
whose relations to cohomological Hall algebras are well understood
when working over a field \cite{Hen22b}. However, as far as we know,
no relations with $A_{Q,\rr,\alpha}$ have been found up to now. Instead,
using a purity argument, we find that the compactly supported cohomology
of $\left[\mu_{Q,\rr,\alpha}^{-1}(0)/\GL(\rr,\mathcal{O}_{\alpha})\right]$
categorifies $A_{Q,\rr,\alpha}$ when $\rr\leq\underline{1}$ - see
Theorem \ref{Thm/IntroCohIntgr} below. This is reminiscent of the
PBW isomorphism for preprojective cohomological Hall algebras established
in \cite{Dav17a} and we regard this as evidence that the cohomology
of jet spaces over preprojective stacks may also carry representation-theoretic
structure. Let us describe these results in more details.

\paragraph*{Higher depth Kac polynomials and preprojective stacks}

Our first results generalise the relation between Kac polynomials
and counts of $\FF_{q}$-points over $\mu_{Q,\dd}^{-1}(0)$ to higher
depth. The following formula was established by Mozgovoy \cite{Moz11a},
using dimensional reduction:\[
\sum_{\dd\in\NN^{Q_0}}
\frac{\sharp_{\FF_q}\mu_{Q,\dd}^{-1}(0)}{\sharp_{\FF_q}\GL(\dd)}
\cdot q^{\langle\dd,\dd\rangle}t^{\dd}
=
\Exp_{q,t}\left(
\sum_{\dd\in\NN^{Q_0}\setminus\{0\}}
\frac{A_{Q,\dd}}{1-q^{-1}}\cdot t^{\dd}
\right),
\] where $\langle\bullet,\bullet\rangle$ is the Euler form of $Q$.
More precisely, the formula relates the stacky point count of $\left[\mu_{Q,\dd}^{-1}(0)/\GL(\dd)\right]$
- the moduli stack of objects in a category of homological dimension
2 - to the count of objects in the category of representations of
$Q$, which has homological dimension 1. Because $\Rep(Q)$ has dimension
1, the point count of $\left[\mu_{Q,\dd}^{-1}(0)/\GL(\dd)\right]$
is directly related to the count of objects of $\Rep(Q)$ with endomorphisms.
This is in turn related to Kac polynomials, using Krull-Schmidt decomposition
and Galois descent.

As was shown by Geiss, Leclerc and Schröer, the category of locally
free representations of $Q$ over $\mathcal{O}_{\alpha}$ also has
homological dimension 1. We show that the above argument can be generalised
to that setting, which results in:

\begin{thm} \label{Thm/IntroExpFmlKacPol}

Let $Q$ be a quiver and $\alpha\geq1$. Then: \[
\sum_{\rr\in\NN^{Q_0}}
\frac{\sharp_{\FF_q}\mu_{Q,\rr,\alpha}^{-1}(0)}{\sharp_{\FF_q}\GL(\rr,\mathcal{O}_{\alpha})}
\cdot q^{\alpha\langle\rr,\rr\rangle}t^{\rr}
=
\Exp_{q,t}\left(
\sum_{\rr\in\NN^{Q_0}\setminus\{0\}}
\frac{A_{Q,\rr,\alpha}}{1-q^{-1}}\cdot t^{\rr}
\right).
\]

\end{thm}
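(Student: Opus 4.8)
The plan is to mimic Mozgovoy's argument, which factors through three ingredients: (1) a dimensional-reduction-type identification of the stacky point count of $\left[\mu_{Q,\rr,\alpha}^{-1}(0)/\GL(\rr,\mathcal{O}_\alpha)\right]$ with a weighted count of locally free $\mathcal{O}_\alpha$-representations of $Q$ equipped with their endomorphisms; (2) the fact, due to Geiss--Leclerc--Schröer, that the category of locally free representations of $Q$ over $\mathcal{O}_\alpha$ has homological dimension $1$, so that the Euler pairing controls $\dim\Hom - \dim\Ext^1$ additively; (3) a Krull--Schmidt plus Galois descent bookkeeping that turns the generating series of "objects with endomorphisms" into the plethystic exponential of the series of absolutely indecomposable counts. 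So first I would set up the counting identity: for a locally free $\mathcal{O}_\alpha$-representation $M$ of rank $\rr$, the fibre $\mu_{Q,\rr,\alpha}^{-1}(0)$ over the point of quiver moduli corresponding to $M$ is (by the moment map / cotangent description in jet coordinates) an affine space of dimension equal to $\dim_{\FF_q}\Hom_{\mathcal{O}_\alpha}(M,M\otimes\text{(doubling)})$ cut out by the moment equation, and dimensional reduction identifies its point count with $q^{\dim_{\FF_q}\End_{\mathcal{O}_\alpha}(M)}$ up to the expected power of $q$; summing over isomorphism classes and dividing by $\sharp\GL(\rr,\mathcal{O}_\alpha)$ gives
\[
\sum_{\rr\in\NN^{Q_0}}\frac{\sharp_{\FF_q}\mu_{Q,\rr,\alpha}^{-1}(0)}{\sharp_{\FF_q}\GL(\rr,\mathcal{O}_\alpha)}\,q^{\alpha\langle\rr,\rr\rangle}t^{\rr}=\sum_{[M]}\frac{\sharp_{\FF_q}\End_{\mathcal{O}_\alpha}(M)}{\sharp_{\FF_q}\Aut_{\mathcal{O}_\alpha}(M)}\,t^{\underline{\rk}(M)},
\]
the sum running over isomorphism classes of locally free $\mathcal{O}_\alpha$-representations, where the homological-dimension-$1$ property of the GLS category is exactly what makes the exponent $\alpha\langle\rr,\rr\rangle+\dim\Hom-\dim\End$ collapse correctly (this is the analogue of $q^{\langle\dd,\dd\rangle}$ in Mozgovoy's formula, with $\alpha$ inserted because $\dim_{\FF_q}\mathcal{O}_\alpha=\alpha$).

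Next I would run the standard motivic/Hall-algebra manipulation on the right-hand side. Using Krull--Schmidt over the (Krull--Schmidt) category of locally free $\mathcal{O}_\alpha$-representations, every $M$ decomposes uniquely into indecomposables, and $\sharp\End(M)/\sharp\Aut(M)$ is multiplicative over "blocks" of pairwise non-isomorphic indecomposables in the usual way (a local-ring computation: for a single indecomposable $N$ appearing with multiplicity $m$, the ratio $\sharp\End(N^{\oplus m})/\sharp\Aut(N^{\oplus m})$ contributes the $q$-exponential factor $\sum_m q^{-m}\cdot(\text{something})/\cdot$, which after resumming over all indecomposables yields an infinite product). The upshot is the by-now-classical identity
\[
\sum_{[M]}\frac{\sharp_{\FF_q}\End(M)}{\sharp_{\FF_q}\Aut(M)}\,t^{\underline{\rk}(M)}=\Exp_{q,t}\!\left(\sum_{[N]\ \text{indec.}}\frac{1}{1-q^{-1}}\,t^{\underline{\rk}(N)}\right),
\]
and the only remaining task is to pass from counting \emph{indecomposable} $\FF_q$-objects to the polynomial $A_{Q,\rr,\alpha}$ counting \emph{absolutely} indecomposable ones. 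This is Galois descent: an indecomposable object over $\FF_q$ with residue field $\FF_{q^d}$ of its endomorphism division ring corresponds, over $\overline{\FF_q}$, to a Galois orbit of size $d$ of absolutely indecomposables over $\FF_{q^d}$; feeding this through the plethystic exponential (using $\Exp_{q,t}$'s behaviour under the Adams operations / base-change $q\mapsto q^d$) replaces $\sum_{[N]}t^{\underline{\rk}(N)}/(1-q^{-1})$ by $\sum_{\rr\neq 0}A_{Q,\rr,\alpha}\,t^{\rr}/(1-q^{-1})$, which is exactly the claimed right-hand side. Each of these steps is a verbatim transcription of the classical ($\alpha=1$) case once the category-theoretic input of Geiss--Leclerc--Schröer is in place.

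The main obstacle is step (1): establishing the dimensional-reduction identity $\sharp_{\FF_q}\mu_{Q,\rr,\alpha}^{-1}(0)=q^{\,?}\cdot\sum_{[M]}\sharp\End(M)/(\dots)$ in the jet-space setting. Over a field this is Mozgovoy's theorem (the fibre of the moment map, stratified by the underlying $\Rep(Q)$-point, is an affine bundle whose rank is read off from $\Ext^1$ and whose base-stratum count involves $\End$), but here $\mu_{Q,\rr,\alpha}$ is the map induced on $\alpha$-jets, so one must check that: the jet space $J_\alpha\mu_{Q,\rr}^{-1}(0)$ is (as a set of $\FF_q$-points) the same as $\mu_{Q,\rr,\alpha}^{-1}(0)$ via the exponential/Greenberg-functor dictionary; the relevant $\Hom$/$\Ext$ groups are the $\mathcal{O}_\alpha$-linear ones computed inside the GLS category; and the affine-fibration structure of the field case survives after taking jets, which is where the homological-dimension-$1$ statement of GLS and a flatness/smoothness check on the moment equation truncated mod $t^\alpha$ do the work. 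I would isolate this as a lemma, prove it by combining the Greenberg-scheme description of jets of $\mu_{Q,\rr}^{-1}(0)$ with the cotangent-stack interpretation of the doubled quiver, and then the rest of the proof is the formal machinery above.
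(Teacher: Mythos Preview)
Your proposal is correct and follows essentially the same architecture as the paper: relate the stacky count of $\mu_{Q,\rr,\alpha}^{-1}(0)$ to the weighted count $\sum_{[M]}\sharp\End(M)/\sharp\Aut(M)$, then invoke Mozgovoy's linear-stack machinery (Krull--Schmidt plus Galois descent) to obtain the plethystic exponential of $A_{Q,\rr,\alpha}/(1-q^{-1})$.

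The one point worth noting is that the ``main obstacle'' you anticipate in step~(1) is handled in the paper much more directly than via Greenberg schemes or flatness checks on truncated moment equations. The paper proves an explicit four-term exact sequence (Proposition~\ref{Prop/MomMapExactSeq})
\[
0 \rightarrow \Ext^1(M,M)^{\vee} \rightarrow R(Q^{\mathrm{op}},\rr,\mathcal{O}_{\alpha}) \xrightarrow{\mu_{Q,\rr,\alpha}(x,\bullet)} \mathfrak{gl}(\rr,\mathcal{O}_{\alpha}) \rightarrow \End(M)^{\vee} \rightarrow 0,
\]
obtained by applying $\Hom_{\mathcal{O}_{\alpha}Q}(\bullet,M)$ to the GLS projective resolution of $M$ and dualising. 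This immediately gives both $\sharp\{y:\mu(x,y)=0\}=\sharp\Ext^1(M,M)$ and, by counting dimensions along the exact sequence, $\sharp\Ext^1(M,M)=q^{-\alpha\langle\rr,\rr\rangle}\sharp\End(M)$; no jet-space smoothness analysis is needed. In other words, the homological-dimension-$1$ input from GLS already \emph{is} the dimensional reduction step, packaged as a single exact sequence.
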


One advantage of this formula is that it holds for any fixed $\alpha\geq1$.
This strengthens the asymptotic connection between $A_{Q,\rr,\alpha}$
and $\sharp_{\FF_{q}}\mu_{Q,\rr,\alpha}^{-1}(0)$ conjectured by Wyss
in \cite[\S 4.]{Wys17b}. Let us recall how this works. When $\rr=\underline{1}$
and $Q$ is 2-connected, Wyss showed that the following two sequences
converge and that their limits are rational fractions in $q$:\[
A_{Q}(q):=
\underset{\alpha\rightarrow +\infty}{\lim}
\left(q^{-\alpha (1-\langle\rr,\rr\rangle)}\cdot A_{Q,\underline{1},\alpha}(q)\right),
\]
\[
B_{\mu_{Q}}(q):=
\underset{\alpha\rightarrow +\infty}{\lim}
\left(q^{-\alpha(2\sharp Q_1-\sharp Q_0+1)}\cdot\sharp_{\FF_q}\mu_{Q,\underline{1},\alpha}^{-1}(0)\right).
\] He further conjectured that $A_{Q}$ and $B_{\mu_{Q}}$ are directly
related. Indeed, this is implied by Theorem \ref{Thm/IntroExpFmlKacPol}
:

\begin{cor} \label{Cor/IntroRelAvsB}

Let $Q$ be a 2-connected quiver. Then: \[
\frac{B_{\mu_{Q}}(q)}{(1-q^{-1})^{\sharp Q_0}}=\frac{A_{Q}(q)}{1-q^{-1}}.
\]

\end{cor}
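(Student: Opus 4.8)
The plan is to extract Corollary~\ref{Cor/IntroRelAvsB} from Theorem~\ref{Thm/IntroExpFmlKacPol} by specialising the plethystic identity to the coefficient of $t^{\underline{1}}$ and then passing to the limit $\alpha\to+\infty$. First I would observe that, since both sides of the identity in Theorem~\ref{Thm/IntroExpFmlKacPol} are power series in the variables $t_i$, I may compare the coefficients of the multidegree $t^{\underline{1}}=\prod_{i\in Q_0}t_i$. On the right-hand side, expanding the plethystic exponential $\Exp_{q,t}$, the coefficient of $t^{\underline{1}}$ receives contributions from every way of writing $\underline{1}$ as an (unordered) sum of nonzero dimension vectors $\rr^{(1)}+\dots+\rr^{(k)}$, each such term being a product of the corresponding $A_{Q,\rr^{(j)},\alpha}/(1-q^{-1})$ together with an Adams-operation/plethystic correction. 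The key structural input is that these decompositions of $\underline{1}$ are indexed precisely by set partitions of $Q_0$, with block $S$ contributing a factor built from $A_{Q|_S,\underline{1},\alpha}$ (the Kac count of the full subquiver on $S$). On the left-hand side, the coefficient of $t^{\underline{1}}$ is exactly $\sharp_{\FF_q}\mu_{Q,\underline{1},\alpha}^{-1}(0)\big/\sharp_{\FF_q}\GL(\underline{1},\mathcal{O}_\alpha)\cdot q^{\alpha\langle\underline{1},\underline{1}\rangle}$, since $\underline{1}$ admits no further refinement issues here; note $\GL(\underline{1},\mathcal{O}_\alpha)=(\mathcal{O}_\alpha^\times)^{Q_0}$ has $\sharp_{\FF_q}=q^{\alpha\sharp Q_0}(1-q^{-1})^{\sharp Q_0}$.

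Next I would renormalise and take $\alpha\to+\infty$. Multiply the $t^{\underline{1}}$-coefficient identity through by the appropriate power of $q^{-\alpha}$ so that the left-hand side becomes $q^{-\alpha(2\sharp Q_1-\sharp Q_0+1)}\cdot\sharp_{\FF_q}\mu_{Q,\underline{1},\alpha}^{-1}(0)\big/(1-q^{-1})^{\sharp Q_0}$, using $\langle\underline{1},\underline{1}\rangle=\sharp Q_0-\sharp Q_1$ to match exponents; by Wyss' convergence result this tends to $B_{\mu_Q}(q)/(1-q^{-1})^{\sharp Q_0}$. For the right-hand side I would argue that, because $Q$ is 2-connected, in the limit every partition of $Q_0$ into more than one block contributes a term whose $q$-power exponent (after the global renormalisation) is strictly subdominant — the renormalising exponent of $A_Q$ is $\alpha(1-\langle\underline{1},\underline{1}\rangle)$, which is superadditive over a nontrivial decomposition of a 2-connected quiver — so only the trivial one-block partition survives. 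Likewise the plethystic/Adams corrections $\psi_n$ for $n\geq2$ only enter alongside proper sub-sums of $\underline{1}$ and are killed in the limit. Hence the renormalised right-hand side converges to $A_Q(q)/(1-q^{-1})$, and equating the two limits gives the claim.

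The main obstacle I anticipate is the bookkeeping in the second step: making precise that, after the common renormalisation by $q^{-\alpha(2\sharp Q_1-\sharp Q_0+1)}$, every term coming from a nontrivial set partition of $Q_0$ (and every higher Adams operation) decays as $\alpha\to\infty$. This requires a clean statement that $2\sharp Q_1-\sharp Q_0+1$ is \emph{strictly} super-additive under splitting a 2-connected quiver $Q$ into full subquivers on the blocks of a partition — equivalently that a 2-connected graph cannot be cut into pieces without destroying edges between blocks, so that $\sharp Q_1>\sum_j\sharp (Q|_{S_j})_1$ strictly — together with the already-known fact (from Wyss, invoked in the statement) that each $q^{-\alpha(1-\langle\underline 1,\underline 1\rangle)}A_{Q|_{S_j},\underline 1,\alpha}$ converges. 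Once that dominance estimate is in place, the corollary follows by reading off leading terms; I would present this as a short lemma on the exponents, then a one-line limit computation.
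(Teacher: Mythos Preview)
Your approach is essentially the paper's own: extract the $t^{\underline{1}}$-coefficient of the plethystic identity (noting that the Adams operations $\psi_m$ for $m\geq 2$ in fact do not contribute at all here, since $\underline{1}$ is squarefree), renormalise by $q^{-\alpha b(Q)}$, and use the strict Betti-number inequality for 2-connected quivers (the paper's Proposition~\ref{Prop/BettiNb}) to kill the terms from nontrivial partitions in the limit. One small sharpening: for the subquivers $Q\restriction_{S_j}$ you should not invoke \emph{convergence} of the normalised $A$-count from Wyss, since these subquivers need not be 2-connected; what is actually needed (and what Wyss provides, as the paper cites) is only the degree bound $\deg_q A_{Q\restriction_I,\underline{1},\alpha}=\alpha\, b(Q\restriction_I)$, which together with $b(Q)>\sum_j b(Q\restriction_{S_j})$ forces the nontrivial terms to zero.
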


Another approach to obtain Kac polynomials from quiver moment maps
uses generic fibers of $\mu_{Q,\dd}$. This strategy led to the first
proof of Kac's conjectures for indivisible dimension vectors by Crawley-Boevey
and Van den Bergh \cite{CBVB04}. Their results were generalised more
recently by Davison in \cite{Dav23a}. We prove a generalisation in
higher depth of Crawley-Boevey and Van den Bergh's result. Given $\rr\in\NN^{Q_{0}}$,
we say that $\lambda\in\ZZ^{Q_{0}}$ is generic with respect to $\rr$
if $\lambda\cdot\rr=0$ and $\lambda\cdot\rr'\ne0$ for all $0<\rr'<\rr$.
Such a $\lambda$ exists if, and only if, $\rr$ is indivisible.

\begin{thm} \label{Thm/IntroCountMomMapGenFib}

Let $\rr\in\NN^{Q_{0}}$ be an indivisible rank vector and $\lambda\in\ZZ^{Q_{0}}$
be generic with respect to $\rr$. Then: \[
\frac{\sharp_{\FF_q}\mu_{Q,\rr,\alpha}^{-1}(t^{\alpha-1}\cdot\lambda)}{\sharp_{\FF_q}\GL(\rr,\mathcal{O}_{\alpha})}=q^{-\alpha\langle\rr,\rr\rangle}\cdot\frac{A_{Q,\rr,\alpha}(q)}{1-q^{-1}}.
\]

\end{thm}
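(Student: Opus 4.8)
The plan is to mimic the classical argument of Crawley-Boevey and Van den Bergh, adapted to the truncated-power-series setting, by producing a bijection (at the level of groupoid cardinalities) between $\FF_q$-points of the twisted fiber $\mu_{Q,\rr,\alpha}^{-1}(t^{\alpha-1}\cdot\lambda)$ modulo $\GL(\rr,\mathcal{O}_\alpha)$ and suitably weighted counts of locally free representations of $Q$ over $\mathcal{O}_\alpha$. First I would set up the deformed preprojective algebra $\Pi^{\lambda}_{Q}$ over $\mathcal{O}_\alpha$, or rather work directly with the moment-map equation $\sum_{a\in Q_1}[x_a,y_a] = t^{\alpha-1}\lambda$ read in $\prod_{i}\Mat_{r_i}(\mathcal{O}_\alpha)$. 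The genericity hypothesis $\lambda\cdot\rr=0$, $\lambda\cdot\rr'\ne 0$ for $0<\rr'<\rr$ is exactly what forces a point in the twisted fiber to correspond, after reduction mod $t$, to a representation of $Q$ over $\FF_q$ with \emph{scalar} endomorphism algebra along each indecomposable summand appearing with full rank — the twist by $t^{\alpha-1}$ lands in the socle $t^{\alpha-1}\mathcal{O}_\alpha\cong\FF_q$, so the obstruction lives one level down and genericity kills the off-diagonal contributions. This should yield that the groupoid of points of the twisted fiber is equivalent to the groupoid of locally free absolutely indecomposable $\mathcal{O}_\alpha$-representations of rank $\rr$, each counted with automorphisms.

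The second step is the bookkeeping of the normalization factors. One computes $\sharp_{\FF_q}\GL(\rr,\mathcal{O}_\alpha) = q^{\alpha(\sharp Q_0 \text{ adjustments})}\cdot\prod_i \sharp_{\FF_q}\GL(r_i,\FF_q)$ — concretely $\sharp\GL(r,\mathcal{O}_\alpha)=q^{(\alpha-1)r^2}\sharp\GL(r,\FF_q)$ — and the dimension of the space of $(x_a,y_a)$ over $\mathcal{O}_\alpha$ is $2\alpha\sum_{a}r_{s(a)}r_{t(a)}$, while the moment map target has dimension $\alpha\sum_i r_i^2$ minus the trace-zero condition. Since $\langle\rr,\rr\rangle = \sum_i r_i^2 - \sum_a r_{s(a)}r_{t(a)}$, assembling these exponents of $q$ produces the factor $q^{-\alpha\langle\rr,\rr\rangle}$. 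The residual $\frac{1}{1-q^{-1}}$ comes from the scalar automorphisms $\mathcal{O}_\alpha^\times$ acting on an absolutely indecomposable representation: passing from the count weighted by $1/\sharp\Aut$ to the plain count $A_{Q,\rr,\alpha}$ contributes $\sharp\mathcal{O}_\alpha^\times = q^{\alpha-1}(q-1) = q^\alpha(1-q^{-1})$, and the $q^\alpha$ gets absorbed into the $q^{-\alpha\langle\rr,\rr\rangle}$ bookkeeping via $\langle\rr,\rr\rangle$ versus $1-\langle\rr,\rr\rangle$. Here I would lean on the fact (established by Geiss–Leclerc–Schröer and used already for Theorem \ref{Thm/IntroExpFmlKacPol}) that the category of locally free $\mathcal{O}_\alpha$-representations of $Q$ has homological dimension $1$, so that the relevant Euler form over $\mathcal{O}_\alpha$ is $\alpha$ times the Euler form of $Q$.

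The main obstacle I anticipate is making the reduction step fully rigorous over $\mathcal{O}_\alpha$ rather than over a field: one needs a Morita-type or deformation-theoretic statement saying that every $\FF_q$-point of $\mu_{Q,\rr,\alpha}^{-1}(t^{\alpha-1}\lambda)$ actually comes from a locally free module over the deformed preprojective algebra, and that the base change back to $Q$-representations (forgetting the $y_a$) is the right equivalence of groupoids. Over a field this is Crawley-Boevey's theorem that simple $\Pi^\lambda$-modules of dimension $\rr$ with $\lambda$ generic are controlled by the geometry of $\mu^{-1}(\lambda)$; over $\mathcal{O}_\alpha$ I would run an induction on $\alpha$, peeling off the top $t^{\alpha-1}$-layer and using that the obstruction to lifting a representation from depth $\alpha-1$ to depth $\alpha$ lives in $\Ext^1$ groups whose relevant pairing is again governed by genericity of $\lambda$. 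An alternative, possibly cleaner route is to deduce this theorem from Theorem \ref{Thm/IntroExpFmlKacPol} together with a Fourier-transform / Deligne–Lusztig style comparison between the fiber over $0$ and the generic twisted fiber, exploiting that $A_{Q,\rr,\alpha}$ is the unique quantity making the plethystic identity hold; but the direct bijective argument is more transparent and I would present that, relegating the cohomological-purity input to the rank-$\underline 1$ case treated elsewhere in the paper.
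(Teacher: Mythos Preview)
Your overall strategy is the right one and matches the paper's: project $\mu_{Q,\rr,\alpha}^{-1}(t^{\alpha-1}\lambda)$ onto $R(Q,\rr,\mathcal{O}_\alpha)$, show that genericity of $\lambda$ forces the image to be exactly the absolutely indecomposable locus, and then sum fibre cardinalities. However, two of your concrete steps are incorrect and would not produce the stated formula.

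First, the forgetful map $(x,y)\mapsto x$ is \emph{not} an equivalence of groupoids. For a fixed $x$ corresponding to a representation $M$, the fibre is an affine space of cardinality $\sharp\Ext^1(M,M)$, by the exact sequence of Proposition~\ref{Prop/MomMapExactSeq} (this is the key structural input, not a Morita or deformation statement). So the quantity you are computing is
\[
\frac{\sharp\mu_{Q,\rr,\alpha}^{-1}(t^{\alpha-1}\lambda)}{\sharp\GL(\rr,\mathcal{O}_\alpha)}
=\sum_{[M]\ \mathrm{abs.\ indec.}}\frac{\sharp\Ext^1(M,M)}{\sharp\Aut(M)},
\]
not a plain count weighted by $1/\sharp\Aut$.

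Second, your bookkeeping is wrong: for an absolutely indecomposable $M$ the automorphism group is \emph{not} $\mathcal{O}_\alpha^\times$ in general. The endomorphism ring $\End(M)$ is a local $\FF_q$-algebra with residue field $\FF_q$ (Proposition~\ref{Prop/EndRings}), but its dimension varies with $M$. What one actually uses is the pair of identities
\[
\sharp\Aut(M)=(1-q^{-1})\cdot\sharp\End(M),
\qquad
\sharp\End(M)=q^{\alpha\langle\rr,\rr\rangle}\cdot\sharp\Ext^1(M,M),
\]
the first because the units of a local algebra with residue field $\FF_q$ are exactly the elements with nonzero image in $\FF_q$, the second from the exact sequence of Proposition~\ref{Prop/MomMapExactSeq}. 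These combine to make every summand equal to $q^{-\alpha\langle\rr,\rr\rangle}/(1-q^{-1})$, and the sum over $[M]$ gives $A_{Q,\rr,\alpha}$. The cancellation of $\End$ against $\Ext^1$ is the mechanism that makes the answer independent of $M$; your argument via $\mathcal{O}_\alpha^\times$ would only work for $\rr=\underline{1}$.

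Finally, the induction on $\alpha$ you anticipate as the ``main obstacle'' is unnecessary. The characterisation of the image of $\pi$ is obtained directly from the same exact sequence: $t^{\alpha-1}\lambda$ lies in the image of $\mu_{Q,\rr,\alpha}(x,\bullet)$ if and only if it pairs to zero with every $\xi\in\End(M)$. Pairing against the idempotent projecting onto a rank-$\rr'$ summand gives $\lambda\cdot\rr'=0$, which by genericity forces $M$ to be indecomposable, and indivisibility of $\rr$ then gives absolute indecomposability. No lifting along $t$-adic layers is needed.
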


\paragraph*{Positivity results}

We also prove that counts of toric quiver representations in higher
depth enjoy positivity properties, both in fixed depth $\alpha\geq1$
and asymptotically. The asymptotic result was also conjectured by
Wyss in \cite[\S 4.]{Wys17b} and roughly states that the numerator
of $B_{\mu_{Q}}$ (or equivalently, of $A_{Q}$ by Corollary \ref{Cor/IntroRelAvsB})
has non-negative coefficients. Our approach to this conjecture relies
on Wyss' explicit formula for $A_{Q}$:\[
A_Q(q)=
(1-q^{-1})^{b(Q)}\cdot
\sum_{E_1\subsetneq E_2\ldots\subsetneq E_s=Q_1}\prod_{j=1}^{s-1}\frac{1}{q^{b(Q)-b(Q\restriction_{E_j})}-1}.
\] Here $b(Q)$ stands for the Betti number of the graph underlying
$Q$ (see Section \ref{Sect/Graph} for details). We show that $A_{Q}$
is essentially the Hilbert series of the Stanley-Reisner ring of a
Cohen-Macaulay simplicial complex (see Section \ref{Sect/SRrings}
for background). This was inspired from \cite{MV22}, where similar
combinatorial formulas for local Igusa zeta functions are studied.
We obtain the following:

\begin{thm} \label{Thm/IntroPositivityA}

Let $Q$ be a 2-connected quiver. Consider the Stanley-Reisner ring
$\QQ[\Delta]$ associated to the order complex $\Delta$ of the poset
$(\Pi(Q_{1})\setminus\{\emptyset,Q_{1}\},\subseteq)$. Then:

\[
A_Q(q)=\frac{(1-q^{-1})^{b(Q)}}{1-q^{-b(Q)}}\cdot\Hilb_{\Delta}\left(u_{E}=q^{-(b(Q)-b(Q\restriction_{E}))}\right).
\] and $\Hilb_{\Delta}\left(u_{E}=q^{-(b(Q)-b(Q\restriction_{E}))}\right)$
can be presented as a rational fraction whose numerator has non-negative
coefficients.

\end{thm}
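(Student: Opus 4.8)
The plan is to reverse-engineer the combinatorial formula for $A_Q$ from Wyss into the language of Stanley--Reisner rings. First I would recall that the order complex $\Delta$ of the poset $P := (\Pi(Q_1)\setminus\{\emptyset, Q_1\}, \subseteq)$ has as its faces the chains $E_1 \subsetneq E_2 \subsetneq \cdots \subsetneq E_k$ of nonempty proper flats of the graphic matroid of $Q$ (where $\Pi(Q_1)$ denotes the lattice of flats, i.e. the closed subsets of arrows). The multigraded Hilbert series of $\QQ[\Delta]$ is the standard sum over faces $F$ of $\prod_{E \in F} \frac{u_E}{1 - u_E}$, and after the substitution $u_E = q^{-(b(Q) - b(Q\restriction_E))}$ each chain $E_1 \subsetneq \cdots \subsetneq E_{s-1}$ contributes $\prod_{j} \frac{q^{-(b(Q)-b(Q\restriction_{E_j}))}}{1 - q^{-(b(Q)-b(Q\restriction_{E_j}))}} = \prod_j \frac{1}{q^{b(Q)-b(Q\restriction_{E_j})}-1}$. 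Comparing term by term with Wyss' formula, the only discrepancy is that Wyss sums over chains \emph{ending at} $Q_1$ while $\Delta$ uses proper flats; extracting the maximal element $E_s = Q_1$ from each of Wyss' chains identifies the two sums up to the global prefactor, and matching prefactors forces the stated identity $A_Q(q) = \frac{(1-q^{-1})^{b(Q)}}{1-q^{-b(Q)}} \cdot \Hilb_\Delta(\ldots)$. I should double-check here that every subset appearing in Wyss' sum may be replaced by its matroid closure without changing $b(Q\restriction_{E_j})$ — this is exactly the statement that the Betti number depends only on the flat spanned, and it is what makes $\Delta$ (rather than the full subset lattice) the correct complex.

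For the positivity part, the key input is that $\Delta$ is Cohen--Macaulay over $\QQ$. This follows because $P$ is the proper part of the geometric lattice $\Pi(Q_1)$ of flats of the graphic matroid (2-connectedness of $Q$ guarantees the matroid is loopless and connected, so $\Pi(Q_1)$ is genuinely a geometric lattice with $\hat 0 = \emptyset$ and $\hat 1 = Q_1$), and the order complex of the proper part of any geometric lattice is shellable, hence Cohen--Macaulay over any field — this is a classical result of Folkman/Björner. Once $\QQ[\Delta]$ is Cohen--Macaulay, a linear system of parameters $\theta_1, \ldots, \theta_d$ (with $d = \dim \Delta + 1$ the length of a maximal chain in $P$, i.e. $d = b(Q)-1$) makes $\QQ[\Delta]$ a free module over $\QQ[\theta_1,\ldots,\theta_d]$, so its Hilbert series takes the form $\frac{h(q)}{(1-q)^d}$ — or after our substitution, a ratio whose denominator is a product of factors $1 - q^{-m}$ — with numerator $h$ having non-negative coefficients, these being the dimensions of the graded pieces of the Artinian reduction.

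The one real subtlety is the specialization of variables. The h-polynomial positivity above is a statement about the \emph{fine} multigraded or the standard $\ZZ$-grading (each variable $u_E$ in degree $1$); after substituting $u_E = q^{-(b(Q)-b(Q\restriction_E))}$ different variables acquire different weights, so I must argue that this weighted specialization still yields a numerator with non-negative coefficients. The clean way is to choose the linear system of parameters to be \emph{homogeneous with respect to the weighting}: concretely, pick the l.s.o.p. so that $\theta_i$ is a sum of the weighted variables with the weights built in — possible because, $\QQ$ being infinite, a generic such choice works and the weighted grading is still a grading by a finitely generated abelian semigroup refining a single $\ZZ$-grading (via $u_E \mapsto q^{-(b(Q)-b(Q\restriction_E))}$, which sends every variable to a positive power of $q^{-1}$). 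Then $\QQ[\Delta]$ is free over $\QQ[\theta_1, \ldots, \theta_d]$ compatibly with the weighted grading, the denominator is $\prod_i (1 - q^{-w_i})$ for the weights $w_i = \deg_q \theta_i$, and the numerator is the weighted Hilbert series of the Artinian reduction, manifestly a non-negative combination of powers of $q^{-1}$. The main obstacle is thus not any deep geometry but getting the bookkeeping of the two gradings right and verifying that 2-connectedness is exactly the hypothesis that turns $\Pi(Q_1)$ into a geometric lattice with the right top and bottom elements; I would isolate this as a short lemma referring to Section \ref{Sect/Graph}.
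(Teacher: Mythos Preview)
Your overall strategy---rewrite Wyss' chain sum as a specialized fine Hilbert series of an order complex, then invoke shellability and the Cohen--Macaulay property for positivity of the numerator---is exactly the paper's approach. However, there is a genuine error in how you read the statement.

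In this paper $\Pi(Q_1)$ is the \emph{Boolean lattice} of all subsets of $Q_1$ (see Example~\ref{Exmp/OrderCpx}), not the lattice of flats of the graphic matroid. Your proposed ``double-check'' that $b(Q\restriction_E)$ depends only on the flat spanned by $E$ is in fact false: in matroid language $b(Q\restriction_E)$ is the nullity $|E|-\rk(E)$, which depends on $|E|$ and not merely on the closure of $E$. A spanning tree of $Q$ and the full edge set $Q_1$ have the same closure but Betti numbers $0$ and $b(Q)>0$ respectively. Consequently Wyss' sum over chains of \emph{arbitrary} subsets does not collapse to a sum over chains of flats, and the Hilbert series you would compute with the flat-lattice order complex is not the one appearing in the theorem.

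With the correct reading the argument is actually simpler than you propose: the Boolean lattice is modular, so Proposition~\ref{Prop/LattShell} gives shellability of $\mathcal{O}(\Pi(Q_1)\setminus\{\emptyset,Q_1\})$ directly, with no appeal to the graphic matroid and no ``double-check'' needed. Two smaller points: the paper extracts both $Q_1$ \emph{and} $\emptyset$ from Wyss' chains (splitting off $E_1=\emptyset$ produces the factor $1+(q^{b(Q)}-1)^{-1}=(1-q^{-b(Q)})^{-1}$, which is where the denominator in the prefactor comes from), whereas you only mention removing the top element; and your treatment of the weighted specialization via a homogeneous system of parameters is correct and is precisely what Proposition~\ref{Prop/CMHilb} packages.
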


On the other hand, assuming that $\alpha$ is finite and fixed, we
also prove that $A_{Q,\rr,\alpha}$ has non-negative coefficients,
when $\rr=\underline{1}$. For toric Kac polynomials, there is a beautiful
graph-theoretic proof of Kac's positivity conjecture involving Tutte
polynomials \cite{AMRV22}. The key observation is that toric Kac
polynomials can be computed recursively, using a contraction-deletion
method on arrows of $Q$. In higher depth, this contraction-deletion
argument must take into account valuations in the ring $\mathcal{O}_{\alpha}$
and the connection to Tutte polynomials is lost. Nevertheless, this
is enough to prove:

\begin{thm} \label{Thm/IntroPosToricKacPol}

Let $\rr=\underline{1}$. Then $A_{Q,\rr,\alpha}$ has non-negative
coefficients.

\end{thm}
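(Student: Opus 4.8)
The plan is to set up a contraction-deletion recursion for $A_{Q,\underline{1},\alpha}$ on the arrows of $Q$, in the spirit of the Tutte-polynomial argument of \cite{AMRV22}, but keeping track of the $t$-adic valuation data forced by working over $\mathcal{O}_{\alpha}$. First I would reduce to the case where $Q$ is connected and has no isolated vertices, since the count of locally free rank-$\underline 1$ representations is multiplicative over connected components and for a single vertex with no arrows $A_{Q,\underline 1,\alpha}$ is a monomial in $q$ (coming from $\sharp_{\FF_q}\GL(1,\mathcal O_\alpha)$-type normalisations), hence has non-negative coefficients. I would also handle loops separately: a loop at a vertex contributes an independent factor counting an element of $\mathcal O_\alpha$, i.e.\ a factor $q^{\alpha}$ after the appropriate twist, which is again manifestly positive, so we may assume $Q$ has no loops.

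The heart of the argument is a recursion on $\sharp Q_1$. Fix an arrow $a\colon i\to j$ with $i\ne j$. A locally free rank-$\underline 1$ representation of $Q$ over $\mathcal O_\alpha$ assigns to $a$ an element $x_a\in\mathcal O_\alpha$, and the relevant invariant is its valuation $\val(x_a)\in\{0,1,\dots,\alpha-1,\infty\}$. When $\val(x_a)=0$, the map along $a$ is invertible and one can use the automorphism group to gauge it to $1$, effectively \emph{contracting} $a$: the representation descends to one of $Q/a$ over $\mathcal O_\alpha$, where $Q/a$ identifies $i$ and $j$. When $\val(x_a)=\beta$ with $1\le\beta\le\alpha-1$, the map $x_a$ is divisible by $t^{\beta}$ but the representation is still required to be \emph{locally free}, which means $x_a$ cannot be gauged away but behaves, up to a unit, like $t^\beta$; stratifying by $\beta$ should express the corresponding contribution in terms of $A$-polynomials of a quiver over the smaller ring $\mathcal O_{\alpha-\beta}$ together with an explicit positive power-of-$q$ factor for the unit and the divisibility. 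When $\val(x_a)=\infty$, i.e.\ $x_a=0$, the arrow is \emph{deleted} and we get a representation of $Q\setminus a$ over $\mathcal O_\alpha$. The subtlety, exactly as in \cite{AMRV22}, is that ``absolutely indecomposable'' is not preserved verbatim under these operations; the correct bookkeeping is via the count of all locally free representations weighted by (the reciprocal of) the size of their endomorphism rings — equivalently the stacky count — using the exponential/plethystic formula of Theorem \ref{Thm/IntroExpFmlKacPol} to pass between $A_{Q,\underline 1,\alpha}$ and such weighted counts. Since $\Exp_{q,t}$ and its inverse $\Log_{q,t}$ preserve non-negativity of coefficients in the appropriate variables (the standard property of plethystic operations with the sign conventions fixed by $(1-q^{-1})$ in the denominator), it suffices to prove positivity for the stacky counts, where the contraction-deletion stratification is an honest disjoint-union decomposition and hence gives an identity with non-negative coefficients term by term.

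Carrying this out, I would argue by induction on the pair $(\sharp Q_1,\alpha)$ ordered lexicographically. The base cases $\sharp Q_1=0$ (handled above) and $\alpha=1$ (which is precisely the toric Kac-polynomial case, i.e.\ the theorem of \cite{AMRV22}) are known. For the inductive step, the valuation stratification of a chosen non-loop arrow expresses the stacky count for $(Q,\alpha)$ as a non-negative-coefficient combination of: the stacky count for $(Q/a,\alpha)$ (fewer arrows), the stacky count for $(Q\setminus a,\alpha)$ (fewer arrows), and intermediate terms involving $(Q',\alpha-\beta)$ for $1\le\beta\le\alpha-1$ with positive monomial prefactors; all of these are positive by induction. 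Applying $\Log_{q,t}$ recovers positivity of $A_{Q,\underline 1,\alpha}$ itself.

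The main obstacle I anticipate is making the valuation-$\beta$ strata precise while respecting local freeness. Over a field the contraction-deletion identity is clean because an arrow map is either zero or an isomorphism up to change of basis; over $\mathcal O_\alpha$ the intermediate valuations genuinely occur and one must check that, after choosing a unit $u$ with $x_a=t^\beta u$ and gauging $u$ to $1$, the residual moduli problem is \emph{exactly} that of locally free representations of an explicit quiver over $\mathcal O_{\alpha-\beta}$ (so that the induction on $\alpha$ closes), with the fiber dimensions of the forgetful map contributing only honest non-negative powers of $q$. Getting the quiver $Q'$ and the exponent of $q$ right in this $\beta$-stratum — in particular confirming that no cancellation is hidden in the change of the $\GL$-normalisation factor between $\mathcal O_\alpha$ and $\mathcal O_{\alpha-\beta}$ — is the delicate point; everything else is a formal consequence of the plethystic formalism and the known $\alpha=1$ case.
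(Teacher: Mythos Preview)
Your proposal contains a genuine gap. The claim that ``$\Exp_{q,t}$ and its inverse $\Log_{q,t}$ preserve non-negativity of coefficients'' is false: the plethystic logarithm involves M\"obius-type alternating sums and does \emph{not} preserve positivity. This is not a technicality --- if it were true, positivity of Kac polynomials (and of Donaldson--Thomas invariants more generally) would be trivial, since the stacky count of \emph{all} representations is manifestly a polynomial in $q$ with non-negative coefficients. The whole difficulty of Kac's conjecture is precisely that extracting $A_{Q,\dd}$ from the easy generating series introduces signs. Concretely, already for $\rr=\underline{1}$ the relation coming from Theorem~\ref{Thm/IntroExpFmlKacPol} reads
\[
q^{\alpha\langle\underline{1},\underline{1}\rangle}\cdot\frac{\sharp\mu_{Q,\underline{1},\alpha}^{-1}(0)}{\sharp\GL(\underline{1},\mathcal{O}_\alpha)}
=\sum_{Q_0=I_1\sqcup\ldots\sqcup I_s}\prod_{j=1}^s\frac{A_{Q\restriction_{I_j},\underline{1},\alpha}}{1-q^{-1}},
\]
and inverting this over partitions of $Q_0$ is an inclusion--exclusion with signs. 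So your reduction ``prove positivity for the stacky counts, then apply $\Log_{q,t}$'' does not work.

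The paper avoids the plethystic step entirely and works directly with the orbit count $A_{Q,\underline{1},\alpha}$. Rather than stratifying by the valuation of a \emph{single} chosen arrow (your intermediate-$\beta$ strata do not reduce cleanly to a quiver over $\mathcal{O}_{\alpha-\beta}$, since the remaining arrows still live in $\mathcal{O}_\alpha$), the paper fixes a total order on $Q_1$ and runs a \emph{global} contraction--deletion algorithm on $x\in R(Q,\mathcal{O}_\alpha)_{\mathrm{ind}}$: repeatedly contract the largest non-loop arrow with invertible $x_a$, then delete loops, then drop one power of $t$ from every remaining arrow and decrease $\alpha$ by one. This assigns to each $x$ a \emph{valued spanning tree} $T_x$ (the set of contracted arrows together with their valuations), giving a stratification of $R(Q,\mathcal{O}_\alpha)_{\mathrm{ind}}$ indexed by such trees. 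The orbit count on each stratum satisfies a clean recursion in $\alpha$ (Proposition~\ref{Prop/KacPolContDel}) whose output is a single monomial $A_{Q,T,\alpha}(q)=q^{n_T}$. Summing over valued spanning trees gives $A_{Q,\underline{1},\alpha}=\sum_T q^{n_T}$, which is visibly non-negative. The key idea you are missing is this coarser spanning-tree stratification: it is precisely what makes each piece a pure power of $q$ rather than a signed combination.
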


Contraction-deletion of edges is also key to our computation of $\HH_{\mathrm{c}}^{\bullet}\left(\left[\mu_{Q,\rr,\alpha}^{-1}(0)/\GL(\rr,\mathcal{O}_{\alpha})\right]\right)$,
which we now explain.

\paragraph*{Cohomology of toric preprojective stacks in higher depth}

A fruitful strategy to prove that Kac polynomials have non-negative
coefficients is categorification i.e. extracting $A_{Q,\dd}$ from
the cohomology of pure algebraic varieties (or stacks). This was done
with quiver varieties in \cite{CBVB04,HLRV18} and preprojective stacks
in \cite{Dav17a,Dav18}.

When $\rr=\underline{1}$, we show that this categorification also
works in higher depth and compute the (compactly supported) cohomology
of preprojective stacks over $\KK=\CC$. We obtain that the mixed
Hodge structure of $\HH_{\mathrm{c}}^{\bullet}\left(\left[\mu_{Q,\rr,\alpha}^{-1}(0)/\GL(\rr,\mathcal{O}_{\alpha})\right]\right)$
is pure and that its graded dimensions can be computed from $A_{Q,\rr',\alpha},\ \rr'\leq\underline{1}$.
This is a cohomological upgrade of Theorems \ref{Thm/IntroExpFmlKacPol}
and \ref{Thm/IntroPosToricKacPol}. Our proof relies on a stratification
of $\left[\mu_{Q,\rr,\alpha}^{-1}(t^{\alpha-1}\cdot\lambda)/\GL(\rr,\mathcal{O}_{\alpha})\right]$,
based on the contraction-deletion algorithm mentioned above, which
we further extend to $\left[\mu_{Q,\rr,\alpha}^{-1}(0)/\GL(\rr,\mathcal{O}_{\alpha})\right]$.
This is the content of the following theorems (see Section \ref{Sect/EqCoh}
for notations):

\begin{thm} \label{Thm/IntroCohMomMapGenFib}

Let $\rr=\underline{1}$. then:\[
\HH_{\mathrm{c}}^{\bullet}\left(\left[\mu_{Q,\rr,\alpha}^{-1}(t^{\alpha-1}\cdot\lambda)/\GL(\rr,\mathcal{O}_{\alpha})\right]\right)
\simeq
A_{Q,\rr,\alpha}(\mathbb{L})\otimes\mathbb{L}^{1-\alpha\langle\rr,\rr\rangle}\otimes\HH_{\mathrm{c}}^{\bullet}\left(\mathrm{B}\mathbb{G}_{\mathrm{m}}\right)
\]In particular, $\HH_{\mathrm{c}}^{\bullet}\left(\left[\mu_{Q,\rr,\alpha}^{-1}(t^{\alpha-1}\cdot\lambda)/\GL(\rr,\mathcal{O}_{\alpha})\right]\right)$
carries a pure Hodge structure.

\end{thm}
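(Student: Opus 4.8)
The plan is to compute $\HH_{\mathrm{c}}^{\bullet}$ of the quotient stack $\left[\mu_{Q,\underline{1},\alpha}^{-1}(t^{\alpha-1}\cdot\lambda)/\GL(\underline{1},\mathcal{O}_{\alpha})\right]$ by building a stratification of $\mu_{Q,\underline{1},\alpha}^{-1}(t^{\alpha-1}\cdot\lambda)$ that mirrors the contraction-deletion recursion used to prove Theorem \ref{Thm/IntroPosToricKacPol}, and then showing that each stratum contributes a pure Tate class. First I would unwind the moment map equation: for $\rr=\underline{1}$, a point of $\mu_{Q,\underline{1},\alpha}^{-1}$ is a collection of scalars $(x_a, y_a)_{a\in Q_1}$ in $\mathcal{O}_{\alpha}$ attached to the doubled quiver, and the moment map condition at vertex $i$ reads $\sum_{h(a)=i} x_a y_a - \sum_{t(a)=i} x_a y_a = (t^{\alpha-1}\lambda)_i$. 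Picking an arrow $a$, one stratifies according to whether $x_a$ (or $y_a$) is a unit in $\mathcal{O}_{\alpha}$ or lies in $t^k\mathcal{O}_{\alpha}^{\times}\cup\{0\}$: on the locus where $x_a$ is a unit one can solve the corresponding equation and eliminate a variable — this is the analogue of \emph{contraction} of $a$ — while on the complementary locus one is looking at a fibre of a moment map for the quiver $Q$ with $a$ \emph{deleted}, twisted by the valuation data. The genericity of $\lambda$ with respect to $\underline{1}$ guarantees that the relevant sub-quiver counts behave correctly (no spurious vanishing), exactly as in Theorem \ref{Thm/IntroCountMomMapGenFib}.

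Next I would carry out this stratification equivariantly for the $\GL(\underline{1},\mathcal{O}_{\alpha})$-action and take compactly supported cohomology. Since $\GL(\underline{1},\mathcal{O}_{\alpha}) = \prod_{i\in Q_0} \mathcal{O}_{\alpha}^{\times}$ and $\mathcal{O}_{\alpha}^{\times}\cong\mathbb{G}_{\mathrm{m}}\times\mathbb{G}_{\mathrm{a}}^{\alpha-1}$ up to an affine-space factor that acts freely and contributes only a shift, the classifying-stack contribution factors as $\HH_{\mathrm{c}}^{\bullet}(\mathrm{B}\mathbb{G}_{\mathrm{m}})$ tensored with a Tate shift, which is where the $\HH_{\mathrm{c}}^{\bullet}(\mathrm{B}\mathbb{G}_{\mathrm{m}})$ factor in the statement comes from. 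The long exact sequence for compactly supported cohomology of the open/closed decomposition, combined with an inductive hypothesis on $|Q_1|$ (and on $\alpha$, with the base case $\alpha=1$ being Crawley-Boevey--Van den Bergh's toric case, i.e. \cite{CBVB04} as used in \cite{HLRV18}), then expresses $\HH_{\mathrm{c}}^{\bullet}$ as a sum of Tate twists of lower terms. The numerical shadow of this recursion is precisely Wyss' contraction-deletion recursion for $A_{Q,\underline{1},\alpha}$, so by induction the generating polynomial of $\HH_{\mathrm{c}}^{\bullet}$ in $\mathbb{L}$ equals $A_{Q,\underline{1},\alpha}(\mathbb{L})\otimes\mathbb{L}^{1-\alpha\langle\underline{1},\underline{1}\rangle}\otimes\HH_{\mathrm{c}}^{\bullet}(\mathrm{B}\mathbb{G}_{\mathrm{m}})$, matching the point count from Theorem \ref{Thm/IntroCountMomMapGenFib} via the trace-of-Frobenius dictionary.

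Purity then follows formally: each stratum is (up to the $\mathrm{B}\mathbb{G}_{\mathrm{m}}$ and affine factors) an iterated fibration of tori and affine spaces — a very affine variety whose cohomology is of Tate type and pure — and the weight filtration is strictly controlled by the Tate shifts appearing in the recursion, so no cancellation of weights across the long exact sequences can occur. More carefully, one checks that the connecting maps in the compactly supported long exact sequences vanish for weight reasons (the open and closed pieces live in disjoint weight ranges after the shifts), so the sequences split and purity is inherited at each step. The last sentence of the theorem is then immediate from the explicit form of the right-hand side, since $A_{Q,\underline{1},\alpha}(\mathbb{L})$ is a polynomial in $\mathbb{L}$ with the claimed Tate structure and $\HH_{\mathrm{c}}^{\bullet}(\mathrm{B}\mathbb{G}_{\mathrm{m}})$ is pure.

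The main obstacle I expect is controlling the stratification by valuation \emph{uniformly in} $\alpha$: when an arrow scalar $x_a$ has valuation $k$ with $0<k<\alpha$, eliminating variables from the moment map equation introduces denominators and forces one to track how the residual equations descend to $\mathcal{O}_{\alpha-k}$-type data, and one must verify that the induced pieces are again (twisted) jet spaces of preprojective stacks for sub-quivers so that the induction closes. Handling the boundary case $x_a = 0$ versus $x_a$ of positive valuation requires care so that the open/closed pieces genuinely reassemble the deletion quiver's fibre with the correct genericity datum; checking that the relevant $\lambda$-restrictions remain generic for all the sub-quivers that arise — and that the strata are smooth of the expected dimension — is the technical heart of the argument.
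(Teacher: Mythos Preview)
Your proposal is essentially correct and follows the same overall strategy as the paper: stratify using a contraction--deletion scheme, show each stratum has pure Tate-type cohomology, and conclude by splitting the long exact sequences via weight arguments. A few differences in organization are worth noting, and your final ``obstacle'' paragraph contains one misplaced worry.

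First, the paper does not stratify arrow-by-arrow. Instead it pulls back (along the projection $(x,y)\mapsto x$) a global stratification of $R(Q,\mathcal{O}_{\alpha})_{\mathrm{ind}}$ indexed by \emph{valued spanning trees} $T$ of $Q$ (the same combinatorics used in the proof of Theorem~\ref{Thm/IntroPosToricKacPol}). This packaging avoids having to verify at each step that the residual pieces are again moment-map fibres for smaller quivers; rather, one computes directly that each stratum $\mu_{Q,\alpha}^{-1}(t^{\alpha-1}\lambda)_T$ is isomorphic, up to explicit affine-space and induced-group factors, to a stratum for a contracted quiver at depth $\alpha-1$ (Proposition~\ref{Prop/CohStrat}). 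The recursion bottoms out at $\HH_{\mathrm{c}}^{\bullet}([\mathrm{pt}/\mathcal{O}_{\alpha'}^{\times}])$, which is pure.

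Second, the paper does not match the recursion term-by-term against a recursion for $A_{Q,\underline{1},\alpha}$. Instead, once purity of Tate type is established for every stratum (hence for the whole stack via Lemma~\ref{Lem/Strat}), the isomorphism with $A_{Q,\underline{1},\alpha}(\mathbb{L})\otimes\mathbb{L}^{1-\alpha\langle\underline{1},\underline{1}\rangle}\otimes\HH_{\mathrm{c}}^{\bullet}(\mathrm{B}\mathbb{G}_{\mathrm{m}})$ follows immediately from the E-series identity, i.e.\ from the point count of Theorem~\ref{Thm/IntroCountMomMapGenFib} together with Proposition~\ref{Prop/E-seriesVSCountF_q}. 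This two-step argument (purity first, then identify via E-series) is cleaner than tracking the combinatorics directly.

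Finally, your concern about preserving genericity of $\lambda$ under restriction to sub-quivers is unnecessary: the paper observes explicitly that $\lambda$ plays no role whatsoever in the cohomology computation of individual strata (indeed, the same computation with $\lambda=0$ yields Theorem~\ref{Thm/IntroCohIntgr}). Genericity of $\lambda$ enters only once, through Theorem~\ref{Thm/IntroCountMomMapGenFib}, to identify the E-series of the total stack with $A_{Q,\underline{1},\alpha}$. Also, your phrasing ``disjoint weight ranges'' for why connecting maps vanish is slightly off: the point is simply that both pieces are pure of weight equal to cohomological degree, so a morphism of mixed Hodge structures $\HH_{\mathrm{c}}^{j-1}\to\HH_{\mathrm{c}}^{j}$ goes between pure structures of different weights and must vanish.
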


\begin{thm} \label{Thm/IntroCohIntgr}

Let $\rr=\underline{1}$. Then:\[
\HH_{\mathrm{c}}^{\bullet}\left(\left[\mu_{Q,\rr,\alpha}^{-1}(0)/\GL(\rr,\mathcal{O}_{\alpha})\right]\right)
\otimes\mathbb{L}^{\otimes\alpha\langle\rr,\rr\rangle}
\simeq
\bigoplus_{Q_0=I_1\sqcup\ldots\sqcup I_s}
\bigotimes_{j=1}^s
\left(
A_{Q\restriction_{I_j},\rr\restriction_{I_j},\alpha}(\mathbb{L})\otimes\mathbb{L}\otimes \HH_{\mathrm{c}}^{\bullet}(\mathrm{B}\mathbb{G}_m)
\right).
\]In particular, $\HH_{\mathrm{c}}^{\bullet}\left(\left[\mu_{Q,\rr,\alpha}^{-1}(0)/\GL(\rr,\mathcal{O}_{\alpha})\right]\right)$
carries a pure Hodge structure.

\end{thm}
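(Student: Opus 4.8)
The plan is to bootstrap from Theorem \ref{Thm/IntroCohMomMapGenFib}, which computes the cohomology of the generic fiber, to the central fiber $\mu_{Q,\rr,\alpha}^{-1}(0)$ by means of a stratification reflecting the Jordan–Hölder type of a toric representation. First I would observe that a point of $\mu_{Q,\rr,\alpha}^{-1}(0)$ with $\rr=\underline{1}$, being a $\GL(\rr,\mathcal{O}_\alpha)$-orbit of tuples of units and nilpotents along the arrows, decomposes canonically according to the support pattern of its arrow-maps modulo $t$; more precisely, the locus where the underlying rank-$\underline 1$ representation over $\FF_q$ is semisimple with connected components supported on a fixed ordered partition $Q_0 = I_1 \sqcup \cdots \sqcup I_s$ of the vertex set gives a locally closed stratification of the stack. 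On each stratum the moment-map condition at depth $\alpha$ decouples into the corresponding conditions for the restricted quivers $Q\restriction_{I_j}$, twisted by the off-diagonal arrows between blocks, which contribute only affine-space factors because the relevant linear maps are forced to be nilpotent in a prescribed way. This is exactly the combinatorial shape of the contraction-deletion stratification already used (per the excerpt) to extend the argument from the generic fiber to the central fiber.

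Next I would run the stratification spectral sequence (or rather, exploit the fact that each stratum's cohomology is pure and Tate) to assemble $\HH_{\mathrm c}^\bullet$ of the central fiber from the pieces. Purity is the load-bearing ingredient: each stratum, after the computation, has compactly supported cohomology that is a (shifted, twisted) tensor power of $\HH_{\mathrm c}^\bullet(\mathrm B\mathbb G_{\mathrm m})$ with Tate coefficients $A_{Q\restriction_{I_j},\rr\restriction_{I_j},\alpha}(\mathbb L)$, by Theorem \ref{Thm/IntroCohMomMapGenFib} applied block-by-block (note $\rr\restriction_{I_j} = \underline 1$, and one checks $\lambda$ can be taken generic on each block, or that the value $0$ is reached on each block with the same count). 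Since all strata are pure of the expected weights, the long exact sequences in compactly supported cohomology degenerate, the extension is formal, and one gets a direct sum decomposition indexed by ordered partitions $Q_0 = I_1\sqcup\cdots\sqcup I_s$, matching the right-hand side of the claimed identity after the twist by $\mathbb L^{\otimes\alpha\langle\rr,\rr\rangle}$. The stabilizer bookkeeping — each $\mathbb G_{\mathrm m}$ acting on a block contributes one $\HH_{\mathrm c}^\bullet(\mathrm B\mathbb G_{\mathrm m})$ factor and one $\mathbb L$ shift — is what produces the $\bigotimes_{j=1}^s(\,\cdot\,\otimes\mathbb L\otimes\HH_{\mathrm c}^\bullet(\mathrm B\mathbb G_{\mathrm m}))$ form.

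Finally I would verify consistency: taking $\mathbb E$-polynomials (or point counts over $\FF_q$) on both sides must reproduce the identity underlying Theorem \ref{Thm/IntroExpFmlKacPol} specialized to $\rr=\underline 1$, i.e. the plethystic exponential over ordered set partitions; this is a sanity check that the combinatorial indexing and all the twists are correct, and it pins down the normalization. The last assertion, purity of $\HH_{\mathrm c}^\bullet([\mu_{Q,\rr,\alpha}^{-1}(0)/\GL(\rr,\mathcal O_\alpha)])$, then follows since a direct sum of pure (Tate) Hodge structures is pure.

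The main obstacle I anticipate is controlling the stratum geometry precisely enough — showing that the off-block arrow data really does contribute only affine-space bundles and that the moment-map equations at depth $\alpha$ split cleanly along the partition, without cross-terms that would spoil purity or introduce non-Tate classes. This requires a careful analysis of the $\GL(\rr,\mathcal O_\alpha)$-action on each stratum (in particular identifying the stabilizer as a product of $\mathbb G_{\mathrm m}$'s times a unipotent group with affine-space quotient) and an inductive use of the contraction-deletion algorithm to reduce the within-block contributions to the already-established generic-fiber computation. Once the stratification is set up correctly, the cohomological assembly is essentially forced by purity.
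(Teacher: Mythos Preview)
Your outline is close to the paper's argument, but there is a real gap at the step where you invoke Theorem \ref{Thm/IntroCohMomMapGenFib} ``block-by-block''. On the stratum indexed by a partition $Q_0=I_1\sqcup\cdots\sqcup I_s$, the within-block contribution is the \emph{indecomposable locus of the central fibre} $\mu_{Q\restriction_{I_j},\alpha}^{-1}(0)$, not a generic fibre $\mu_{Q\restriction_{I_j},\alpha}^{-1}(t^{\alpha-1}\lambda')$; these are different varieties, and Theorem \ref{Thm/IntroCohMomMapGenFib} says nothing about the former. Your parenthetical ``one checks $\lambda$ can be taken generic on each block, or that the value $0$ is reached on each block with the same count'' is exactly the missing content, and it is not automatic. (A minor related point: the correct partition records which $x_a$ are \emph{identically} zero in $\mathcal{O}_\alpha$, not zero modulo $t$ as your phrase ``support pattern modulo $t$'' suggests; with the mod-$t$ version the moment map does not decouple.)

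The paper does not close this gap by comparing the two fibres. Instead it refines the stratification: the central fibre is stratified by pairs $(I,T)$ with $I$ a partition of $Q_0$ and $T$ a tuple of valued spanning trees of the $Q\restriction_{I_j}$, and the recursive computation of each stratum's cohomology (Proposition \ref{Prop/CohStrat}) is visibly independent of $\lambda$. So every $(I,T)$-stratum of $\mu^{-1}(0)$ is pure and of Tate type by the \emph{same} calculation already performed for the generic fibre, and Lemma \ref{Lem/Strat} assembles these into purity of the whole stack. Finally --- and this is the mechanism, not a sanity check --- once both sides are known to be pure and of Tate type, they are determined by their $E$-series, and the required equality of $E$-series is precisely Theorem \ref{Thm/ExpFmlKacPol} specialised to $\rr\le\underline{1}$ (see Remark \ref{Rmk/CohUpgrade=000026Positivity}). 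Note in particular that even making sense of $A_{Q\restriction_{I_j},\rr\restriction_{I_j},\alpha}(\mathbb{L})$ on the right-hand side already uses Theorem \ref{Thm/IntroPosToricKacPol}.
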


Theorem \ref{Thm/IntroCohIntgr} can be seen as an analog in higher
depth of the PBW isomorphism for preprojective cohomological Hall
algebras \cite{Dav17a}, restricted to $\rr=\underline{1}$. This
is a key ingredient in Davison's reproof of Kac's positivity conjecture\footnote{However, in our setting, Theorem \ref{Thm/IntroPosToricKacPol} is
not a consequence of Theorem \ref{Thm/IntroCohIntgr}, see Remark
\ref{Rmk/CohUpgrade=000026Positivity}.} \cite{Dav18}. We take this result as a first piece of evidence that
similar structures may exist in higher depth, which we hope to adress
in future works.

\paragraph*{Beyond the toric case}

As mentioned above, beyond the case where $\rr=\underline{1}$, the
counting functions $A_{Q,\rr,\alpha}$ largely unknown. One reason
for this is that Kac and Stanley's computations of $A_{Q,\dd}$ \cite{Kac83}
(see also \cite{Hua00}) rely on a good understanding of conjugacy
classes in $\GL(\dd)$. To the best of our knowledge, conjugacy classes
of $\GL(r,\mathcal{O}_{\alpha})$ are not classified for $r\geq4$.
When $r\leq3$, we exploit results of Avni, Klopsch, Onn and Voll
\cite{AKOV16} to prove the following explicit formulas:

\begin{prop} \label{Prop/IntroKacPolLowRk}

Let $Q$ be the quiver with one vertex and $g\geq1$ loops. Then:\[
\begin{split}
A_{Q,2,\alpha}= & \frac{q^{2\alpha g-1}(q^{2g}-1)(q^{\alpha(2g-3)}-1)}{(q^2-1)(q^{2g-3}-1)}, \\
A_{Q,3,\alpha}= &
\frac{q^{3\alpha g-2}(q^{2g}-1)(q^{2g-1}-1)}{(q^2-1)(q^3-1)(q^{2g-3}-1)(q^{6g-8}-1)(q^{4g-5}-1)} \\
 & \cdot
\left(
q^{\alpha(6g-8)-1}(q^{6g-7}-1)(q^{2g}+1)
-q^{\alpha(6g-8)+2g-4}(q^2-1)(q^{4g-3}+1) \right. \\
 & \left. +q^{\alpha(2g-3)-1}(q^2+q+1)(q^{2g-1}-1)(q^{6g-8}-1)+(q+1)(q^{8g-10}-1)+q^{2g-4}(q^4+1)(q^{4g-5}-1)
\right)
.
\end{split}
\]In particular, $A_{Q,2,\alpha}$ and $A_{Q,3,\alpha}$ are polynomials
and $A_{Q,2,\alpha}$ has non-negative coefficients.

\end{prop}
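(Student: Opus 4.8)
The plan is to adapt the conjugacy-class computation of $A_{Q,\dd}$ due to Kac and Stanley (\cite{Kac83}, see also \cite{Hua00}) to the ring $\mathcal{O}_{\alpha}$. For the one-vertex quiver the only ingredient that is not elementary is the classification of the conjugacy classes of $\GL(r,\mathcal{O}_{\alpha})$ and the orders of their centralisers; this is exactly what \cite{AKOV16} provides for $r\leq 3$, and its unavailability for $r\geq 4$ is what confines the statement to small rank. First I would reduce $A_{Q,r,\alpha}$ to a count of \emph{all} locally free representations. A locally free rank-$r$ representation of $Q$ over $\mathcal{O}_{\alpha}$ is simply a $g$-tuple $(X_{1},\dots,X_{g})\in\Mat(r,\mathcal{O}_{\alpha})^{g}$, two such being isomorphic exactly when they are simultaneously conjugate under $\GL(r,\mathcal{O}_{\alpha})$; write $N_{r}(q)$ for the number of isomorphism classes. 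Since locally free representations over $\mathcal{O}_{\alpha}$ form a Krull--Schmidt category, combining the unique decomposition into indecomposables with Galois descent along the extensions $\mathcal{O}_{\alpha}\otimes_{\FF_{q}}\FF_{q^{n}}$ — exactly as in Kac's original argument — expresses $A_{Q,r,\alpha}$ through $N_{1},\dots,N_{r}$. For $r\leq 3$, using $A_{Q,1,\alpha}(q)=q^{\alpha g}$ (a rank-$1$ representation is a $g$-tuple of scalars) and the ensuing identity $A_{Q,1,\alpha}(q^{n})=A_{Q,1,\alpha}(q)^{n}$, this collapses to
\[
A_{Q,2,\alpha}=N_{2}-q^{2\alpha g},\qquad A_{Q,3,\alpha}=N_{3}-q^{\alpha g}\,A_{Q,2,\alpha}-q^{3\alpha g};
\]
the only subtlety, occurring for $r=3$, is that an indecomposable but not absolutely indecomposable rank-$3$ representation is necessarily the Weil restriction of a rank-$1$ representation over $\FF_{q^{3}}$, which uses that $3$ is prime.

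Next I would compute $N_{r}(q)$ by Burnside's lemma, applied to $\GL(r,\mathcal{O}_{\alpha})$ acting on $\Mat(r,\mathcal{O}_{\alpha})^{g}$ by simultaneous conjugation:
\[
N_{r}(q)=\sum_{[\phi]}\frac{\bigl(\sharp_{\FF_{q}}C_{\Mat(r,\mathcal{O}_{\alpha})}(\phi)\bigr)^{g}}{\sharp_{\FF_{q}}C_{\GL(r,\mathcal{O}_{\alpha})}(\phi)},
\]
where $[\phi]$ runs over the conjugacy classes of $\GL(r,\mathcal{O}_{\alpha})$, $C_{\Mat(r,\mathcal{O}_{\alpha})}(\phi)$ is the centraliser algebra of $\phi$ and $C_{\GL(r,\mathcal{O}_{\alpha})}(\phi)$ is its group of units. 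For $r=2$ and $r=3$ the classification of the $[\phi]$ together with these centraliser orders is precisely the content of \cite{AKOV16}; substituting the data turns $N_{r}(q)$ into an explicit finite sum of geometric series in $q$ and $q^{\alpha}$.

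Then I would carry out the summation for $r=2,3$, subtract as above, and simplify. Polynomiality of the result is visible from the factored form, in which each denominator divides the matching factor of the numerator — for instance $(q^{2g}-1)/(q^{2}-1)$ and $(q^{\alpha(2g-3)}-1)/(q^{2g-3}-1)$ both lie in $\ZZ[q]$. Positivity of $A_{Q,2,\alpha}$ then follows by inspection: for $g\geq 2$ one has $2g-3\geq 1$ and
\[
A_{Q,2,\alpha}=q^{2\alpha g-1}\,\bigl(1+q^{2}+\cdots+q^{2g-2}\bigr)\,\bigl(1+q^{2g-3}+q^{2(2g-3)}+\cdots+q^{(\alpha-1)(2g-3)}\bigr),
\]
a product of polynomials with non-negative coefficients, while the case $g=1$ is checked directly, where $A_{Q,2,\alpha}=q^{\alpha}+q^{\alpha+1}+\cdots+q^{2\alpha-1}$.

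The hard part is the summation for $r=3$: the list of conjugacy-class types of $\GL(3,\mathcal{O}_{\alpha})$ is long, and assembling the contributions $\bigl(\sharp_{\FF_{q}}C_{\Mat}\bigr)^{g}/\sharp_{\FF_{q}}C_{\GL}$ — while keeping track of the $t$-adic parameters attached to each type and of the $g$-th powers — into the compact rational fraction displayed in the proposition is a delicate bookkeeping exercise, and carrying it out for a general number $g$ of loops rather than a fixed small value is where most of the friction lies.
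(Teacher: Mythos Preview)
Your proposal is correct and follows the same overall strategy as the paper: reduce $A_{Q,r,\alpha}$ to the count $N_r=M_{Q,r,\alpha}$ of all isomorphism classes via Krull--Schmidt and Galois descent (your formulas $A_{Q,2,\alpha}=N_2-q^{2\alpha g}$ and $A_{Q,3,\alpha}=N_3-q^{\alpha g}A_{Q,2,\alpha}-q^{3\alpha g}$ are exactly what falls out of the plethystic identity $\sum_r M_r t^r=\Exp_{q,t}(\sum_r A_r t^r)$ once one uses $A_{Q,1,\alpha}(q^n)=A_{Q,1,\alpha}(q)^n$), then evaluate $N_r$ by Burnside using the conjugacy-class data of \cite{AKOV16}.

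The one genuine difference is in how the Burnside sum is evaluated as a function of $\alpha$. You propose to sum directly over all conjugacy classes, tracking the $t$-adic depth parameter $j$ attached to each type and collapsing the result into geometric series in $q^\alpha$. The paper instead organises the same sum \emph{recursively}: it partitions $\mathrm{Cl}(r,\mathcal{O}_\alpha)$ into types $\sigma$, sets $S_{\sigma,\alpha}=\sum_{[\gamma]\in\mathrm{Cl}_\sigma}\sharp R^{\gamma}/\sharp Z(\gamma)$, and uses the branching rules for reduction $\GL(r,\mathcal{O}_{\alpha+1})\to\GL(r,\mathcal{O}_\alpha)$ from \cite{AOPV09,AKOV16} to derive a linear recursion $S_{\bullet,\alpha+1}=M\cdot S_{\bullet,\alpha}$ with an explicit lower-triangular transition matrix ($4\times4$ for $r=2$, $10\times10$ for $r=3$). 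Diagonalising $M$ then yields the closed form in $\alpha$ directly. The paper in fact remarks that your direct approach is possible but opts for the recursion; the advantage of the recursive method is that the $\alpha$-dependence is produced mechanically by the eigenvalues of $M$, which for $r=3$ tames exactly the bookkeeping you flag as the hard part.
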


Although it is not clear from the formula above that $A_{Q,3,\alpha}\in\ZZ_{\geq0}[q]$,
we check with a computer that this is true for small values of $\alpha$
and $g$. This provides some evidence that $A_{Q,\rr,\alpha}$ is
a polynomial with non-negative coefficients for all $\rr\in\ZZ_{\geq0}^{Q_{0}}$
and $\alpha\geq1$. We record this in the following:

\begin{cj}

Let $Q$ be a quiver, $\rr\in\ZZ_{\geq0}^{Q_{0}}$ and $\alpha\geq1$.
Then $A_{Q,\rr,\alpha}\in\ZZ_{\geq0}[q]$ .

\end{cj}

\paragraph*{Plan of the paper}

In Section \ref{Sect/QuiverRep}, we introduce basic facts on locally
free representations of quivers in higher depth, generalising well-known
results over base fields, such as Krull-Schmidt decomposition and
Galois descent techniques. We then set notations concerning plethystic
formulas and graphs in Sections \ref{Sect/Plethysm} and \ref{Sect/Graph}.
In Section \ref{Sect/SRrings}, we collect relevant facts on Stanley-Reisner
rings and their Hilbert series, used in the proof of Theorem \ref{Thm/IntroPositivityA}.
Our last prerequisites concern mixed Hodge structure on equivariant
cohomology of algebraic varieties and their relation to point-counting
over finite fields. We also collect computational tools for the proof
of Theorem \ref{Thm/IntroCohIntgr}. This is done in Section \ref{Sect/EqCoh}.

We prove Theorems \ref{Thm/IntroExpFmlKacPol}, \ref{Thm/IntroCountMomMapGenFib}
in Section \ref{Sect/KacPolvsMomMap}, as well as Corollary \ref{Cor/IntroRelAvsB}.
Section \ref{Section/PosPurity} is devoted to the proof of our positivity
and purity results in the toric setting. Theorem \ref{Thm/IntroPositivityA}
is proved in Section \ref{Sect/PosA}, Theorem \ref{Thm/IntroPosToricKacPol}
in Section \ref{Sect/PosToricKacPol}, whereas Section \ref{Sect/CohPreprojStack}
contains the proofs of Theorems \ref{Thm/IntroCohMomMapGenFib} and
\ref{Thm/IntroCohIntgr}. Finally, we discuss higher-rank counts and
Proposition \ref{Prop/IntroKacPolLowRk} in Section \ref{Sect/HigherRk}.

\paragraph*{Acknowledgements}

I would like to warmly thank Dimitri Wyss for his constant support
throughout this project and Emmanuel Letellier for helpful discussions
and suggesting to look into \cite{AMRV22}. I would also like to thank
Ben Davison, Tam\'as Hausel, Anton Mellit, Olivier Schiffmann and
Sebastian Schlegel-Mejia for helpful conversations. This work was
supported by the Swiss National Science Foundation {[}No. 196960{]}.

\section{Preliminaries}

\subsection{Quiver representations in higher depth \label{Sect/QuiverRep}}

In this section, we set notations for locally free representations
of quivers over a base ring (typically $\mathbb{F}_{q}[t]/(t^{\alpha})$)
and their moduli. We also recall and adapt to our setting a few well-known
results on endomorphism rings of quiver representations and their
relation with quiver moment maps.

\paragraph*{Locally free representations of quivers}

A quiver is the datum $Q=(Q_{0},Q_{1},s,t)$ of a set of vertices
$Q_{0}$, a set of arrows $Q_{1}$ and maps $s,t:Q_{1}\rightarrow Q_{0}$
which assign to an arrow $a\in Q_{1}$ its source $s(a)$ and target
$t(a)$ respectively. A quiver may have parallel edges and loops.

Consider a base ring $R$. A representation $M$ of $Q$ over $R$
is the datum of $R$-modules $M_{i},\ i\in Q_{0}$ and $R$-linear
maps $f_{a}:M_{s(a)}\rightarrow M_{t(a)},\ a\in Q_{1}$. A morphism
of representations $\varphi:M\rightarrow M'$ is the datum of $R$-linear
maps $\varphi_{i}:M_{i}\rightarrow M'_{i}$ such that $f'_{a}\circ\varphi_{s(a)}=\varphi_{t(a)}\circ f_{a}$
for all $a\in Q_{1}$.

A representation $M$ is called locally free if all $M_{i},\ i\in Q_{0}$
are free $R$-modules. If $M_{i}$ is finitely generated for every
$i\in Q_{0}$, we call $\rr:=(\rk(M_{i}))_{i\in Q_{0}}$ the rank
vector of $M$. In this paper, we will always work with locally free
representations of finite rank.

Finally, we recall the definition of the Euler form of $Q$:\[
\begin{array}{cccc}
\langle\bullet,\bullet\rangle: & \ZZ^{Q_0}\times\ZZ^{Q_0} & \rightarrow & \ZZ^{Q_0} \\
& (\dd,\ee) & \mapsto & \sum_{i\in Q_0}d_ie_i-\sum_{a:i\rightarrow j}d_ie_j.
\end{array}
\]

\paragraph*{Moduli of quiver representations in higher depth}

Let us now set a quiver $Q$ and $R=\mathcal{O}_{\alpha}:=\KK[t]/(t^{\alpha})$,
where $\alpha\geq1$ and $\KK$ is a field. Similarly to the case
where the base ring is a field (i.e. $\alpha=1$), locally free representations
of quivers of a given rank vector are parametrised by a quotient stack
(see \cite[\S 3]{Rei08a} for the corresponding group action). Let
us define:\[
R(Q,\rr,\mathcal{O}_{\alpha}):=
\prod_{\substack{a\in Q_1 \\ a:i\rightarrow j}}
\Hom_{\mathcal{O}_{\alpha}}(\mathcal{O}_{\alpha}^{\oplus r_i},\mathcal{O}_{\alpha}^{\oplus r_j}),
\]
\[
\GL(\rr,\mathcal{O}_{\alpha}):=\prod_{i\in Q_0}\GL(r_i,\mathcal{O}_{\alpha}),
\] which we view respectively as an algebraic variety and an algebraic
group over $\KK$. Then $\GL(\rr,\mathcal{O}_{\alpha})$ acts on $R(Q,\rr,\mathcal{O}_{\alpha})$
similarly to the case where $\alpha=1$ and the quotient stack $\left[R(Q,\rr,\mathcal{O}_{\alpha})/\GL(\rr,\mathcal{O}_{\alpha})\right]$
parametrises locally free representations of $Q$ of rank vector $\rr$.

\begin{rmk}

One can easily check that $R(Q,\rr,\mathcal{O}_{\alpha})$ and $\GL(\rr,\mathcal{O}_{\alpha})$
are the $\alpha$-th jet spaces of $R(Q,\rr,\KK)$ and $\GL(\rr,\KK)$
respectively. The above action can be obtained by extending to jet
spaces the classical action of $\GL(\rr,\KK)$ on $R(Q,\rr,\KK)$
(the construction of jet spaces is functorial - see \cite[Ch. 3]{CLNS18}).

\end{rmk}

\paragraph*{Moment map}

The action of $\GL(\rr,\mathcal{O}_{\alpha})$ on $R(Q,\rr,\mathcal{O}_{\alpha})$
also extends to its cotangent bundle and gives rise to a moment map
$\mu_{Q,\rr,\alpha}:R(\overline{Q},\rr,\mathcal{O}_{\alpha})\rightarrow\mathfrak{gl}(\rr,\mathcal{O}_{\alpha})$,
where $\overline{Q}$ is the double quiver of $Q$. Let us first clarify
how $R(\overline{Q},\rr,\mathcal{O}_{\alpha})$ is identified to $\mathrm{T}^{*}R(Q,\rr,\mathcal{O}_{\alpha})$
- see also \cite[\S 6.1.]{HWW18}.

The ring $\mathcal{O}_{\alpha}$ admits the following non-degenerate,
$\KK$-linear pairing. For $a(t),b(t)\in\mathcal{O}_{\alpha}$, define:\[
\left( a(t)=\sum_ka_k\cdot t^k \ ; \ b(t)=\sum_kb_k\cdot t^k \right) \mapsto a(t)* b(t):=\sum_ka_kb_{\alpha-1-k}.
\] $a(t)*b(t)$ can be seen as the trace of the $\KK$-linear map $\pi\circ(a(t)\cdot b(t))\circ\iota$,
where $a(t)\cdot b(t)$ stands for the multiplication map, while $\iota:\KK\hookrightarrow\mathcal{O}_{\alpha}$
and $\pi:\mathcal{O}_{\alpha}\rightarrow\KK$ are defined respectively
by $\iota(1)=1$ and $\pi(t^{k})=\delta_{k,\alpha-1}$.

Generalizing this, we can then identify the dual $\KK$-vector space
$\Hom_{\mathcal{O}_{\alpha}}(\mathcal{O}_{\alpha}^{\oplus n},\mathcal{O}_{\alpha}^{\oplus m})^{\vee}$
to $\Hom_{\mathcal{O}_{\alpha}}(\mathcal{O}_{\alpha}^{\oplus m},\mathcal{O}_{\alpha}^{\oplus n})$
using the following trace pairing. Let $\iota:\KK^{\oplus n}\hookrightarrow\mathcal{O}_{\alpha}^{\oplus n}$
and $\pi:\mathcal{O}_{\alpha}^{\oplus n}\twoheadrightarrow\KK^{\oplus n}$
be $\KK$-linear maps which induce isomorphisms of $\mathcal{O}_{\alpha}$-modules
$\KK^{\oplus n}\otimes_{\KK}\mathcal{O}_{\alpha}\simeq\mathcal{O}_{\alpha}^{\oplus n}$
and $t^{\alpha-1}\mathcal{O}_{\alpha}^{\oplus n}\hookrightarrow\mathcal{O}_{\alpha}^{\oplus n}\twoheadrightarrow\KK^{\oplus n}$.
Then define:\[
\begin{array}{ccccc}
\Hom_{\mathcal{O}_{\alpha}}(\mathcal{O}_{\alpha}^{\oplus n},\mathcal{O}_{\alpha}^{\oplus m})
& \times &
\Hom_{\mathcal{O}_{\alpha}}(\mathcal{O}_{\alpha}^{\oplus m},\mathcal{O}_{\alpha}^{\oplus n})
& \rightarrow & \KK \\
A & ; & B & \mapsto & A*B:=\Tr\left(\pi\circ A\circ B\circ\iota\right)
\end{array}
.
\] This can be shown to be independent from the choice of $\iota$ and
$\pi$. More concretely, seeing $A$ (resp. $B$) as an $m\times n$
(resp. $n\times m$) matrix $A(t)$ (resp. $B(t)$) with coefficients
in $\mathcal{O}_{\alpha}$, $A*B$ is the coefficient of $t^{\alpha-1}$
in $\Tr(A(t)\times B(t))$.

Using the trace pairing, we obtain the following identification: \[
\begin{split}
\mathrm{T}^*R(Q,\rr,\mathcal{O}_{\alpha})
& =
\prod_{\substack{a\in Q_1 \\ a:i\rightarrow j}}
\left(
\Hom_{\mathcal{O}_{\alpha}}(\mathcal{O}_{\alpha}^{\oplus r_i},\mathcal{O}_{\alpha}^{\oplus r_j})
\times
\Hom_{\mathcal{O}_{\alpha}}(\mathcal{O}_{\alpha}^{\oplus r_i},\mathcal{O}_{\alpha}^{\oplus r_j})^{\vee}
\right) \\
& \simeq
\prod_{\substack{a\in Q_1 \\ a:i\rightarrow j}}
\left(
\Hom_{\mathcal{O}_{\alpha}}(\mathcal{O}_{\alpha}^{\oplus r_i},\mathcal{O}_{\alpha}^{\oplus r_j})
\times
\Hom_{\mathcal{O}_{\alpha}}(\mathcal{O}_{\alpha}^{\oplus r_j},\mathcal{O}_{\alpha}^{\oplus r_i})
\right)
= R(\overline{Q},\rr,\mathcal{O}_{\alpha}),
\end{split}
\] where $\overline{Q}$ is the double quiver of $Q$ i.e. the quiver
obtained by adding to $Q$ an arrow $a^{*}:j\rightarrow i$ for each
arrow $Q_{1}\ni a:i\rightarrow j$. Likewise, using the trace pairing,
we obtain an isomorphism $\mathfrak{gl}(\rr,\mathcal{O}_{\alpha})^{\vee}\simeq\mathfrak{gl}(\rr,\mathcal{O}_{\alpha})$.

Under these identifications, the moment map $\mu_{Q,\rr,\alpha}$
has the following expression, which is similar to the case where $\alpha=1$.
We denote a point of $R(\overline{Q},\rr,\mathcal{O}_{\alpha})$ by
$(x_{a},y_{a})_{a\in Q_{1}}$, where $x_{a}\in\Hom_{\mathcal{O}_{\alpha}}(\mathcal{O}_{\alpha}^{\oplus r_{s(a)}},\mathcal{O}_{\alpha}^{\oplus r_{t(a)}})$
and $y_{a}\in\Hom_{\mathcal{O}_{\alpha}}(\mathcal{O}_{\alpha}^{\oplus r_{t(a)}},\mathcal{O}_{\alpha}^{\oplus r_{s(a)}})$.
Then:\[
\begin{array}{cccc}
\mu_{Q,\rr,\alpha}: & R(\overline{Q},\rr,\mathcal{O}_{\alpha}) & \rightarrow & \mathfrak{gl}(\rr,\mathcal{O}_{\alpha}) \\
& (x_{a},y_{a})_{a\in Q_{1}} & \mapsto & \left(\sum_{t(a)=i}x_ay_a-\sum_{s(a)=i}y_ax_a\right)_{i\in Q_0}
\end{array}
.
\]

Finally, we record the following proposition, which characterises
fibres of the forgetful map $\mu_{Q,\rr,\alpha}^{-1}(0)\rightarrow R(Q,\rr,\mathcal{O}_{\alpha})$,
$(x,y)\mapsto x$. The proof closely follows \cite[Lem. 4.2.]{CBH98},
so we simply indicate how to generalize the main arguments.

\begin{prop} \label{Prop/MomMapExactSeq}

Let $x\in R(Q,\rr,\mathcal{O}_{\alpha})$ and $M$ the corresponding
representation of $Q$. Then there is an exact sequence: \[
\begin{tikzcd}[ampersand replacement = \&, column sep=small, row sep=large]
0 \arrow[r] \& \Ext^1(M,M)^{\vee} \arrow[r] \& R(Q^{\mathrm{op}},\rr,\mathcal{O}_{\alpha}) \arrow[r, "{\mu_{Q,\rr,\alpha}(x,\bullet)}"] \&[4em] \mathfrak{gl}(\rr,\mathcal{O}_{\alpha}) \arrow[r] \& \End(M)^{\vee} \arrow[r] \& 0,
\end{tikzcd}
\]where $Q^{\mathrm{op}}$ is the quiver obtained from $Q$ by reversing
all arrows and $\mathfrak{gl}(\rr,\mathcal{O}_{\alpha})\rightarrow\End(M)^{\vee}$
is the dual of the inclusion $\End(M)\hookrightarrow\mathfrak{gl}(\rr,\mathcal{O}_{\alpha})$
(using the identification $\mathfrak{gl}(\rr,\mathcal{O}_{\alpha})^{\vee}\simeq\mathfrak{gl}(\rr,\mathcal{O}_{\alpha})$).

\end{prop}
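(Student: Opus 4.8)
The plan is to transpose the argument of \cite[Lem. 4.2.]{CBH98} from a base field to $\mathcal{O}_{\alpha}$, the only genuinely new point being where local freeness is used. I regard a locally free representation $M$ of rank $\rr$ as a module over the path algebra $\mathcal{O}_{\alpha}Q=\mathcal{O}_{\alpha}\otimes_{\KK}\KK Q$ with $M_{i}=e_{i}M$, and I take $\End(M)=\Hom_{\mathcal{O}_{\alpha}Q}(M,M)$ and $\Ext^{1}(M,M)=\Ext^{1}_{\mathcal{O}_{\alpha}Q}(M,M)$. The starting point is the projective resolution obtained by applying $-\otimes_{\mathcal{O}_{\alpha}Q}M$ to the standard bimodule resolution of $\mathcal{O}_{\alpha}Q$, namely the length-one complex of $\mathcal{O}_{\alpha}Q$-modules
\[
0\to\bigoplus_{a\colon i\to j}\mathcal{O}_{\alpha}Q\,e_{j}\otimes_{\mathcal{O}_{\alpha}}M_{i}\xrightarrow{\ \delta\ }\bigoplus_{i\in Q_{0}}\mathcal{O}_{\alpha}Q\,e_{i}\otimes_{\mathcal{O}_{\alpha}}M_{i}\xrightarrow{\ \varepsilon\ }M\to0,
\]
with $\varepsilon$ and $\delta$ given by exactly the same formulas as over a field; exactness is checked on these explicit formulas verbatim as in the classical case, since they do not involve the base ring (concretely, the bimodule resolution is split over $\mathcal{O}_{\alpha}Q$ on one side, so tensoring with $M$ preserves exactness). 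The point where the hypothesis on $M$ enters is that the two nonzero terms to the left of $M$ are \emph{projective} $\mathcal{O}_{\alpha}Q$-modules precisely because each $M_{i}$ is a free --- equivalently projective --- $\mathcal{O}_{\alpha}$-module; this is also the reason locally free representations have projective dimension $\leq1$.

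Next I apply $\Hom_{\mathcal{O}_{\alpha}Q}(-,M)$ to this resolution. Using the tensor--hom adjunction $\Hom_{\mathcal{O}_{\alpha}Q}(\mathcal{O}_{\alpha}Q\,e_{i}\otimes_{\mathcal{O}_{\alpha}}M_{i},M)\simeq\Hom_{\mathcal{O}_{\alpha}}(M_{i},e_{i}M)=\Hom_{\mathcal{O}_{\alpha}}(M_{i},M_{i})$ and the analogous identity for the other term, the resulting two-term complex reads
\[
\mathfrak{gl}(\rr,\mathcal{O}_{\alpha})=\bigoplus_{i}\Hom_{\mathcal{O}_{\alpha}}(M_{i},M_{i})\xrightarrow{\ d\ }\bigoplus_{a\colon i\to j}\Hom_{\mathcal{O}_{\alpha}}(M_{i},M_{j})=R(Q,\rr,\mathcal{O}_{\alpha}),
\]
where $d(\theta)_{a}=\theta_{t(a)}x_{a}-x_{a}\theta_{s(a)}$ is the infinitesimal action of $\GL(\rr,\mathcal{O}_{\alpha})$ at $x$. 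Since the resolution has length one, the cohomology of this complex computes $\Ext^{\bullet}_{\mathcal{O}_{\alpha}Q}(M,M)$, so I obtain the four-term exact sequence
\[
0\to\End(M)\to\mathfrak{gl}(\rr,\mathcal{O}_{\alpha})\xrightarrow{\ d\ }R(Q,\rr,\mathcal{O}_{\alpha})\to\Ext^{1}(M,M)\to0,
\]
in which the leftmost map is the tautological inclusion $\End(M)\hookrightarrow\mathfrak{gl}(\rr,\mathcal{O}_{\alpha})$.

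Finally I dualise over $\KK$: every term is a finite-dimensional $\KK$-vector space, so $\Hom_{\KK}(-,\KK)$ is exact, and the sequence becomes
\[
0\to\Ext^{1}(M,M)^{\vee}\to R(Q,\rr,\mathcal{O}_{\alpha})^{\vee}\xrightarrow{\ d^{\vee}\ }\mathfrak{gl}(\rr,\mathcal{O}_{\alpha})^{\vee}\to\End(M)^{\vee}\to0.
\]
Rewriting it through the trace pairings of the preliminary section --- $R(Q,\rr,\mathcal{O}_{\alpha})^{\vee}\simeq R(Q^{\mathrm{op}},\rr,\mathcal{O}_{\alpha})$, coming from $\Hom_{\mathcal{O}_{\alpha}}(M_{i},M_{j})^{\vee}\simeq\Hom_{\mathcal{O}_{\alpha}}(M_{j},M_{i})$ together with the reversal of arrows, and $\mathfrak{gl}(\rr,\mathcal{O}_{\alpha})^{\vee}\simeq\mathfrak{gl}(\rr,\mathcal{O}_{\alpha})$ --- turns it into the sequence claimed, the dual of the inclusion $\End(M)\hookrightarrow\mathfrak{gl}(\rr,\mathcal{O}_{\alpha})$ becoming the rightmost map. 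The one computation that needs care --- and which I expect to be the main, if modest, obstacle --- is to check that $d^{\vee}$ agrees with $\mu_{Q,\rr,\alpha}(x,\bullet)$, i.e.\ that $\theta*\mu_{Q,\rr,\alpha}(x,y)=d(\theta)*y$ for all $\theta\in\mathfrak{gl}(\rr,\mathcal{O}_{\alpha})$ and $y\in R(Q^{\mathrm{op}},\rr,\mathcal{O}_{\alpha})$, where $*$ denotes the relevant trace pairings --- concretely, the $t^{\alpha-1}$-coefficient of an ordinary trace of a product of matrices over $\mathcal{O}_{\alpha}$. This follows from cyclicity of $\Tr$ together with the identity $(ab)*c=a*(bc)$ for the pairing on $\mathcal{O}_{\alpha}$ (both sides being the coefficient of $t^{\alpha-1}$ in $abc$), once the arrows of $Q^{\mathrm{op}}$ are matched with the $y$-coordinates; a routine check of signs against the definition of $\mu_{Q,\rr,\alpha}$ completes the identification.
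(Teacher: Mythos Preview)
Your proof is correct and follows essentially the same route as the paper: take the standard length-one projective resolution of $M$ over $\mathcal{O}_{\alpha}Q$, apply $\Hom_{\mathcal{O}_{\alpha}Q}(-,M)$, and dualise over $\KK$ using the trace pairings. The only difference is cosmetic: the paper obtains the projective resolution by citing \cite[Cor.~7.2]{GLS17a} (identifying $\mathcal{O}_{\alpha}Q$ with an algebra $H(C,D,\Omega)$ of Geiss--Leclerc--Schr\"oer), whereas you derive it from the bimodule resolution and supply the verification that $d^{\vee}=\mu_{Q,\rr,\alpha}(x,\bullet)$ which the paper leaves implicit.
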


\begin{proof}

Let $\mathcal{O}_{\alpha}Q:=\mathcal{O}_{\alpha}\otimes_{\KK}\KK Q$
be the path algebra of $Q$ with coefficients in $\mathcal{O}_{\alpha}$
(see \cite[\S 1]{CB92}) and denote by $e_{i}\in\mathcal{O}_{\alpha}Q$
the lazy path starting and ending at vertex $i\in Q_{0}$. The category
of representations of $Q$ over $\mathcal{O}_{\alpha}$ is equivalent
to $\mathcal{O}_{\alpha}Q-\mathrm{Mod}$. By \cite[Cor. 7.2.]{GLS17a},
$M$ admits the following projective resolution:\[
0\rightarrow
\bigoplus_{\substack{a\in Q_1 \\ a:i\rightarrow j}}\mathcal{O}_{\alpha}Q\cdot e_j \otimes_{\mathcal{O}_{\alpha}}M_i\overset{f}{\longrightarrow}
\bigoplus_{i\in Q_0}\mathcal{O}_{\alpha}Q\cdot e_i\otimes_{\mathcal{O}_{\alpha}}M_i\overset{g}{\longrightarrow}
M\rightarrow0,
\] where, for $\rho=a_{1}\ldots a_{s}$ a path in $Q$, $f(\rho e_{t(a)}\otimes m)=\rho ae_{s(a)}\otimes m-\rho e_{t(a)}\otimes f_{a}(m)$
and $g(\rho e_{i}\otimes m)=f_{\rho}(m)$ (here $f_{\rho}$ is a shorthand
for $f_{a_{1}}\circ\ldots\circ f_{a_{s}}$). Indeed, in the notation
of \cite{GLS17a}, $\mathcal{O}_{\alpha}Q\simeq H(C,D,\Omega)$, where
$C$ is the symmetric Cartan matrix associated to $Q$, $D=\alpha\cdot\mathrm{Id}$
and $\Omega$ is the orientation of $Q$.

The wanted exact sequence is then obtained by applying the functor
$\Hom_{\mathcal{O}_{\alpha}Q}(\bullet,M)$ and dualizing. \end{proof}

\paragraph*{Krull-Schmidt theorem and absolutely indecomposable representations}

Let us now assume that $\KK=\FF_{q}$. We collect some standard results
on absolutely indecomposable representations and their rings of endomorphisms.
We also check that standard Galois descent techniques apply to $\Rep_{\mathcal{O}_{\alpha}}(Q)^{\mathrm{l.f.}}$,
the category of finite-rank, locally free representations of $Q$
over $\mathcal{O}_{\alpha}$, following \cite{Moz19}.

We start with some definitions. Let $M$ be a locally free representation
of $Q$ over $\mathcal{O}_{\alpha}$. $M$ is called indecomposable
if it cannot be split into a direct sum of non-zero (locally free\footnote{Note that a direct summand of a locally free representation is locally
free (this can be seen from the classification of finitely generated
$\KK[t]$-modules).}) subrepresentations. Given a field extension $\KK'/\KK$, we denote
by $M_{\KK'}=M\otimes_{\KK}\KK'$ the representation obtained from
$M$ by base change i.e. $\left(M_{\KK'}\right)_{i}=M_{i}\otimes_{\KK}\KK'$
for $i\in Q_{0}$ and $\left(f_{\KK'}\right)_{a}=f_{a}\otimes\mathrm{id}_{\KK'}$
for $a\in Q_{1}$. Let $\overline{\KK}$ be an algebraic closure of
$\KK$. A locally free representation $M$ is called absolutely indecomposable
if $M_{\overline{\KK}}$ is indecomposable.

In this paper, we use well-known properties of absolutely indecomposable
representations, which rely on standard Galois descent arguments.
These arguments were formalised by Mozgovoy in \cite[\S 3]{Moz19}
through the notion of a linear stack. In order to apply his results,
we show that the assignment $\KK'\mapsto\Rep_{\mathcal{O}_{\alpha}\otimes\KK'}(Q)^{\mathrm{l.f.}}$
gives a linear stack on the small étale site over $\KK$.

We check that Galois descent holds for objects and morphisms. Descent
for objects follows from the fact that $\left[R(Q,\rr,\mathcal{O}_{\alpha})/\GL(\rr,\mathcal{O}_{\alpha})\right]$
is an Artin stack. Note that $\left[R(Q,\rr,\mathcal{O}_{\alpha})/\GL(\rr,\mathcal{O}_{\alpha})\right](\KK')$
is equivalent to the groupoid of $\Rep_{\mathcal{O}_{\alpha}\otimes\KK'}(Q)^{\mathrm{l.f.}}$
because, by Lang's theorem, all principal $\GL(\rr,\mathcal{O}_{\alpha})$-bundles
over $\Spec(\KK')$ are trivial. Descent for morphisms follows from
the following Galois-equivariant isomorphism: given a field extension
$\KK''/\KK'$ and $M,N\in\Rep_{\mathcal{O}_{\alpha}\otimes\KK'}(Q)^{\mathrm{l.f.}}$,
$\Hom(M_{\KK''},N_{\KK''})\simeq\Hom(M,N)\otimes_{\KK'}\KK''$. Then
we obtain $\Hom(M,N)\simeq\Hom(M_{\KK''},N_{\KK''})^{\mathrm{Gal}(\KK''/\KK')}$
i.e. Galois descent for morphisms. Lastly, we check that $\Rep_{\mathcal{O}_{\alpha}\otimes\KK'}(Q)^{\mathrm{l.f.}}$
is Krull-Schmidt (see \cite[\S 2.3.]{Moz19}):

\begin{prop} \label{Prop/Krull-Schmidt}

$\Rep_{\mathcal{O}_{\alpha}}(Q)^{\mathrm{l.f.}}$ is a Krull-Schmidt
category i.e. any objects splits into a direct sum of indecomposable
objects and the ring of endomorphisms of an indecomposable object
is local.

\end{prop}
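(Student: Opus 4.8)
The plan is to reduce the statement to the well-known fact that a module category over a complete (or Artinian) local ring, with finite-dimensional Hom-spaces, is Krull-Schmidt. Concretely, the category $\Rep_{\mathcal{O}_\alpha}(Q)^{\mathrm{l.f.}}$ is the full subcategory of $\mathcal{O}_\alpha Q-\mathrm{Mod}$ on modules which are free over $\mathcal{O}_\alpha$ at each vertex. First I would observe that this subcategory is closed under finite direct sums and direct summands (the latter by the footnote's remark, using the classification of finitely generated $\KK[t]$-modules: a summand of a free $\mathcal{O}_\alpha$-module, being a finitely generated $\KK[t]$-module killed by $t^\alpha$ with no $t^k$-torsion for $k<\alpha$, is free). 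Hence it is an additive, idempotent-complete (Karoubian) category, and by a standard criterion (see e.g. Krause or Atiyah), an additive Karoubian category in which every object has a \emph{semiperfect} endomorphism ring — equivalently, in which idempotents lift along the radical and $\End(M)/\mathrm{rad}\,\End(M)$ is semisimple — is Krull-Schmidt. So it suffices to check these ring-theoretic conditions on $\End(M)$ for $M$ locally free of finite rank.

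Next I would establish that $\End_{\mathcal{O}_\alpha Q}(M)$ is a finite-dimensional $\FF_q$-algebra: indeed $\End(M) \subseteq \mathfrak{gl}(\rr,\mathcal{O}_\alpha) = \prod_i \Mat_{r_i}(\mathcal{O}_\alpha)$, which has $\FF_q$-dimension $\alpha\sum_i r_i^2 < \infty$. A finite-dimensional algebra over a field is Artinian, hence semiperfect: its Jacobson radical is nilpotent, the quotient by the radical is semisimple, and idempotents lift modulo the radical. This immediately gives that $\End(M)$ is a semiperfect ring for every object $M$, which is exactly the condition guaranteeing that the category is Krull-Schmidt — every object decomposes into finitely many indecomposables (uniquely up to isomorphism and reordering), and $M$ is indecomposable if and only if $\End(M)$ has no nontrivial idempotents, i.e. $\End(M)/\mathrm{rad}$ has no nontrivial idempotents, i.e. (being semisimple) it is a division ring, i.e. $\End(M)$ is local.

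Slightly more carefully, to get the decomposition into indecomposables one also uses that $\End(M)$ being finite-dimensional forbids infinite strictly refining direct-sum decompositions: any chain of split idempotents corresponds to a chain of summands whose $\FF_q$-dimensions strictly decrease, so the process terminates. The uniqueness (Krull–Schmidt–Azumaya) part then follows formally once each indecomposable has local endomorphism ring, via the standard exchange argument. So the write-up amounts to: (i) the category is additive and Karoubian; (ii) $\End(M)$ is finite-dimensional over $\FF_q$, hence Artinian, hence local when $M$ is indecomposable, and of finite length which bounds decomposition length; (iii) invoke Azumaya's theorem. None of this differs from the classical $\alpha=1$ case except that the base is $\mathcal{O}_\alpha$ rather than $\FF_q$; the only place where $\mathcal{O}_\alpha$ enters nontrivially is verifying that $\Rep_{\mathcal{O}_\alpha}(Q)^{\mathrm{l.f.}}$ is closed under summands, i.e. the footnote's point.

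The main obstacle — really the only subtlety — is the idempotent-completeness of $\Rep_{\mathcal{O}_\alpha}(Q)^{\mathrm{l.f.}}$: a priori a summand of a locally free representation in $\mathcal{O}_\alpha Q-\mathrm{Mod}$ need not be locally free, and one must rule this out. The fix is precisely the observation recorded in the footnote: a direct summand of $\mathcal{O}_\alpha^{\oplus r}$ as an $\mathcal{O}_\alpha$-module is again free. One deduces this from the structure theorem for finitely generated modules over the PID $\KK[t]$ — such a summand is $\bigoplus_k \KK[t]/(t^{n_k})$ with $n_k \le \alpha$, and being a summand of a module that is $t^\alpha$-torsion-free-in-the-appropriate-sense forces all $n_k = \alpha$ (alternatively, a projective $\mathcal{O}_\alpha$-module is free since $\mathcal{O}_\alpha$ is local Artinian, and a summand of a free module over a local ring is free). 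Once this is in hand, everything else is the textbook Krull-Schmidt argument applied to the finite-dimensional $\FF_q$-algebra $\End(M)$, and I would keep that part brief, citing e.g. \cite{Moz19} or a standard reference.
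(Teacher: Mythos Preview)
Your argument is correct and complete. It differs from the paper's in that you invoke the general machinery (finite-dimensional $\Rightarrow$ Artinian $\Rightarrow$ semiperfect, plus the Karoubian criterion for Krull--Schmidt), whereas the paper argues directly: for $M$ indecomposable and $\xi\in\End(M)$, the generalised eigenspaces $M_i^P$ of $\xi$ (for $P$ irreducible over $\FF_q$) are $\mathcal{O}_\alpha$-submodules stable under the arrows, because $\xi$ commutes with $t$; their direct sum is $M$, so by indecomposability $M=M^P$ for a single $P$, forcing $\xi$ to be either invertible or nilpotent. Your route is cleaner and more portable; the paper's route is more explicit and yields immediately that \emph{every} non-invertible endomorphism is nilpotent (not just that the radical is a nilpotent ideal), which is exactly what is quoted in the proof of Proposition~\ref{Prop/EndRings}. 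Of course your approach gives this too, since in a local Artinian ring the maximal ideal equals the nilradical, but you may want to state that consequence explicitly so that the forward reference still works.
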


\begin{proof}

It is clear, by induction on rank vectors, that any locally free,
finite-rank representation of $Q$ over $\mathcal{O}_{\alpha}$ can
be decomposed into a direct sum of indecomposable representations
(which are also locally free, as mentioned above).

Let $M\in\Rep_{\mathcal{O}_{\alpha}}(Q)^{\mathrm{l.f.}}$ be indecomposable.
We prove that any endomorphism of $M$ is either invertible or nilpotent.
Then nilpotent elements form a maximal double-sided ideal, which is
the unique maximal ideal of $\End(M)$. Let $\xi\in\End(M)$. Since
$\xi$ commutes with the action of $t\in\mathcal{O}_{\alpha}$, its
characteristic subspaces $M_{i}^{P},\ i\in Q_{0}$ ($P$ an irreducible
polynomial over $\mathbb{F}_{q}$) are $\mathcal{O}_{\alpha}$-submodules,
which are preserved by the action of arrows. Moreover, because $M_{i}=\bigoplus_{P}M_{i}^{P}$,
these modules are free over $\mathcal{O}_{\alpha}$. We therefore
obtain $M=\bigoplus_{P}M^{P}$ and since $M$ is indecomposable, $M=M^{P}$
for some $P$. Thus $\xi$ is either invertible ($P\ne X$) or nilpotent
($P=X$). \end{proof}

Now, the following well-known properties follow from the general framework
of \cite{Moz19}. Given $M\in\Rep_{\mathcal{O}_{\alpha}}(Q)^{\mathrm{l.f.}}$,
its ring of endomorphisms $\End(M)$ is a finite-dimensional algebra.
We call $\mathrm{Rad}(M)\subseteq\End(M)$ its Jacobson radical and
$\mathrm{top}(\End(M)):=\End(M)/\mathrm{Rad}(M)$.

\begin{prop} \label{Prop/EndRings}

Let $M\in\Rep_{\mathcal{O}_{\alpha}}(Q)^{\mathrm{l.f.}}$.
\begin{enumerate}
\item If $M$ is indecomposable, then $\mathrm{Rad}(M)$ consists of nilpotent
elements.
\item $M$ is absolutely indecomposable if, and only if, $\mathrm{top}(\End(M))=\KK$.
\item If $M$ is indecomposable and $\rr$ is indivisible, then $M$ is
absolutely indecomposable.
\end{enumerate}
\end{prop}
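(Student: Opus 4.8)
The plan is to deduce all three statements from the Krull--Schmidt property established in Proposition \ref{Prop/Krull-Schmidt}, together with the fact that $\End(M)$ is a finite-dimensional $\KK$-algebra (since $M$ has finite rank over $\mathcal{O}_\alpha$, hence $\End(M)$ is a $\KK$-subspace of the finite-dimensional space $\prod_i \End_{\mathcal{O}_\alpha}(M_i)$). For (1), first I would write $M=\bigoplus_{k=1}^n M_k^{\oplus m_k}$ with the $M_k$ pairwise non-isomorphic indecomposables; if $M$ is indecomposable there is just one summand, and $\End(M)$ is local by Proposition \ref{Prop/Krull-Schmidt}. In a finite-dimensional local algebra the Jacobson radical is the unique maximal (two-sided) ideal, it coincides with the set of non-units, and it is nilpotent by Nakayama / finite-dimensionality. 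Since in an indecomposable object every endomorphism is either invertible or nilpotent (shown in the proof of Proposition \ref{Prop/Krull-Schmidt}), the non-units are exactly the nilpotents, so $\mathrm{Rad}(M)$ consists of nilpotent elements. This gives (1).

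For (2), the point is that $\mathrm{top}(\End(M))=\End(M)/\mathrm{Rad}(M)$ is a finite-dimensional semisimple $\KK$-algebra, and base change behaves well: for any field extension $\KK'/\KK$ one has $\End(M_{\KK'})\simeq\End(M)\otimes_\KK\KK'$ by the Galois-equivariant (indeed, flat-base-change) isomorphism $\Hom(M_{\KK'},N_{\KK'})\simeq\Hom(M,N)\otimes_\KK\KK'$ recorded just before Proposition \ref{Prop/Krull-Schmidt}. I would argue that $M$ is indecomposable iff $\End(M)$ is local iff $\mathrm{top}(\End(M))$ is a division algebra; and $M$ is absolutely indecomposable iff $\End(M)\otimes_\KK\overline\KK$ is local iff $\mathrm{top}(\End(M))\otimes_\KK\overline\KK$ is a division algebra over $\overline\KK$, which forces it to be $\overline\KK$ itself, hence $\mathrm{top}(\End(M))=\KK$ by descent (a finite-dimensional $\KK$-algebra becoming $\overline\KK$ after base change to $\overline\KK$ must be $\KK$, by dimension count). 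Conversely if $\mathrm{top}(\End(M))=\KK$ then $\mathrm{Rad}(M)\otimes\overline\KK$ is a nilpotent ideal of $\End(M_{\overline\KK})$ with quotient $\overline\KK$, so $\End(M_{\overline\KK})$ is local and $M_{\overline\KK}$ is indecomposable. (I would be careful to note that $\mathrm{Rad}(M)\otimes_\KK\KK'$ is indeed the radical of $\End(M_{\KK'})$, which holds because the quotient $\mathrm{top}(\End(M))\otimes_\KK\KK'$ is separable-by-reduction here — the only subtlety, and it is handled by part (1) giving nilpotence of the radical, so the extension of scalars of the radical stays nilpotent and the quotient stays reduced since $\mathrm{top}(\End(M))$ is a field when $M$ is indecomposable.)

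For (3), I would combine (2) with the exact sequence of Proposition \ref{Prop/MomMapExactSeq}, or more elementarily with a $\dim$ argument on $\mathrm{top}(\End(M))$: if $M$ is indecomposable then $D:=\mathrm{top}(\End(M))$ is a finite division algebra over $\FF_q$, hence by Wedderburn a finite field $\FF_{q^d}$ for some $d\geq 1$. The composite $\mathcal{O}_\alpha\otimes_\KK D = \mathcal{O}_\alpha\otimes_{\FF_q}\FF_{q^d}$ acts on each $M_i$ making it a free module over this ring (again by the characteristic-subspace decomposition from the proof of Proposition \ref{Prop/Krull-Schmidt}, applied to a generator of $\FF_{q^d}/\FF_q$), so $d\mid r_i$ for every $i\in Q_0$, i.e. $d$ divides $\gcd_i r_i$. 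If $\rr$ is indivisible this gcd is $1$, forcing $d=1$, so $\mathrm{top}(\End(M))=\FF_q=\KK$ and $M$ is absolutely indecomposable by (2). I expect the main obstacle to be pinning down exactly why $\mathrm{Rad}(M)\otimes_\KK\KK'$ is the radical of the base-changed endomorphism ring — the cleanest route is to use part (1): nilpotence of $\mathrm{Rad}(M)$ is preserved under $\otimes_\KK\KK'$, and the quotient is a field (when $M$ is indecomposable), so it remains reduced, identifying the base-changed radical correctly. Everything else is standard finite-dimensional algebra plus the Galois-descent package already set up in the excerpt.
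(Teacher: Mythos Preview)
Your proposal is essentially correct and more self-contained than the paper's proof, which simply invokes \cite[Lem.~3.8.2, Thm.~3.9.1]{Moz19} for parts (2) and (3). Your direct arguments via base change of endomorphism rings and Wedderburn's little theorem are perfectly sound and arguably more illuminating. For (3), the paper instead uses that $M_{\KK'}$ decomposes into $[\KK':\KK]$ Galois twists of a single absolutely indecomposable representation (where $\KK'=\mathrm{top}(\End(M))$), whence $[\KK':\KK]$ divides $\rr$; your route via an $\mathcal{O}_\alpha\otimes_{\FF_q}\FF_{q^d}$-module structure on each $M_i$ reaches the same divisibility more elementarily.

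There is one small gap in your argument for (3): you assert that $D=\mathrm{top}(\End(M))\simeq\FF_{q^d}$ \emph{acts} on each $M_i$, but $D$ is a quotient of $\End(M)$, not a subalgebra. The reference to the characteristic-subspace decomposition does not by itself produce such an action. You need a splitting $\FF_{q^d}\hookrightarrow\End(M)$ of the quotient map, which exists by the Wedderburn--Malcev theorem (or Hensel's lemma applied to a separable generator of $\FF_{q^d}/\FF_q$, using that $\mathrm{Rad}(M)$ is nilpotent). Alternatively, you can bypass the module structure entirely: pick any lift $\xi\in\End(M)$ of a generator of $\FF_{q^d}$; since $P_\gamma(\xi)\in\mathrm{Rad}(M)$ is nilpotent (where $P_\gamma$ is the degree-$d$ minimal polynomial of the generator), the characteristic polynomial of $\xi$ on the $r_i$-dimensional $\FF_q$-vector space $M_i/tM_i$ is a power of $P_\gamma$, giving $d\mid r_i$ directly. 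Either fix is short, but one of them should be made explicit.
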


\begin{proof}

1. follows from the proof of Proposition \ref{Prop/Krull-Schmidt},
since $\mathrm{Rad}(M)$ is the maximal ideal of $\End(M)$. 2. is
\cite[Lemma 3.8.2.]{Moz19}.

3. follows from \cite[Thm. 3.9.1.]{Moz19}. Indeed, if $M$ is indecomposable,
then $\mathrm{top}(\End(M))=\KK'$ is a finite field extension of
$\KK$ and $M_{\KK'}$ splits as a direct sum of $[\KK':\KK]$ Galois
twists of some absolutely indecomposable representation over $\mathcal{O}_{\alpha}\otimes_{\KK}\KK'$,
hence $[\KK':\KK]\vert\rr$. If $\rr$ is indivisible, this implies
that $\mathrm{top}(\End(M))=\KK$, hence $M$ is absolutely indecomposable
by 2. \end{proof}

\subsection{Plethystic notations \label{Sect/Plethysm}}

In this section, we introduce the $\lambda$-ring formalism required
to formulate Theorem \ref{Thm/IntroExpFmlKacPol}, again following
\cite{Moz19}. In particular, we define the plethystic exponential
and recall a well-known formula for counting objects with endomorphisms
in $\Rep_{\mathcal{O}_{\alpha}}(Q)^{\mathrm{l.f.}}$. We assume the
ground field to be $\KK=\FF_{q}$.

Let us first define $\mathcal{V}$, the $\lambda$-ring of counting
functions (see \cite[\S 2.1-2.]{Moz19}). As a ring, $\mathcal{V}=\prod_{n\geq1}\mathbb{Q}$.
Here are a few examples of counting functions. Given a quiver $Q$
and a rank vector $\rr$, we define $A_{Q,\rr,\alpha}:=\left(A_{Q,\rr,\alpha}(q^{n})\right)_{n\geq1}\in\mathcal{V}$,
where $A_{Q,\rr,\alpha}(q^{n})$ is the number of isomorphism classes
of absolutely indecomposable, locally free representations of $Q$
over $\FF_{q^{n}}[t]/(t^{\alpha})$, in rank $\rr$. If $X$ is an
algebraic variety over $\FF_{q}$, then $\sharp X:=\left(\sharp_{\FF_{q^{n}}}X\right)_{n\geq1}\in\mathcal{V}$
is also a counting function. A last, easy example is given by rational
fractions: if $R\in\QQ(T)$ does not vanish on powers of $q$, then
we obtain a counting function $R(q):=\left(R(q^{n})\right)_{n\geq1}\in\mathcal{V}$.

The $\lambda$-ring structure on $\mathcal{V}$ is given by the Adams
operators $\psi_{m},\ m\geq1$, defined by: \[
\psi_m(a)=(a_{mn})_{n\geq1},\text{ where } a=(a_n)_{n\geq1}.
\]

Our main counting results are expressed in terms of power series.
For instance, we consider the following power series, which encodes
all counting functions $A_{Q,\rr,\alpha}$ at once: \[
\sum_{\rr\in\NN^{Q_0}\setminus\{0\}}A_{Q,\rr,\alpha}\cdot t^{\rr}\in\mathcal{V}[[t_i,\ i\in Q_0]].
\] The power series ring $\mathcal{V}[[t_{i},\ i\in Q_{0}]]$ can also
be endowed with a $\lambda$-ring structure, given by the Adams operators:
\[
\psi_m\left(\sum_{\rr\in\NN^{Q_0}}f_{\rr}\cdot t^{\rr}\right):=\sum_{\rr\in\NN^{Q_0}}\psi_m(f_{\rr})\cdot t^{m\rr}.
\] Let $\mathcal{V}[[t_{i},\ i\in Q_{0}]]_{+}=\{f_{0}=0\}\subset\mathcal{V}[[t_{i},\ i\in Q_{0}]]$
be the augmentation ideal. The plethystic exponential is the operator
$\Exp_{q,t}:\mathcal{V}[[t_{i},\ i\in Q_{0}]]_{+}\rightarrow\mathcal{V}[[t_{i},\ i\in Q_{0}]]$
defined by:\[
\Exp_{q,t}(F)=\exp\left(\sum_{m\geq1}\frac{\psi_m(F)}{m}\right).
\] 

The following result is a direct application of \cite[Thm. 4.6.]{Moz19},
on the weighted volume of the stack of objects with endomorphisms
(associated to a linear stack).

\begin{prop} \label{Prop/ExpFml}

Let $Q$ be a quiver and $\alpha\geq1$. Let us define:\[
\vol_{\End}(\Rep_{\mathcal{O}_{\alpha}}(Q,\rr)^{\mathrm{l.f.}})
=
\left(
\sum_{[M]}
\frac{\sharp\End(M)}{\sharp\Aut(M)}
\right)_{n\geq1}\in\mathcal{V},
\]where the sum runs on isomorphism classes of $\Rep_{\mathcal{O}_{\alpha}\otimes\FF_{q^{n}}}(Q,\rr)^{\mathrm{l.f.}}$.
Then:\[
\sum_{\rr\in\NN^{Q_0}}
\vol_{\End}(\Rep_{\mathcal{O}_{\alpha}}(Q,\rr)^{\mathrm{l.f.}})\cdot t^{\rr}
=
\Exp_{q,t}\left(\sum_{\rr\in\NN^{Q_0}\setminus\{0\}}\frac{A_{Q,\rr,\alpha}}{1-q^{-1}}\cdot t^{\rr}\right).
\]

\end{prop}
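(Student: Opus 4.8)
The plan is to deduce this directly from Mozgovoy's general theorem on weighted volumes of stacks of objects with endomorphisms attached to a linear stack \cite[Thm. 4.6.]{Moz19}. The input needed is that $\KK'\mapsto\Rep_{\mathcal{O}_{\alpha}\otimes\KK'}(Q)^{\mathrm{l.f.}}$ defines a linear stack on the small \'etale site over $\FF_{q}$, which has already been established in the preceding discussion: Galois descent for objects follows because $\left[R(Q,\rr,\mathcal{O}_{\alpha})/\GL(\rr,\mathcal{O}_{\alpha})\right]$ is an Artin stack and, by Lang's theorem, all $\GL(\rr,\mathcal{O}_{\alpha})$-bundles over $\Spec(\KK')$ are trivial; Galois descent for morphisms follows from the base-change isomorphism $\Hom(M_{\KK''},N_{\KK''})\simeq\Hom(M,N)\otimes_{\KK'}\KK''$; and the Krull-Schmidt property is Proposition \ref{Prop/Krull-Schmidt}. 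The decomposition into connected components indexed by rank vectors $\rr\in\NN^{Q_{0}}$ matches the grading by $t^{\rr}$.

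Next I would identify the quantities. Mozgovoy's formula expresses the generating series of weighted volumes $\vol_{\End}$ of the full stack as the plethystic exponential of the generating series of weighted volumes of the substack of \emph{absolutely indecomposable} objects with endomorphisms. So the main point is to evaluate, for fixed $\rr\neq 0$, the weighted volume of absolutely indecomposable objects, i.e. $\left(\sum_{[M]\text{ abs. indec.}}\sharp\End(M)/\sharp\Aut(M)\right)_{n\geq 1}$, and to see that it equals $A_{Q,\rr,\alpha}/(1-q^{-1})$. By Proposition \ref{Prop/EndRings}.2, an indecomposable $M$ is absolutely indecomposable iff $\mathrm{top}(\End(M))=\KK$, so $\End(M)$ is a local $\KK$-algebra with residue field $\KK$ and $\Aut(M)=\End(M)\setminus\mathrm{Rad}(M)$. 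Hence $\sharp\End(M)/\sharp\Aut(M)=\sharp\End(M)/(\sharp\End(M)-\sharp\mathrm{Rad}(M))=1/(1-q^{-1})$ over $\FF_{q}$, and more generally $1/(1-q^{-n})$ over $\FF_{q^{n}}$, since $\mathrm{Rad}(M)$ has index $q^{n}$ in $\End(M)$. Summing over the $A_{Q,\rr,\alpha}(q^{n})$ isomorphism classes of absolutely indecomposable representations of rank $\rr$ over $\FF_{q^{n}}[t]/(t^{\alpha})$ gives $A_{Q,\rr,\alpha}(q^{n})/(1-q^{-n})$, which is exactly the $n$-th component of $A_{Q,\rr,\alpha}/(1-q^{-1})\in\mathcal{V}$.

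Finally, I would assemble: $\vol_{\End}(\Rep_{\mathcal{O}_{\alpha}}(Q,\rr)^{\mathrm{l.f.}})$ as defined in the statement is precisely the weighted volume of the stack of rank-$\rr$ objects with endomorphisms appearing on the left-hand side of Mozgovoy's identity (the $\rr=0$ term contributing $1$), and substituting the computation of the previous paragraph into the right-hand side yields the claimed equality of power series in $\mathcal{V}[[t_{i},\ i\in Q_{0}]]$. I expect the only real subtlety to be bookkeeping: checking that the $\lambda$-ring structure on $\mathcal{V}[[t_{i}]]$ used here (Adams operators $\psi_{m}$ acting by $\psi_{m}(f_{\rr})t^{m\rr}$) is the one for which \cite[Thm. 4.6.]{Moz19} is stated, and that our normalization of $\Exp_{q,t}$ agrees with Mozgovoy's; once the dictionary is fixed, the proof is a direct citation plus the elementary local-ring count above.
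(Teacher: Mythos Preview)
Your proposal is correct and follows exactly the approach the paper intends: the paper states the proposition as ``a direct application of \cite[Thm. 4.6.]{Moz19}'' with no further proof, relying on the linear-stack verification already carried out in Section~\ref{Sect/QuiverRep}. You have simply spelled out the step the paper leaves implicit, namely the computation $\sharp\End(M)/\sharp\Aut(M)=1/(1-q^{-1})$ for absolutely indecomposable $M$ via Proposition~\ref{Prop/EndRings}.2.
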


\subsection{Graph-theoretic notations \label{Sect/Graph}}

In this section, we set notations for the operations on quivers which
appear in the proofs of our main counting results. These include restricting
to subquivers, contracting and deleting arrows. We also recall basic
notions on Betti numbers of graphs and spanning trees.

Let $Q$ be a quiver. We define subquivers of $Q$ obtained by restricting
to a subset of vertices or arrows of $Q$.

\begin{df}

Let $I\subseteq Q_{0}$ be a subset of vertices and $J\subseteq Q_{1}$
be a subset of arrows of $Q$.
\begin{enumerate}
\item $Q\restriction_{I}$ is the quiver with set of vertices $I$ and set
of arrows $Q_{1,I}:=\{a\in Q_{1}\ \vert\ s(a),t(a)\in I\}$. The source
and target maps of $Q\restriction_{I}$ are obtained from those of
$Q$ by restriction.
\item $Q\restriction_{J}$ is the quiver with set of vertices $Q_{0}$ and
set of arrows $J$. The source and target maps of $Q\restriction_{J}$
are obtained from those of $Q$ by restriction.
\end{enumerate}
\end{df}

We also define quivers obtained from $Q$ by contracting or deleting
an arrow.

\begin{df}

Let $a\in Q_{1}$ be an arrow of $Q$. Consider the equivalence relation
$\sim_{a}$ on $Q_{0}$, whose equivalence classes are $\{s(a),t(a)\}$
and $\{b\},\ b\in Q_{1}\setminus\{a\}$. Given a vertex $i\in Q_{0}$,
we denote by $[i]$ the equivalence class of $i$ under $\sim_{a}$.
\begin{enumerate}
\item $Q/a$ is the quiver obtained from $Q$ by contracting $a$ i.e. its
set of vertices is $Q_{0}/\sim_{a}$ and its set of arrows is $(Q/a)_{1}:=\{[s(b)]\rightarrow[t(b)],\ b\in Q_{1}\setminus\{a\}\}$.
The source and target maps are obtained from those of $Q$ by composing
with $Q_{0}\rightarrow Q_{0}/\sim_{a}$.
\item $Q\setminus a$ is the quiver obtained from $Q$ by deleting $a$
i.e. its set of vertices is $Q_{0}$ and its set of arrows is $(Q\setminus a)_{1}:=Q_{1}\setminus\{a\}$.
The source and target maps are obtained from those of $Q$ by restriction.
\end{enumerate}
Likewise, given $\lambda\in\ZZ^{Q_{0}}$, we define:
\begin{enumerate}
\item $\lambda/a\in\ZZ^{(Q/a)_{0}}$ is given by \[
(\lambda/a)_i=
\left\{
\begin{array}{ll}
\lambda_{s(a)}+\lambda_{t(a)} & ,\ i=[s(a)]=[t(a)] \\
\lambda_i & ,\ i\not\sim_a s(a),t(a)
\end{array}
\right.
.
\]
\item $\lambda\setminus a\in\ZZ^{(Q\setminus a)_{0}}=\ZZ^{Q_{0}}$ is simply
given by $\lambda$.
\end{enumerate}
\end{df}

We will also need a basic fact on the Betti number of a graph. Given
a graph $\Gamma$ (not necessarily oriented) with $C$ connected components,
$V$ vertices and $E$ edges, the Betti number of $\Gamma$ is $b(\Gamma):=C-V+E$.
This should be thought as the number of independent cycles of $\Gamma$.
Given a quiver $Q$, we define its Betti number $b(Q)$ as the Betti
number of the underlying graph. We also denote by $c(Q)$ the number
of connected components of $Q$. Recall that a quiver $Q$ (or its
underlying graph $\Gamma$) is said to be 2-connected if $\Gamma$
is connected and removing one edge from $\Gamma$ does not disconnect
it.

\begin{prop} \label{Prop/BettiNb}

Let $Q$ be a quiver.
\begin{enumerate}
\item If $Q$ is 2-connected, then $b(Q)>0$.
\item Given $Q_{0}=I_{1}\sqcup\ldots\sqcup I_{s}$ a partition of the set
of vertices and $Q'$ the quiver obtained from $Q$ by contracting
the arrows of subquivers $Q\restriction_{I_{1}},\ldots,Q\restriction_{I_{s}}$,
$b(Q)=b(Q')+b(Q\restriction_{I_{1}})+\ldots+b(Q\restriction_{I_{s}})$.
\item $Q$ is 2-connected if, and only if, for all partitions $Q_{0}=I_{1}\sqcup\ldots\sqcup I_{s}$,
$b(Q)>b(Q\restriction_{I_{1}})+\ldots+b(Q\restriction_{I_{s}})$.
\end{enumerate}
\end{prop}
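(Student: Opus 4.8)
The plan is to prove the three parts essentially by unwinding the definition $b(\Gamma) = C - V + E$ and tracking how the numbers of components, vertices and edges change under the operations involved. For part (1), if $Q$ is 2-connected then its underlying graph $\Gamma$ is connected, so $C = 1$, and removing any single edge leaves it connected, which forces $\Gamma$ to contain at least one cycle; hence $E \geq V$, so $b(Q) = 1 - V + E \geq 1 > 0$. (If one worries about $\Gamma$ having a single vertex and no edges, that graph is not 2-connected in the convention where 2-connectedness requires the existence of an edge whose removal is harmless, so the statement is vacuously safe; I would add a sentence to this effect.)

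For part (2), I would argue by additivity over the partition. Write $\Gamma_j$ for the underlying graph of $Q\restriction_{I_j}$, with $C_j$ components, $V_j = |I_j|$ vertices and $E_j$ edges, and let $\Gamma'$ be the underlying graph of $Q'$. Contracting all edges lying inside the blocks $I_1,\dots,I_s$ collapses each connected component of each $\Gamma_j$ to a single vertex, so $V' = \sum_j C_j$, while the edges of $\Gamma'$ are exactly the edges of $\Gamma$ not lying inside any block, i.e. $E' = E - \sum_j E_j$. The number of connected components is unchanged by contraction of edges, so $C' = C$. Therefore
\[
b(Q') = C' - V' + E' = C - \sum_j C_j + E - \sum_j E_j,
\]
and since $b(Q) = C - V + E$ with $V = \sum_j V_j$, we get
\[
b(Q) - b(Q') = \sum_j \bigl(C_j - V_j + E_j\bigr) = \sum_j b(Q\restriction_{I_j}),
\]
which is exactly the claimed identity.

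For part (3), combine (1) and (2). Given a partition, apply (2) to get $b(Q) = b(Q') + \sum_j b(Q\restriction_{I_j})$, so the inequality $b(Q) > \sum_j b(Q\restriction_{I_j})$ is equivalent to $b(Q') > 0$. For the forward direction, assume $Q$ is 2-connected. One checks that $Q'$, obtained by contracting edges inside the blocks, is again 2-connected whenever the partition is nontrivial and $Q$ has at least one edge: contraction of an edge of a 2-connected graph yields a 2-connected graph (or a single vertex), and here $Q'$ has at least one edge unless every edge of $Q$ lies inside some block, in which case all blocks but one are singletons and the inequality is trivial anyway. By (1), $b(Q') > 0$. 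Conversely, if $Q$ is not 2-connected, either it is disconnected — take $I_1$ to be one component, $I_2$ the rest, whence $b(Q) = b(Q\restriction_{I_1}) + b(Q\restriction_{I_2})$ by (2), violating strictness — or it is connected with a cut-edge $e = \{u,v\}$; take the partition into $\{u\}\cup V_1$ and $V_2$, where $V_1,V_2$ are the vertex sets of the two components of $\Gamma\setminus e$ with $u\in V_1$, $v\in V_2$; then $e$ has both endpoints in distinct blocks but every cycle of $\Gamma$ avoids $e$, so again $b(Q) = \sum_j b(Q\restriction_{I_j})$, contradicting strictness. The main obstacle, and the only place demanding care, is the bookkeeping in part (3)'s forward direction around degenerate cases (singleton blocks, edgeless contractions), together with citing or proving cleanly the fact that contraction preserves 2-connectedness; everything else is routine Euler-characteristic counting.
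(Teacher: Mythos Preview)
Your proof is correct and follows essentially the same approach as the paper: part (1) via edge/vertex counting, part (2) by direct Euler-characteristic bookkeeping tracking $(C_j, V_j, E_j)$, and part (3) by combining (1)--(2) with the observation that contraction preserves 2-connectedness for the forward direction and by exhibiting a cut-edge partition for the converse. You are in fact slightly more careful than the paper in separating out the disconnected case in the converse of (3); note also that the statement of (3) should be read as ranging over nontrivial partitions ($s\geq 2$), and your phrase ``$\{u\}\cup V_1$'' is redundant since $u\in V_1$ already.
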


\begin{proof}

1. By contraposition (and leaving out the trivial case where $Q$
is not connected), suppose that $b(Q)=0$ and $Q$ is connected. Then
$\sharp Q_{1}=\sharp Q_{0}-1$, which is the minimum number of arrows
required to get a connected quiver with $\sharp Q_{0}$ vertices.
Hence removing one arrow disconnects $Q$ i.e. it is not 2-connected.

2. This follows from a straightforward computation. Suppose that $Q$
(resp. $Q\restriction_{I_{k}}$) has $C$ (resp. $C_{k}$) connected
components, $V$ (resp. $V_{k}$) vertices and $A$ (resp. $A_{k}$)
arrows. Then $Q'$ has $C$ connected components, $C_{1}+\ldots+C_{s}$
vertices and $A-A_{1}-\ldots-A_{s}$ arrows.

3. Suppose that $Q$ is 2-connected. Let $Q_{0}=I_{1}\sqcup\ldots\sqcup I_{s}$
be a partition of the set of vertices and $Q'$ be the quiver obtained
from $Q$ by contracting the arrows of subquivers $Q\restriction_{I_{1}},\ldots,Q\restriction_{I_{s}}$.
Then $Q'$ is also 2-connected and the wanted inequality follows by
1. and 2. On the other hand, if $Q$ is not 2-connected, consider
an arrow $a\in Q_{1}$ which disconnects $Q$ when removed. Then $Q\setminus a$
splits into two connected components with sets of vertices $I_{1},I_{2}$
and one can check that $b(Q)=b(Q\restriction_{I_{1}})+b(Q\restriction_{I_{2}})$.
\end{proof}

Finally, we will use labelled trees to index strata of the stack $\left[\mu_{Q,\rr,\alpha}^{-1}(0)/\GL(\rr,\mathcal{O}_{\alpha})\right]$,
in order to compute its cohomology (when $\rr=\underline{1}$). A
spanning tree of a connected quiver $Q$ is a connected subquiver
$Q\restriction_{J}$, where $J\subseteq Q_{1}$ has cardinality $\sharp Q_{0}-1$.
In other words, $b(Q\restriction_{J})=0$, which means that $Q\restriction_{J}$
has no cycles i.e. it is a tree.

\begin{df}

A valued spanning tree $T$ of $Q$ is the datum of a spanning tree
$Q\restriction_{J}$ ($J\subseteq Q_{1}$), along with a labeling
$v:J\rightarrow\mathbb{Z}$.

\end{df}

The above labeling will be used to keep track of valuations $\val(x_{a}),\ a\in J$
of a quiver representation $x\in R(Q,\underline{1},\mathcal{O}_{\alpha})$.

\subsection{Stanley-Reisner rings and their Hilbert series \label{Sect/SRrings}}

In this section, we recall some results on Stanley-Reisner rings of
simplicial complexes and their Hilbert series. We focus on shellable
and Cohen-Macaulay simplicial complexes, as their Hilbert series enjoy
positivity properties that we exploit in our proof of Theorem \ref{Thm/IntroPositivityA}.
More specifically, we state shellability results by Björner on order
complexes of posets \cite{Bjo80}. We refer to \cite{Sta07} for more
details on Hilbert series of simplicial complexes.

Let us first define simplicial complexes and their Stanley-Reisner
rings. A(n abstract) simplicial complex $\Delta$ on a vertex set
$V=\{x_{1},\ldots,x_{n}\}$ is a collection of subsets of $V$ (called
faces of $\Delta$) such that (i) for all $x\in V$, $\{x\}\in\Delta$
and (ii) for any $F\in\Delta$, $G\subseteq F\Rightarrow G\in\Delta$.
The dimension of a face $F\in\Delta$ is $\sharp F-1$ and faces of
maximal dimension are called facets. Given a field $\KK$, the Stanley-Reisner
ring $\KK[\Delta]$ associated to $\Delta$ is:\[
\KK[x_1,\ldots,x_n]/I_{\Delta},
\] where $I_{\Delta}$ is the ideal generated by $x_{i_{1}}\ldots x_{i_{r}}$,
for $i_{1}<\ldots<i_{r}$ and $\{i_{1},\ldots,i_{r}\}\notin\Delta$.
Note that a $\KK$-basis of $\KK[\Delta]$ is given by monomials $x^{\mm}=x_{1}^{m_{1}}\ldots x_{n}^{m_{n}}$
whose support (i.e. $\supp(x^{\mm}):=\{1\leq i\leq n\ \vert\ m_{i}\ne0\}$)
is a face of $\Delta$.

Given a $\ZZ_{\geq0}$-grading of $\KK[\Delta]$, one can consider
its Hilbert series. We will consider $\ZZ_{\geq0}$-gradings for which
the Hilbert series is a specialisation of the fine Hilbert series
$\Hilb_{\Delta}\in\KK[[u_{1},\ldots,u_{n}]]$, associated to the natural
$\ZZ_{\geq0}^{n}$-grading of $\KK[\Delta]$ given by $\deg(x_{1}^{m_{1}},\ldots x_{n}^{m_{n}})=(m_{1},\ldots,m_{n})$.
The fine Hilbert series is defined as:\[
\Hilb_{\Delta}(u):=\sum_{\mm\in\ZZ_{\geq0}^{n}}\dim_{\KK}\KK[\Delta]_{\mm}\cdot u^{\mm}
=\sum_{F\in\Delta}\prod_{x_i\in F}\frac{u_i}{1-u_i}.
\] By \cite[Ch. I, Thm. 2.3.]{Sta07}, there exists $\mathbf{n}\in\ZZ_{\geq0}^{n}$
and $P(u)\in\ZZ[u_{1},\ldots,u_{n}]$ such that:\[
\Hilb_{\Delta}(u)=u^{\mathbf{n}}\cdot\frac{P(u)}{\prod_{i=1}^n(1-u_i)}.
\]

We now discuss some properties of simplicial complexes which imply
that, when specialised according to any $\ZZ_{\geq0}$-grading of
$\KK[\Delta]$, $\Hilb_{\Delta}$ admits a numerator with non-negative
coefficients. A simplicial complex $\Delta$ is called Cohen-Macaulay
if the graded ring $\KK[\Delta]$ is Cohen-Macaulay (see \cite[Ch. I, \S5]{Sta07}
for background). Consider the $\ZZ_{\geq0}$-grading of $\KK[\Delta]$
given by $\deg(x_{i})=d_{i}$. Then the Cohen-Macaulay property implies
that the specialised Hilbert series can be presented as a rational
fraction whose numerator has non-negative coefficients:

\begin{prop}{\cite[Ch. I, \S 5.2. - Thm. 5.10.]{Sta07}} \label{Prop/CMHilb}

In the above setting, if $\Delta$ is Cohen-Macaulay, then there exist
$Q(q)\in\ZZ_{\geq0}[q]$ and $e_{1},\ldots,e_{s}\geq1$ such that:\[
\Hilb_{\Delta}(q^{d_1},\ldots,q^{d_n})=\frac{Q(q)}{\prod_{i=1}^s(1-q^{e_i})}.
\]

\end{prop}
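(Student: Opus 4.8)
The plan is to realise $\Hilb_\Delta(q^{d_1},\dots,q^{d_n})$ as the graded Hilbert series of $\KK[\Delta]$ for the grading $\deg(x_i)=d_i$, and then read off the rational presentation from a free-module structure over a polynomial subring. First I would record the basic setup: since we may assume every $d_i\ge 1$, this grading is positive with $\KK[\Delta]_0=\KK$ and finite-dimensional graded pieces, and by the very definition of the fine Hilbert series the specialisation $u_i\mapsto q^{d_i}$ equals $\Hilb_{\KK[\Delta]}(q)$; moreover $\dim\KK[\Delta]=\dim\Delta+1=:d$ by the standard computation for Stanley--Reisner rings. I would also note that Cohen--Macaulayness is a property of $\KK[\Delta]$ localised at its irrelevant maximal ideal $\bigoplus_{k>0}\KK[\Delta]_k$, which is unchanged when the positive grading is altered, so the hypothesis is available for the grading $\deg(x_i)=d_i$.

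Next I would invoke graded Noether normalisation to produce a homogeneous system of parameters $\theta_1,\dots,\theta_d\in\KK[\Delta]$, of degrees $e_1,\dots,e_d\ge 1$, so that $B:=\KK[\theta_1,\dots,\theta_d]$ is a polynomial subring over which $\KK[\Delta]$ is a finite module. Because $\KK[\Delta]$ is Cohen--Macaulay of dimension $d=\dim B$ and finite over $B$, it is a maximal Cohen--Macaulay $B$-module, so the graded Auslander--Buchsbaum formula forces $\mathrm{pd}_B\KK[\Delta]=0$, i.e.\ $\KK[\Delta]$ is a finitely generated graded \emph{free} $B$-module. Picking a homogeneous $B$-basis $\eta_1,\dots,\eta_t$ of degrees $f_1,\dots,f_t\ge 0$ and using $\Hilb_B(q)=\prod_{i=1}^d(1-q^{e_i})^{-1}$ (as $B$ is a polynomial ring on generators of degrees $e_1,\dots,e_d$), I would conclude
\[
\Hilb_\Delta(q^{d_1},\dots,q^{d_n})=\Bigl(\sum_{j=1}^t q^{f_j}\Bigr)\cdot\prod_{i=1}^d\frac{1}{1-q^{e_i}},
\]
so that $Q(q):=\sum_{j=1}^t q^{f_j}\in\ZZ_{\ge 0}[q]$ and $s:=d$ do the job.

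The main obstacle is the passage to the nonstandard grading: in the classical formulation one works in the standard grading $\deg(x_i)=1$ over an infinite field, where a \emph{linear} system of parameters exists (via Reisner's criterion and Stanley's theorem on non-negativity of $h$-vectors of Cohen--Macaulay complexes), and here one must instead verify that $u_i\mapsto q^{d_i}$ genuinely computes the graded Hilbert series for $\deg(x_i)=d_i$ and that an h.s.o.p.\ exists in this grading over an arbitrary field $\KK$. An alternative that sidesteps grading issues is to first establish non-negativity of the $h$-vector in the standard grading and then deduce the specialised statement by a substitution argument; I would, however, follow the free-module route above, which is precisely the content of \cite[Ch. I, Thm. 5.10]{Sta07}.
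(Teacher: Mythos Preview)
Your proposal is correct and follows essentially the same approach that the paper indicates: the paper does not give its own proof but cites Stanley, and the accompanying remark explains that the presentation with numerator $Q(q)$ comes from choosing a regular sequence of homogeneous elements (i.e.\ a homogeneous system of parameters) in $\KK[\Delta]$, over which $\KK[\Delta]$ is free by Cohen--Macaulayness. This is exactly the free-module argument you outline, so your route coincides with the one sketched in the paper.
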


\begin{rmk}

Note that, in Proposition \ref{Prop/CMHilb}, $Q(q)$ may not be a
specialisation of $P(u)$. Indeed, such a presentation of $\Hilb_{\Delta}$
as a rational fraction depends on the choice of some elements in $\KK[\Delta]$.
In the first presentation (with numerator $P(u)$), we choose generators
$x_{i},\ 1\leq i\leq n$. In the second presentation (with numerator
$Q(q)$), we consider a regular sequence of homogeneous elements (of
degrees $e_{i},\ 1\leq i\leq s$) in $\KK[\Delta]$. It may be the
case that generators $x_{i},\ 1\leq i\leq n$ do not form a regular
sequence (the depth of $\KK[\Delta]$ may even be smaller than $n$).
See \cite[Ch. I, \S5]{Sta07} for the relation between $Q(q)$ and
the choice of a regular sequence.

\end{rmk}

While the Cohen-Macaulay condition has an abstract algebraic formulation
in terms of $\KK[\Delta]$, there is a combinatorial condition on
$\Delta$ itself which implies that $\Delta$ is Cohen-Macaulay. A
simplicial complex $\Delta$ is called shellable if:
\begin{enumerate}
\item $\Delta$ is pure i.e. its facets all have the same dimension $d$;
\item Facets of $\Delta$ can be ordered (let us call them $F_{1},\ldots,F_{s}$)
in such a way that for $2\leq i\leq s$, $F_{i}\cap\left(\bigcup_{j=1}^{i-1}F_{j}\right)$
is a nonempty union of faces of dimension $d-1$.
\end{enumerate}
The sequence of facets $F_{1},\ldots,F_{s}$ is called a shelling
of $\Delta$.

\begin{prop}{\cite[Ch. III, Thm. 2.5.]{Sta07}} \label{Prop/ShellCM}

If $\Delta$ is shellable, then it is Cohen-Macaulay.

\end{prop}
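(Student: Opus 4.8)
The plan is to verify \emph{Reisner's criterion}: the simplicial complex $\Delta$ is Cohen--Macaulay over $\KK$ if and only if, for every face $F\in\Delta$ (including $F=\emptyset$, for which $\mathrm{lk}_\Delta F=\Delta$), one has $\widetilde H_i(\mathrm{lk}_\Delta F;\KK)=0$ for all $i<\dim\mathrm{lk}_\Delta F$, where $\mathrm{lk}_\Delta F=\{G\in\Delta:G\cap F=\emptyset,\ G\cup F\in\Delta\}$. This criterion I take as known (it is established in \cite{Sta07} before the theorem in question). Write $d=\dim\Delta$; shellability forces $\Delta$ to be pure, so every facet has $d+1$ vertices.

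Given the criterion, the proof splits into two lemmas. The first is that \emph{the link of any face of a shellable complex is again shellable}, of pure dimension equal to $\dim\mathrm{lk}_\Delta F$. Concretely, if $F_1,\dots,F_s$ is a shelling of $\Delta$ and $F_{j_1},\dots,F_{j_k}$ (in this order) are the facets containing $F$, then $F_{j_1}\setminus F,\dots,F_{j_k}\setminus F$ is a shelling of $\mathrm{lk}_\Delta F$. One checks that these are exactly the facets of the link, that they all have dimension $d-\dim F-1$, and that the intersection $(F_{j_i}\setminus F)\cap\bigcup_{l<i}(F_{j_l}\setminus F)$ is obtained from $F_{j_i}\cap\bigcup_{j<j_i}F_j$ by deleting $F$ from those of its codimension-one faces that contain $F$ --- a nonempty union of codimension-one faces of $F_{j_i}\setminus F$, as required. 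By this lemma, to verify Reisner's criterion for $\Delta$ it suffices to prove the following for \emph{every} shellable complex and then apply it to each link $\mathrm{lk}_\Delta F$.

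The second lemma is that \emph{a shellable complex $\Delta$ of dimension $d$ has $\widetilde H_i(\Delta;\ZZ)=0$ for all $i<d$}; in fact it is homotopy equivalent to a wedge of $d$-spheres. I would prove this by induction on the number of facets of a shelling, with $\Delta_i:=F_1\cup\dots\cup F_i$; the base case $\Delta_1=F_1$ is a contractible simplex. For the inductive step, the shelling condition says $F_i\cap\Delta_{i-1}$ is a nonempty union of codimension-one faces of the simplex $F_i$, and it is either the whole boundary $\partial F_i$ or a proper such union. In the proper case, a proper nonempty union of facets of $\partial F_i$ omits at least one vertex of $F_i$, hence is a cone with that vertex as apex and therefore contractible; Mayer--Vietoris for $\Delta_i=\Delta_{i-1}\cup F_i$, with both $F_i$ and the intersection contractible, gives $\widetilde H_*(\Delta_i)\cong\widetilde H_*(\Delta_{i-1})$. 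In the remaining case $F_i\cap\Delta_{i-1}=\partial F_i\cong S^{d-1}$, Mayer--Vietoris with $F_i$ contractible shows $\widetilde H_n(\Delta_i)\cong\widetilde H_n(\Delta_{i-1})$ for $n<d$ while $\widetilde H_d$ picks up a single extra copy of $\ZZ$. In both cases the reduced homology remains zero below degree $d$, closing the induction.

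Combining the lemmas completes the argument: for each $F\in\Delta$ the link $\mathrm{lk}_\Delta F$ is shellable of pure dimension $\dim\mathrm{lk}_\Delta F$ by the first lemma, so by the second lemma $\widetilde H_i(\mathrm{lk}_\Delta F;\KK)=0$ for $i<\dim\mathrm{lk}_\Delta F$ (integral vanishing yields vanishing over any coefficient field), which is Reisner's criterion; hence $\KK[\Delta]$ is Cohen--Macaulay. The main obstacle is the second lemma, and inside it the homotopy bookkeeping for attaching a facet --- in particular the observation that a proper nonempty union of facets of a simplex boundary is a cone, hence contractible, so that attaching along it does not change the homotopy type. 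A purely algebraic alternative (for $\KK$ infinite) would be to use the interval partition $\Delta=\bigsqcup_i[R(F_i),F_i]$ of the face poset coming from the shelling to produce an explicit linear system of parameters over which $\KK[\Delta]$ is a free module, but the topological route above seems cleaner to present here.
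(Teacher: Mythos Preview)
The paper does not prove this proposition; it is quoted from Stanley's book with a bare citation and no argument, so there is no in-paper proof to compare against.

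Your proof via Reisner's criterion is a correct and standard route. One phrasing slip to fix: in your second lemma you write that a proper nonempty union of facets of $\partial F_i$ ``omits at least one vertex of $F_i$''. That is false once two or more boundary facets are present (every vertex then appears). What you need, and clearly intend, is that it omits at least one \emph{facet} $F_i\setminus\{v\}$ of $\partial F_i$; then every facet that does occur contains $v$, so the union is a cone with apex $v$ and hence contractible. With that correction the Mayer--Vietoris induction goes through exactly as written. Your first lemma is also correct, though the one-line justification elides the nonemptiness check for the link's shelling intersections; that follows by applying the original shelling condition to the face $F$ itself inside $F_{j_i}$, which produces a codimension-one face of $F_{j_i}$ containing $F$ and hence a codimension-one face of $F_{j_i}\setminus F$ lying in an earlier link facet. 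These are cosmetic issues, not genuine gaps.
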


There is a particular class of simplicial complexes with well-studied
shellings: order complexes. Given a poset $\Pi$, the associated order
complex $\mathcal{O}(\Pi)$ has vertex set $\Pi$ and its faces are
chains $\{x_{1}<x_{2}<\ldots<x_{r}\}\subseteq\Pi$. In \cite{Bjo80},
Björner studies shellings of order complexes through the stronger
notion of lexicographic shellability. We recall some of his results
on order complexes of lattices.

Recall that a lattice is a poset where any pair $x,y\in\Pi$ admits
a greatest lower bound $x\wedge y$ and a lowest upper bound $x\vee y$
(see \cite[Ch. I.]{Bir73} for background). A lattice is called modular
if, for all $x,y,z\in\Pi$ such that $x\leq z$, $x\vee(y\wedge z)=(x\wedge y)\vee z$.
A lattice is called graded if there exists a strictly increasing function
$\rho:\Pi\rightarrow\ZZ_{\geq0}$ such that for $x,y\in\Pi$, $y$
covers\footnote{In a poset $\Pi$, $y$ is said to cover $x$ if $x\leq y$ and there
exist no $z\in\Pi$ such that $x<z<y$.} $x\ \Rightarrow\ \rho(y)=\rho(x)+1$. A modular lattice is graded
(see \cite[Ch II, \S 8]{Bir73}).

\begin{exmp} \label{Exmp/OrderCpx}

Given a set $S$, let us call $\Pi(S)$ the poset of subsets of $S$,
ordered by inclusion. $\Pi(S)$ is a modular lattice. Indeed, for
$A,B\in\Pi(S)$, $A\wedge B=A\cap B$ and $A\vee B=A\cup B$; moreover,
if $A\subseteq C$, then $A\cup(B\cap C)=(A\cup B)\cap C$. Note that
$\Pi(S)$ is graded by cardinality $\rho:A\mapsto\sharp A$.

\end{exmp}

We collect the following results from \cite{Bjo80}:

\begin{prop} \label{Prop/LattShell}

Let $L$ be a finite lattice. If $L$ is graded by $\rho$, define,
for $S\subset\ZZ_{\geq0}$, $L_{S}:=\{x\in L\ \vert\ \rho(x)\in S\}$.
Then:
\begin{enumerate}
\item \cite[Thm. 3.1.]{Bjo80} If $L$ is modular, then $\mathcal{O}(L)$
is shellable.
\item \cite[Thm. 4.1.]{Bjo80} If $\mathcal{O}(L)$ is shellable, then $\mathcal{O}(L_{S})$
is shellable for any $S\subset\ZZ_{\geq0}$.
\end{enumerate}
\end{prop}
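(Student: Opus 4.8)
The plan is to deduce both statements from results in \cite{Bjo80} via the notion of \emph{lexicographic shellability} (EL-shellability), which is the technical engine behind everything here. First I would recall that a finite graded lattice $L$ (with bottom $\hat 0$ and top $\hat 1$) is said to be EL-shellable if its Hasse diagram admits an edge labelling such that in every interval $[x,y]$ there is a unique maximal chain along which the labels weakly increase, and this chain is lexicographically smallest among all maximal chains of $[x,y]$. Bj\"orner's foundational observation is that EL-shellability of $L$ implies shellability of the order complex $\mathcal{O}(\bar L)$ of the proper part $\bar L = L\setminus\{\hat 0,\hat 1\}$ (the maximal chains of $\bar L$, read off from the falling/rising chains, give the shelling order). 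For part 1, I would invoke \cite[Thm.~3.1.]{Bjo80}, which states precisely that a finite modular lattice is EL-shellable — the labelling is built from a fixed maximal chain $\hat 0 = c_0 \lessdot c_1 \lessdot \dots \lessdot c_n = \hat 1$, labelling a cover $x \lessdot y$ by the least index $i$ such that $y \le x \vee c_i$, and modularity guarantees the required uniqueness of weakly increasing maximal chains. Hence $\mathcal{O}(L)$ (meaning the order complex of the proper part, in the notation of this paper) is shellable.

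For part 2, the point is that rank-selection preserves shellability of order complexes of graded posets. I would cite \cite[Thm.~4.1.]{Bjo80}: if $\mathcal{O}(L)$ is shellable — equivalently, the EL-labelling exists — then for any $S \subseteq \ZZ_{\geq 0}$ the rank-selected subposet $L_S = \{x \in L \mid \rho(x) \in S\}$ again has shellable order complex. The mechanism is that an EL-labelling of $L$ restricts to an EL-labelling of $L_S$ (one takes, for a cover $x \lessdot y$ in $L_S$, the \emph{sequence} of labels of the unique rising maximal chain of $[x,y]$ in $L$, ordered lexicographically), so EL-shellability is inherited and Bj\"orner's implication EL-shellable $\Rightarrow$ order complex shellable applies again. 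I would note that here we only need the statement, not the labelling, since both clauses are quoted verbatim from \cite{Bjo80}.

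The only genuine subtlety — and the step I would be most careful about — is a bookkeeping one: reconciling the paper's convention for $\mathcal{O}(\Pi)$ (the order complex whose faces are chains in $\Pi$, with no implicit removal of extremal elements) with Bj\"orner's convention, where shellability statements are naturally phrased for the proper part $\bar L$. In this excerpt $\mathcal{O}(L)$ already denotes the complex of chains in whatever poset $L$ is handed to it, and in the application (Theorem \ref{Thm/IntroPositivityA}) the relevant poset is $\Pi(Q_1)\setminus\{\emptyset,Q_1\}$, i.e.\ the proper part of the Boolean lattice $\Pi(Q_1)$, which by Example \ref{Exmp/OrderCpx} is modular and graded. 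So in practice one applies part 1 to $L = \Pi(Q_1)$ and reads the conclusion as shellability of the order complex of its proper part; part 2 then lets one further rank-select. I would make this translation explicit in one sentence so that the chain of citations lines up unambiguously, and otherwise the proof is simply the two citations. There is no real obstacle beyond this notational alignment, since the substantive work lives entirely in \cite{Bjo80}.
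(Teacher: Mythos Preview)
Your proposal is correct, but note that the paper itself gives no proof of this proposition whatsoever: it is stated purely as a pair of citations to \cite{Bjo80}, with no accompanying \texttt{proof} environment. So there is nothing to compare against on the paper's side --- the proposition functions as an imported black box.

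What you have written is effectively a sketch of Bj\"orner's own arguments (EL-labelling from a fixed maximal chain in the modular case; inheritance of EL-shellability under rank-selection), and that sketch is accurate. Your remark about the bookkeeping between $\mathcal{O}(L)$ and the order complex of the proper part $\bar L$ is well taken and is indeed the only place one has to be careful when matching the paper's conventions to Bj\"orner's; the paper sidesteps this by applying the proposition directly to $\Pi(Q_1)\setminus\{\emptyset,Q_1\}$ in Theorem~\ref{Thm/PositivityA}. If anything, your write-up is more detailed than necessary for a result the paper treats as a citation.
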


\subsection{Equivariant cohomology and mixed Hodge structures \label{Sect/EqCoh}}

In this section, we recall elementary facts on $\GL(\rr,\mathcal{O}_{\alpha})$-equivariant
cohomology of complex algebraic varieties and prove some computational
lemmas. We also recall the connection between Hodge theory of algebraic
varieties and point-counting over finite fields, as discussed for
instance in \cite[Appendix]{HRV08}.

\paragraph*{Mixed Hodge structures on $\GL(\rr,\mathcal{O}_{\alpha})$-equivariant
cohomology}

A mixed Hodge structure on a $\QQ$-vector space $V$ is the datum
of an increasing filtration $W_{\bullet}\subseteq V$ (the weight
filtration) and a decreasing filtration $F^{\bullet}\subseteq V_{\CC}$
(the Hodge filtration) satisfying certain compatibility conditions
(see \cite[Ch. 3]{PS08} for background). In particular, for any $n\in\ZZ$,
there is a splitting $\mathrm{gr}_{n}^{W}(V)_{\CC}=\bigoplus_{p+q=n}V^{p,q}$
such that $F^{p}\mathrm{gr}_{n}^{W}(V)_{\CC}=\bigoplus_{p'\geq p}V^{p',n-p'}$.
By definition, the Hodge numbers of $V$ are $h^{p,q}:=\dim_{\CC}V^{p,q}$.
A mixed Hodge structure is called pure of weight $n$ if $\mathrm{gr}_{n'}^{W}(V)_{\CC}=0$
for $n'\ne n$. Given two vector spaces $V_{1}$, $V_{2}$ endowed
with mixed Hodge structures, the tensor product $V_{1}\otimes V_{2}$
also carries a mixed Hodge structure.

In this paper, we will work with $\ZZ$-graded mixed Hodge structures
(in all examples below, the grading will keep track of cohomological
degree). A graded mixed Hodge structure $V^{\bullet}$ is called pure
of weight $n$ if $V^{k}$ is pure of weight $n+k$ for all $k\in\ZZ$.
Given two $\ZZ$-graded mixed Hodge structures $V_{1}^{\bullet}$,
$V_{2}^{\bullet}$, we define the tensor product $(V_{1}\otimes V_{2})^{\bullet}$
as follows:\[
(V_{1}\otimes V_{2})^k=\bigoplus_{k_1+k_2=k}V_1^{k_1}\otimes V_2^{k_2}.
\]We let $\QQ(n)$ be the pure Hodge structure of dimension 1 and type
$(-n,-n)$, concentrated in degree 0. We also denote by $\mathbb{L}$
the graded mixed Hodge structure on $\HH_{\mathrm{c}}^{\bullet}(\mathbb{A}^{1},\QQ)$
(see \cite[Ch. 4.]{PS08}). It is concentrated in degree $2$ and
$\HH_{\mathrm{c}}^{2}(\mathbb{A}^{1},\QQ)$ is pure of type $(1,1)$.
In other words, $\mathbb{L}=\QQ(-1)[-2]$. Following \cite{Dav17a},
we say that a graded mixed Hodge structure $V^{\bullet}$ is of Tate
type if there exist $a_{m,n}\in\ZZ_{\geq0},\ m,n\in\ZZ$ such that
$V^{\bullet}=\bigoplus_{m,n}\left(\mathbb{L}^{\otimes n}[m]\right)^{\oplus a_{m,n}}$.

A natural source of graded mixed Hodge structures is the (equivariant,
compactly supported) cohomology of algebraic varieties over $\CC$.
Given an algebraic variety $X$ acted on by a linear algebraic group
$G$, the compactly supported cohomology of the quotient stack $[X/G]$
can be defined using approximations of the Borel construction $(X\times E_{G})/G$
by algebraic varieties. We will follow \cite[\S 2.2.]{DM20} for the
construction of $\HH_{\mathrm{c}}^{\bullet}\left([X/G]\right)$. This
involves building a suitable geometric quotient by the group $G$,
which we now discuss in the case where $G=\GL(\rr,\mathcal{O}_{\alpha})$.

For $r\geq1$ and $N\geq1$ sufficiently large, we consider the open
subset $U_{r,N,\alpha}\subseteq\Hom_{\mathcal{O}_{\alpha}}(\mathcal{O}_{\alpha}^{\oplus r},\mathcal{O}_{\alpha}^{\oplus N})$
of injective morphisms. Viewing points of $M\in\Hom_{\mathcal{O}_{\alpha}}(\mathcal{O}_{\alpha}^{\oplus r},\mathcal{O}_{\alpha}^{\oplus N})$
as $N\times r$ matrices with coefficients in $\mathcal{O}_{\alpha}$,
$U_{r,N,\alpha}$ is defined by the condition that not all $r\times r$
minors of $M$ are zero modulo $t$. Given $\Delta$ a subset of $r$
lines, we denote by $U_{\Delta,\alpha}\subseteq U_{r,N,\alpha}$ the
open subset of matrices for which the minor associated to $\Delta$
is non-zero modulo $t$. $U_{r,N,\alpha}$ is endowed with a free
action of $\GL(r,\mathcal{O}_{\alpha})$ by right multiplication,
for which the $U_{\Delta,\alpha}$ form a $\GL(r,\mathcal{O}_{\alpha})$-invariant
open cover. We argue that this action has a geometric quotient (in
the sense of \cite[Def. 0.6.]{MFK94}), although the usual methods
of invariant theory are not available ($\GL(r,\mathcal{O}_{\alpha})$
being non-reductive).

\begin{lem} \label{Lem/BorelApprox}

The variety $U_{r,N,\alpha}$ admits a geometric quotient $U_{r,N,\alpha}\rightarrow\mathrm{Gr}_{r,N,\alpha}$,
where $\mathrm{Gr}_{r,N,\alpha}$ is the $\alpha$-th jet space of
the grassmannian $\mathrm{Gr}_{r,N}$.

\end{lem}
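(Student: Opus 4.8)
The plan is to realise $U_{r,N,\alpha}\to\mathrm{Gr}_{r,N,\alpha}$ as the image under the jet (Greenberg) functor of the classical presentation of the Grassmannian as a quotient of the Stiefel variety, and to exploit the fact that the latter is a \emph{Zariski-locally trivial} principal bundle, so that local triviality (hence the geometric quotient property) is transported through the jet functor. Write $J_{\alpha}$ for the $\alpha$-th jet space functor. I would first record the three properties of $J_{\alpha}$ needed (all in \cite[Ch. 3]{CLNS18}): it commutes with fibre products and finite products; it sends open immersions to open immersions, with $J_{\alpha}(U)=\pi^{-1}(U)$ for any open $U\subseteq X$, where $\pi\colon J_{\alpha}(X)\to J_{1}(X)=X$ is the truncation modulo $t$; and $J_{\alpha}(\GL_{r,\KK})=\GL(r,\mathcal{O}_{\alpha})$, $J_{\alpha}\bigl(\Hom_{\KK}(\KK^{\oplus r},\KK^{\oplus N})\bigr)=\Hom_{\mathcal{O}_{\alpha}}(\mathcal{O}_{\alpha}^{\oplus r},\mathcal{O}_{\alpha}^{\oplus N})$. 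Let $V_{r,N}\subseteq\Hom_{\KK}(\KK^{\oplus r},\KK^{\oplus N})=\Mat_{N\times r}(\KK)$ be the open subvariety of full-rank matrices (the non-compact Stiefel variety). Since a matrix over $\mathcal{O}_{\alpha}$ has all $r\times r$ minors divisible by $t$ if and only if its reduction modulo $t$ is not of full rank, the definition of $U_{r,N,\alpha}$ gives $U_{r,N,\alpha}=\pi^{-1}(V_{r,N})=J_{\alpha}(V_{r,N})$; and by definition $\mathrm{Gr}_{r,N,\alpha}=J_{\alpha}(\mathrm{Gr}_{r,N})$.

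Next I would recall that the classical quotient map $q\colon V_{r,N}\to\mathrm{Gr}_{r,N}$, $f\mapsto\Ima f$, is a Zariski-locally trivial principal $\GL(r,\KK)$-bundle for the action by precomposition (right multiplication on matrices): over the standard affine chart $W_{\Delta}\subseteq\mathrm{Gr}_{r,N}$ of subspaces projecting isomorphically onto the coordinates indexed by a size-$r$ row set $\Delta$, the canonical "graph" frame gives a $\GL(r,\KK)$-equivariant isomorphism $q^{-1}(W_{\Delta})\simeq W_{\Delta}\times\GL(r,\KK)$, and $q^{-1}(W_{\Delta})$ is precisely the locus $U_{\Delta}$ where the $\Delta$-minor is a unit. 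Applying $J_{\alpha}$ and using the properties above: $J_{\alpha}(U_{\Delta})=U_{\Delta,\alpha}$; the open sets $J_{\alpha}(W_{\Delta})$ cover $\mathrm{Gr}_{r,N,\alpha}$ (they are the $\pi$-preimages of the cover $\{W_{\Delta}\}$); and $J_{\alpha}(q)$ restricts to $\GL(r,\mathcal{O}_{\alpha})$-equivariant isomorphisms $U_{\Delta,\alpha}\simeq J_{\alpha}(W_{\Delta})\times\GL(r,\mathcal{O}_{\alpha})$, compatibly with the right $\GL(r,\mathcal{O}_{\alpha})$-action on $U_{r,N,\alpha}$ by right multiplication. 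Thus $J_{\alpha}(q)\colon U_{r,N,\alpha}\to\mathrm{Gr}_{r,N,\alpha}$ is a Zariski-locally trivial principal $\GL(r,\mathcal{O}_{\alpha})$-bundle. Since being a geometric quotient in the sense of \cite[Def. 0.6]{MFK94} is local on the target, and over each $J_{\alpha}(W_{\Delta})$ the map is the projection off a direct product — visibly a geometric quotient — gluing over the cover yields the assertion.

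The main point to get right is that, because $\GL(r,\mathcal{O}_{\alpha})$ is non-reductive, the quotient cannot be produced by invariant theory and one must instead argue entirely through the jet functor. Concretely, the two verifications that carry the weight are: that jets of open subschemes are computed as truncation-preimages, so that $U_{r,N,\alpha}=J_{\alpha}(V_{r,N})$ and the open cover $\{U_{\Delta,\alpha}\}$ in the statement is exactly $\{J_{\alpha}(U_{\Delta})\}$; and that the standard affine cover of $\mathrm{Gr}_{r,N}$ trivializes the Stiefel bundle $\GL(r,\KK)$-equivariantly, which is what makes $J_{\alpha}$ preserve local triviality. Once these are in place — together with $N\geq r$ so that $V_{r,N}\ne\emptyset$ — the remainder of the argument is formal.
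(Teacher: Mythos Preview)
Your proposal is correct and follows essentially the same strategy as the paper: both arguments apply the jet functor to the classical Zariski-locally trivial principal $\GL(r,\KK)$-bundle $V_{r,N}\to\mathrm{Gr}_{r,N}$, use functoriality and compatibility with open immersions to transport the local trivializations to depth $\alpha$, and conclude that the resulting map is a geometric quotient because this property is local on the base. Your write-up is in fact more explicit than the paper's about which properties of $J_{\alpha}$ are being invoked (commutation with products, $J_{\alpha}(U)=\pi^{-1}(U)$ for opens), but the underlying idea is identical.
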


\begin{proof}

When $\alpha=0$, $\mathrm{Gr}_{r,N}$ is the geometric quotient $U_{r,N,0}/\GL(r,\CC)$,
which can be defined using geometric invariant theory. The quotient
morphism $q:U_{r,N}=U_{r,N,0}\rightarrow\mathrm{Gr}_{r,N}$ is a $\GL(r,\CC)$-principal
bundle, which is trivial in restriction to the open subsets $V_{\Delta}=q(U_{\Delta})\subseteq\mathrm{Gr}_{r,N}$.
$\mathrm{Gr}_{r,N}$ can thus be constructed by glueing the open subsets
$V_{\Delta}$. Since the construction of jet spaces is functorial
and compatible with open immersions, $\mathrm{Gr}_{r,N,\alpha}$ can
be obtained by glueing the jet spaces $V_{\Delta,\alpha}$. We also
obtain a principal $\GL(r,\mathcal{O}_{\alpha})$-bundle $U_{r,N,\alpha}\rightarrow\mathrm{Gr}_{r,N,\alpha}$,
which is trivial in restriction to the open subsets $V_{\Delta,\alpha}$.
This shows that $U_{r,N,\alpha}\rightarrow\mathrm{Gr}_{r,N,\alpha}$
is a geometric quotient. \end{proof}

Now, let $G=\GL(\rr,\mathcal{O}_{\alpha})$ and $X$ a $G$-variety
over $\CC$. Then $U_{\rr,N,\alpha}:=\prod_{i\in Q_{0}}U_{r_{i},N,\alpha}$
admits a geometric quotient as well under the action of $G$. Following
\cite[\S 2.2.]{DM20}, we define:\[
\HH_{\mathrm{c}}^{k}([X/G]):=\HH_{\mathrm{c}}^{k+2\dim(U_{\rr,N,\alpha})}\left(X\times^{G}U_{\rr,N,\alpha}\right)\otimes\QQ(\dim(U_{\rr,N,\alpha})),
\] for $N$ large enough. This is shown to be independent of the choice
of $U_{\rr,N,\alpha}$, as long as a certain codimension assumption
is satisfied. The variety $X\times^{G}U_{\rr,N,\alpha}$ is the geometric
quotient of $X\times U_{\rr,N,\alpha}$ under the diagonal action.
This quotient is well-defined by \cite[Prop. 23]{EG98} (note that
we are in the case where the principal bundle $U_{\rr,N,\alpha}\rightarrow U_{\rr,N,\alpha}/\GL(\rr,\mathcal{O}_{\alpha})$
is Zariski-locally trivial).

\paragraph*{Computational lemmas}

We now collect some lemmas on equivariant cohomology, which will prove
useful in computing $\HH_{\mathrm{c}}^{\bullet}([\mu_{Q,\rr,\alpha}^{-1}(0)/\GL(\rr,\mathcal{O}_{\alpha})])$.
Unless specified otherwise, we assume that $G=\GL(\rr,\mathcal{O}_{\alpha})$.
We use mixed Hodge modules in some proofs - see \cite{Sai89,Sai90}
for background.

\begin{lem} \label{Lem/AffFib}

Suppose that $f:X\rightarrow Y$ is a $G$-equivariant (Zariski-locally
trivial) affine fibration of dimension $d$. Then:

\[
\HH_{\mathrm{c}}^{\bullet}([X/G])
\simeq
\mathbb{L}^{\otimes d}
\otimes
\HH_{\mathrm{c}}^{\bullet}([Y/G]).
\]

\end{lem}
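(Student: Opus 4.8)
The plan is to reduce the equivariant statement to a statement about ordinary compactly supported cohomology of the Borel-type approximations, and then invoke the well-known behaviour of $\HH^\bullet_{\mathrm c}$ under affine fibrations. Concretely, fix $N$ large enough so that the formula $\HH^k_{\mathrm c}([X/G]) = \HH^{k+2\dim U_{\rr,N,\alpha}}_{\mathrm c}(X\times^G U_{\rr,N,\alpha})\otimes\QQ(\dim U_{\rr,N,\alpha})$ applies simultaneously to $X$ and to $Y$ (the required codimension bound only gets better as $N$ grows, so a common $N$ exists). Since $f\colon X\to Y$ is $G$-equivariant, it induces a morphism $\bar f\colon X\times^G U_{\rr,N,\alpha}\to Y\times^G U_{\rr,N,\alpha}$ on the geometric quotients, and I would first check that $\bar f$ is again a Zariski-locally trivial affine fibration of the same relative dimension $d$. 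This is local on the base: over an open $V\subseteq Y$ where $f$ trivialises as $V\times\mathbb A^d\to V$, and shrinking so that the principal bundle $U_{\rr,N,\alpha}\to U_{\rr,N,\alpha}/G$ also trivialises, the induced map on quotients is visibly $(\text{open})\times\mathbb A^d\to(\text{open})$.

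Next I would apply the standard fact that for a Zariski-locally trivial affine fibration $g\colon E\to B$ of rank $d$ between complex varieties one has a canonical isomorphism $\HH^\bullet_{\mathrm c}(E)\simeq \HH^{\bullet-2d}_{\mathrm c}(B)\otimes\HH^{2d}_{\mathrm c}(\mathbb A^d)$ as graded mixed Hodge structures, i.e. $\HH^\bullet_{\mathrm c}(E)\simeq \mathbb L^{\otimes d}\otimes\HH^\bullet_{\mathrm c}(B)$ after the degree shift is absorbed into $\mathbb L$. At the level of mixed Hodge modules this is the statement that $g_!\QQ_E\simeq \QQ_B\otimes\HH^{2d}_{\mathrm c}(\mathbb A^d)[-2d]$, which follows from proper base change reducing to the case $E=B\times\mathbb A^d$ and the Künneth formula together with $\HH^\bullet_{\mathrm c}(\mathbb A^d)=\mathbb L^{\otimes d}$; the Zariski-local triviality guarantees the local computations glue (the transition functions act trivially on $\HH^{2d}_{\mathrm c}(\mathbb A^d)\simeq\QQ(-d)$ since any algebraic automorphism of $\mathbb A^d$ as an affine space over an open set acts by the trivial character on top compactly supported cohomology). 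Applying this to $\bar f$ gives $\HH^\bullet_{\mathrm c}(X\times^G U_{\rr,N,\alpha})\simeq\mathbb L^{\otimes d}\otimes\HH^\bullet_{\mathrm c}(Y\times^G U_{\rr,N,\alpha})$.

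Finally I would feed this back through the definition of equivariant cohomology: the degree shift by $2\dim U_{\rr,N,\alpha}$ and the Tate twist by $\QQ(\dim U_{\rr,N,\alpha})$ are the same on both sides and commute with tensoring by $\mathbb L^{\otimes d}$, so the isomorphism descends to $\HH^\bullet_{\mathrm c}([X/G])\simeq\mathbb L^{\otimes d}\otimes\HH^\bullet_{\mathrm c}([Y/G])$, and independence of $N$ (already established in Section \ref{Sect/EqCoh}) shows the result is canonical. The main obstacle, such as it is, is the bookkeeping at the first step: one must make sure the geometric quotient $X\times^G U_{\rr,N,\alpha}$ really exists and that $\bar f$ inherits Zariski-local triviality — but this is exactly the situation handled by \cite[Prop. 23]{EG98} together with Lemma \ref{Lem/BorelApprox}, since $U_{\rr,N,\alpha}\to\mathrm{Gr}_{\rr,N,\alpha}$ is Zariski-locally trivial, so there is no genuine difficulty, only care. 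An alternative, more streamlined route would avoid the approximations entirely and argue with equivariant mixed Hodge modules directly via $G$-equivariant proper pushforward, but the approximation argument is the most elementary and matches the framework already set up in the excerpt.
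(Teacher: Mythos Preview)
Your proposal is correct and follows essentially the same approach as the paper: reduce to the non-equivariant case by passing to the Borel approximations $X\times^G U_{\rr,N,\alpha}\to Y\times^G U_{\rr,N,\alpha}$, check (via \cite[Prop.~23]{EG98}) that this induced map is again a Zariski-locally trivial affine fibration of rank $d$, and then invoke the mixed Hodge module statement $f_!\underline{\QQ}_X\simeq\underline{\QQ}_Y\otimes\mathbb{L}^{\otimes d}$, verified locally by base change. Your write-up is more explicit than the paper's two-sentence sketch, but the architecture is identical.
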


\begin{proof}

For an affine fibration of algebraic varieties $f:X\rightarrow Y$,
the result can be deduced from the fact that the adjunction morphism
$\underline{\QQ}_{Y}\otimes\mathbb{L}_{Y}^{\otimes d}\rightarrow f_{!}\underline{\QQ}_{X}$
of (complexes of) mixed Hodge modules is an isomorphism. This in turn
can be checked for the underlying complexes of sheaves on trivializing
opens of $f$, using base change and the projection formula.

In the equivariant setting, one can check, using the construction
of $X\times^{G}U_{\rr,N,\alpha}$ from \cite[Prop. 23]{EG98}, that
$f$ induces an affine fibration $X\times^{G}U_{\rr,N,\alpha}\rightarrow Y\times^{G}U_{\rr,N,\alpha}$
of dimension $d$. \end{proof}

\begin{lem} \label{Lem/DepthChg}

Let $X$ be a $\GL(\rr,\mathcal{O}_{\alpha})$-variety such that the
normal subgroup $K_{\rr,\alpha}=\Ker(\GL(\rr,\mathcal{O}_{\alpha})\twoheadrightarrow\GL(\rr,\mathcal{O}_{\alpha-1}))$
acts trivially on $X$. Then:

\[
\HH_{\mathrm{c}}^{\bullet}([X/\GL(\rr,\mathcal{O}_{\alpha})])
\simeq
\mathbb{L}^{\otimes (-\rr\cdot\rr)}
\otimes
\HH_{\mathrm{c}}^{\bullet}([X/\GL(\rr,\mathcal{O}_{\alpha-1})]).
\]

\end{lem}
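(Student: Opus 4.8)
Assume $\alpha\geq2$, so that $K_{\rr,\alpha}$ is unipotent: since $t^{2(\alpha-1)}=0$ in $\mathcal{O}_{\alpha}$, the assignment $I+t^{\alpha-1}A\mapsto A$ identifies $K_{\rr,\alpha}$ with the additive group $\mathfrak{gl}(\rr,\CC)=\prod_{i}\Mat_{r_{i}}(\CC)$, of dimension $\rr\cdot\rr$ (for $\alpha=1$ the subgroup $K_{\rr,1}=\GL(\rr,\CC)$ is not unipotent and the statement is not applied). Write $G=\GL(\rr,\mathcal{O}_{\alpha})$, $\bar G=\GL(\rr,\mathcal{O}_{\alpha-1})$ and $\rho\colon G\twoheadrightarrow\bar G$ for reduction modulo $t^{\alpha-1}$, so $K_{\rr,\alpha}=\Ker\rho$; since $\rho$ is surjective, the hypothesis that $K_{\rr,\alpha}$ acts trivially on $X$ says precisely that the $G$-action on $X$ factors through $\bar G$. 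The plan is to compare the Borel constructions computing $\HH_{\mathrm{c}}^{\bullet}([X/G])$ and $\HH_{\mathrm{c}}^{\bullet}([X/\bar G])$: I claim they are linked by an affine fibration, after which Lemma \ref{Lem/AffFib} finishes the job. Fix once and for all $N\gg0$ large enough that both sides are computed by $U_{\rr,N,\alpha}$ and $U_{\rr,N,\alpha-1}$ respectively.

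First I would produce the fibration. Reduction modulo $t^{\alpha-1}$ of matrix entries gives a $\rho$-equivariant morphism $p\colon U_{\rr,N,\alpha}\to U_{\rr,N,\alpha-1}$ which respects the standard open covers. Since $K_{\rr,\alpha}$ acts trivially on $X$ and $p$ is $\rho$-equivariant, $(x,u)\mapsto(x,p(u))$ descends to a morphism of the associated geometric quotients \[ f\colon X\times^{G}U_{\rr,N,\alpha}\longrightarrow X\times^{\bar G}U_{\rr,N,\alpha-1}. \] Trivialising the principal bundles $U_{\rr,N,\alpha}\to\mathrm{Gr}_{\rr,N,\alpha}$ and $U_{\rr,N,\alpha-1}\to\mathrm{Gr}_{\rr,N,\alpha-1}$ over the standard charts (as in the proof of Lemma \ref{Lem/BorelApprox}), one checks that, Zariski-locally on the target, $f$ is isomorphic to $\mathrm{id}_{X}$ times the natural map $\mathrm{Gr}_{\rr,N,\alpha}\to\mathrm{Gr}_{\rr,N,\alpha-1}$ between these jet spaces; the latter is a Zariski-locally trivial affine fibration of dimension $d:=\sum_{i}r_{i}(N-r_{i})$, hence so is $f$. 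Note for later that $\dim U_{\rr,N,\alpha}-\dim U_{\rr,N,\alpha-1}-d=\sum_{i}\bigl(Nr_{i}-r_{i}(N-r_{i})\bigr)=\sum_{i}r_{i}^{2}=\rr\cdot\rr$. The two ingredients used here are exactly that $K_{\rr,\alpha}$ acts trivially on $X$ (needed to even define $f$) and that $G$ acts freely on $U_{\rr,N,\alpha}$ (so the quotients are honest varieties, cf. Lemma \ref{Lem/BorelApprox}).

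Finally I would apply Lemma \ref{Lem/AffFib} to $f$ (with trivial group) to obtain $\HH_{\mathrm{c}}^{\bullet}(X\times^{G}U_{\rr,N,\alpha})\simeq\mathbb{L}^{\otimes d}\otimes\HH_{\mathrm{c}}^{\bullet}(X\times^{\bar G}U_{\rr,N,\alpha-1})$, and then unwind the Tate twists in the definition $\HH_{\mathrm{c}}^{k}([X/G])=\HH_{\mathrm{c}}^{k+2\dim U_{\rr,N,\alpha}}(X\times^{G}U_{\rr,N,\alpha})\otimes\QQ(\dim U_{\rr,N,\alpha})$, and likewise for $\bar G$. Using $\mathbb{L}^{\otimes d}=\QQ(-d)[-2d]$ together with $\dim U_{\rr,N,\alpha}-d=\dim U_{\rr,N,\alpha-1}+\rr\cdot\rr$ from the previous paragraph, everything collapses to \[ \HH_{\mathrm{c}}^{k}([X/G])\simeq\HH_{\mathrm{c}}^{k+2\,\rr\cdot\rr}([X/\bar G])\otimes\QQ(\rr\cdot\rr), \] which is the asserted isomorphism $\HH_{\mathrm{c}}^{\bullet}([X/\GL(\rr,\mathcal{O}_{\alpha})])\simeq\mathbb{L}^{\otimes(-\rr\cdot\rr)}\otimes\HH_{\mathrm{c}}^{\bullet}([X/\GL(\rr,\mathcal{O}_{\alpha-1})])$ since $\mathbb{L}^{\otimes(-\rr\cdot\rr)}=\QQ(\rr\cdot\rr)[2\,\rr\cdot\rr]$. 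The step I expect to require the most care is the middle of the previous paragraph: verifying carefully that $f$ is, locally on the base, a relative jet projection of the stated dimension — this means being attentive to how the trivialisations over the Grassmannian charts interact with reduction modulo $t^{\alpha-1}$ — the rest being routine but twist-sensitive bookkeeping.
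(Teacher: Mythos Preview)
Your proof is correct and follows essentially the same route as the paper: both construct the map $f\colon X\times^{G}U_{\rr,N,\alpha}\to X\times^{\bar G}U_{\rr,N,\alpha-1}$ induced by reduction modulo $t^{\alpha-1}$, identify it Zariski-locally as an affine fibration of relative dimension $\sum_i r_i(N-r_i)$, and then apply Lemma~\ref{Lem/AffFib} together with the dimension bookkeeping $\dim U_{\rr,N,\alpha}-\dim U_{\rr,N,\alpha-1}=\sum_i r_iN$. The only cosmetic difference is that the paper first materialises the intermediate quotient $U'_{\rr,N,\alpha}:=U_{\rr,N,\alpha}/K_{\rr,\alpha}$ (so that $X\times^{G}U_{\rr,N,\alpha}\simeq X\times^{\bar G}U'_{\rr,N,\alpha}$) and then checks $U'_{\Delta,\alpha}\simeq U_{\Delta,\alpha-1}\times\mathbb{A}^{r(N-r)}$ chart-by-chart, which is exactly the local verification you flag as the delicate step.
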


\begin{proof}

For simplicity, assume that $Q$ has only one vertex. We claim that,
for $N$ large enough, the geometric quotient $U'_{r,N,\alpha}:=U_{r,N,\alpha}/K_{r,\alpha}$
is well defined and that the projection $U_{r,N,\alpha}\rightarrow U_{r,N,\alpha-1}$
induces an affine fibration \[
X\times^{\GL(r,\mathcal{O}_{\alpha})}U_{r,N,\alpha}\simeq X\times^{\GL(r,\mathcal{O}_{\alpha-1})}U'_{r,N,\alpha}\rightarrow X\times^{\GL(r,\mathcal{O}_{\alpha-1})}U_{r,N,\alpha-1}
\] of dimension $r(N-r)$. Then, since $\dim(U_{r,N,\alpha})=\dim(U_{r,N,\alpha-1})+rN$,
we obtain by Lemma \ref{Lem/AffFib}:\[
\HH_{\mathrm{c}}^{\bullet}(X\times^{\GL(r,\mathcal{O}_{\alpha})}U_{r,N,\alpha})
\otimes \mathbb{L}^{-\dim(U_{r,N,\alpha})}
\simeq
\HH_{\mathrm{c}}^{\bullet}(X\times^{\GL(r,\mathcal{O}_{\alpha-1})}U_{r,N,\alpha-1})
\otimes \mathbb{L}^{-\dim(U_{r,N,\alpha-1})} \otimes \mathbb{L}^{-r^2},
\] which yields the results when $N$ goes to infinity.

Let us now sketch a proof of the claims. First, we construct the geometric
quotient $U'_{r,N,\alpha}$ by glueing quotients of the open subsets
$U_{\Delta,\alpha}$ as above. Given a set $\Delta$ of $r$ lines,
consider the subset $U'_{\Delta,\alpha}\subset U_{\Delta,\alpha}$
of matrices $M(t)=\sum_{k}M_{k}\cdot t^{k}$ such that the lines of
$M_{\alpha}$ indexed by $\Delta$ are zero. Then the action of $N_{\alpha}$
induces an isomorphism $U_{\Delta,\alpha}\simeq U'_{\Delta,\alpha}\times N_{\alpha}$.
One can check that the varieties $U'_{\Delta,\alpha}$ glue, which
yields a geometric quotient $U'_{r,N,\alpha}$ as in the proof of
Lemma \ref{Lem/BorelApprox}.

Finally, we shortly describe the affine fibration $X\times^{\GL(r,\mathcal{O}_{\alpha-1})}U'_{r,N,\alpha}\rightarrow X\times^{\GL(r,\mathcal{O}_{\alpha-1})}U_{r,N,\alpha-1}$.
The isomorphisms $U'_{\Delta,\alpha}\simeq U_{\Delta,\alpha-1}\times\mathbb{A}^{r(N-r)}$
glue to a $\GL(r,\mathcal{O}_{\alpha-1})$-equivariant affine fibration
of dimension $r(N-r)$. One can then check that the following diagram
\[
\begin{tikzcd}[ampersand replacement=\&]
X\times U'_{r,N,\alpha} \ar[r]\ar[d] \& X\times^{\GL(r,\mathcal{O}_{\alpha-1})}U'_{r,N,\alpha} \ar[d] \\
X\times U_{r,N,\alpha-1} \ar[r] \& X\times^{\GL(r,\mathcal{O}_{\alpha-1})}U_{r,N,\alpha-1}
\end{tikzcd}
\] restricts to \[
\begin{tikzcd}[ampersand replacement=\&]
X\times V_{\Delta,\alpha-1}\times\GL(r,\mathcal{O}_{\alpha-1})\times\mathbb{A}^{r(N-r)}  \ar[r]\ar[d] \& X\times V_{\Delta,\alpha-1}\times\mathbb{A}^{r(N-r)} \ar[d] \\
X\times V_{\Delta,\alpha-1}\times\GL(r,\mathcal{O}_{\alpha-1}) \ar[r] \& X\times V_{\Delta,\alpha-1}
\end{tikzcd}
\] over $X\times V_{\Delta,\alpha-1}$ (see the proof of Lemma \ref{Lem/BorelApprox}
for the definition of $V_{\Delta,\alpha-1}$). \end{proof}

\begin{lem} \label{Lem/GrpChg}

Let $n\geq1$ and $G=\mathbb{G}_{\mathrm{m}}^{n}(\mathcal{O}_{\alpha})$.
Let $X$ be a $G$-variety. Then:\[
\HH_{\mathrm{c}}^{\bullet}([(X\times^{\mathbb{G}_{\mathrm{m}}^{n}(\mathcal{O}_{\alpha})}\mathbb{G}_{\mathrm{m}}^{n+1}(\mathcal{O}_{\alpha})/\mathbb{G}_{\mathrm{m}}^{n+1}(\mathcal{O}_{\alpha})])
\simeq
\HH_{\mathrm{c}}^{\bullet}([X/\mathbb{G}_{\mathrm{m}}^{n}(\mathcal{O}_{\alpha})]).
\]

\end{lem}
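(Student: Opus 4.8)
The plan is to reduce everything to the splitting $\mathbb{G}_{\mathrm{m}}^{n+1}(\mathcal{O}_{\alpha})\simeq\mathbb{G}_{\mathrm{m}}^{n}(\mathcal{O}_{\alpha})\times\mathbb{G}_{\mathrm{m}}(\mathcal{O}_{\alpha})$, where I regard $G:=\mathbb{G}_{\mathrm{m}}^{n}(\mathcal{O}_{\alpha})$ as the subgroup of $G':=\mathbb{G}_{\mathrm{m}}^{n+1}(\mathcal{O}_{\alpha})$ given by the first $n$ coordinates (any other split inclusion is handled the same way after relabeling). First I would identify the induced space: since $G$ acts on $G'$ by left translation through the first $n$ factors only, the associated bundle $X\times^{G}G'$ is $G'$-equivariantly isomorphic to $X\times\mathbb{G}_{\mathrm{m}}(\mathcal{O}_{\alpha})$, where $G'=G\times\mathbb{G}_{\mathrm{m}}(\mathcal{O}_{\alpha})$ acts by the original $G$-action on $X$ in the first factor and by translation of $\mathbb{G}_{\mathrm{m}}(\mathcal{O}_{\alpha})$ on itself in the second. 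In particular the last copy of $\mathbb{G}_{\mathrm{m}}(\mathcal{O}_{\alpha})$ acts freely, and dividing it out first gives an isomorphism of quotient stacks $[(X\times^{G}G')/G']\simeq[X/G]$. It then only remains to see that this isomorphism is compatible with the Borel-construction definition of compactly supported cohomology used in this section.

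For the cohomological comparison I would use the explicit approximations. For $G'$, take $U':=\prod_{i=1}^{n+1}U_{1,N,\alpha}=U\times U_{1,N,\alpha}$, where $U:=\prod_{i=1}^{n}U_{1,N,\alpha}$ is the approximation attached to $G$ and the $G'$-action on $U'$ is factored (the first $n$ coordinates act on $U$, the last one on $U_{1,N,\alpha}$). Using transitivity of the associated-bundle construction, $(X\times^{G}G')\times^{G'}U'\simeq X\times^{G}U'\simeq(X\times^{G}U)\times U_{1,N,\alpha}$, because $G$ acts on $U'$ through the first $n$ coordinates only. Hence, by the Künneth formula, $\HH_{\mathrm{c}}^{k}([(X\times^{G}G')/G'])$ is a direct sum of shifts and Tate twists of $\HH_{\mathrm{c}}^{a}(X\times^{G}U)\otimes\HH_{\mathrm{c}}^{b}(U_{1,N,\alpha})$.

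The computational input is $\HH_{\mathrm{c}}^{\bullet}(U_{1,N,\alpha})$. Since $U_{1,N,\alpha}$ is the complement inside $\mathcal{O}_{\alpha}^{\oplus N}\simeq\mathbb{A}^{N\alpha}$ of the linear subspace $(t\mathcal{O}_{\alpha})^{\oplus N}\simeq\mathbb{A}^{N(\alpha-1)}$ of codimension $N$, the excision long exact sequence shows that $\HH_{\mathrm{c}}^{\bullet}(U_{1,N,\alpha})$ is supported in the two degrees $2N\alpha$ and $2N(\alpha-1)+1$, where it equals $\mathbb{L}^{\otimes N\alpha}$ and $\mathbb{L}^{\otimes N(\alpha-1)}[-1]$ respectively. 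Plugging this into the Künneth decomposition and accounting for the degree shift and the Tate twist $\otimes\QQ(\dim U')$ in the definition: the top-degree class of $U_{1,N,\alpha}$ produces exactly the term $\HH_{\mathrm{c}}^{k+2\dim U}(X\times^{G}U)\otimes\QQ(\dim U)=\HH_{\mathrm{c}}^{k}([X/G])$, whereas the other class produces a twist of $\HH_{\mathrm{c}}^{k+2\dim U+2N-1}(X\times^{G}U)$, which vanishes for $N$ large since $X\times^{G}U$ has dimension $\dim X+\dim U-\dim G$ independent of $N$. This is exactly the range of $N$ for which the definition of $\HH_{\mathrm{c}}^{\bullet}$ of the stack applies, so letting $N\to\infty$ gives the claim.

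I expect no serious obstacle: the argument is formal, and the only mildly delicate points are (i) checking that the product family $U'=U\times U_{1,N,\alpha}$ is an admissible approximation for $G'$ compatible with the one chosen for $G$, which is immediate from the product definition of $U_{\rr,N,\alpha}$ over vertices, and (ii) the bookkeeping of shifts and Tate twists in the final Künneth step. One could instead short-circuit the computation by citing that $\HH_{\mathrm{c}}^{\bullet}$ of a quotient stack depends only on the stack up to isomorphism, together with the stack isomorphism of the first paragraph; the explicit route above is slightly safer given that the definition in use here is tied to a specific class of approximating varieties.
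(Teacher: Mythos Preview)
Your argument is correct and follows essentially the same route as the paper: both reduce to the isomorphism $(X\times^{G}G')\times^{G'}U'\simeq(X\times^{G}U)\times U_{1,N,\alpha}$, compute $\HH_{\mathrm{c}}^{\bullet}(U_{1,N,\alpha})$ via the open--closed decomposition of $\mathcal{O}_{\alpha}^{\oplus N}$, and conclude by K\"unneth together with the observation that the low-degree contribution of $U_{1,N,\alpha}$ falls below any fixed cohomological degree once $N$ is large. Your computation of the two nonzero degrees of $\HH_{\mathrm{c}}^{\bullet}(U_{1,N,\alpha})$ is in fact slightly more explicit than the paper's phrasing.
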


\begin{proof}

Let us call $U_{n,N,\alpha}$ the variety $U_{\rr,N,\alpha}$ for
$\rr=\underline{1}$. From the isomorphism\[
\left(
X\times^{\mathbb{G}_{\mathrm{m}}^{n}(\mathcal{O}_{\alpha})}\mathbb{G}_{\mathrm{m}}^{n+1}(\mathcal{O}_{\alpha})
\right)
\times^{\mathbb{G}_{\mathrm{m}}^{n+1}(\mathcal{O}_{\alpha})}U_{n+1,N,\alpha}
\simeq
\left(
X\times^{\mathbb{G}_{\mathrm{m}}^{n}(\mathcal{O}_{\alpha})}U_{n,N,\alpha}
\right)
\times U_{1,N,\alpha},
\] we obtain:\[
\begin{split}
\HH_{\mathrm{c}}^{\bullet}((X\times^{\mathbb{G}_{\mathrm{m}}^{n}(\mathcal{O}_{\alpha})}\mathbb{G}_{\mathrm{m}}^{n+1}(\mathcal{O}_{\alpha}))\times^{\mathbb{G}_{\mathrm{m}}^{n+1}(\mathcal{O}_{\alpha})}U_{n+1,N,\alpha})
\otimes \mathbb{L}^{-\dim(U_{n+1,N,\alpha})}
&  \\
\simeq \left(\HH_{\mathrm{c}}^{\bullet}(X\times^{\mathbb{G}_{\mathrm{m}}^{n}(\mathcal{O}_{\alpha})}U_{n,N,\alpha})
\otimes \mathbb{L}^{-\dim(U_{n,N,\alpha})}
\right)
\otimes
\left(
\HH_{\mathrm{c}}^{\bullet}(U_{1,N,\alpha})\otimes\mathbb{L}^{-\dim(U_{1,N,\alpha})}
\right)
. &
\end{split}
\] Let us examine $\HH_{\mathrm{c}}^{\bullet}(U_{1,N,\alpha})\otimes\mathbb{L}^{-\dim(U_{1,N,\alpha})}$.
Using the long exact sequence in compactly supported cohomology associated
to the open-closed decomposition $\Hom_{\mathcal{O}_{\alpha}}(\mathcal{O}_{\alpha},\mathcal{O}_{\alpha}^{\oplus N})=U_{1,N,\alpha}\sqcup\left(\Hom_{\mathcal{O}_{\alpha}}(\mathcal{O}_{\alpha},\mathcal{O}_{\alpha}^{\oplus N})\setminus U_{1,N,\alpha}\right)$
(see \cite[\S 5.5.2.]{PS08}), one can check that: (i) $\HH_{\mathrm{c}}^{\bullet}(U_{1,N,\alpha})\otimes\mathbb{L}^{-\dim(U_{1,N,\alpha})}$
is concentrated in nonpositive degrees (ii) its graded piece in degree
0 is $\QQ$ (with trivial mixed Hodge structure) (iii) it vanishes
in cohomological degrees $-2N$ to $-1$.

Since $\HH_{\mathrm{c}}^{\bullet}(X\times^{\mathbb{G}_{\mathrm{m}}^{n}(\mathcal{O}_{\alpha})}U_{n,N,\alpha})\otimes\mathbb{L}^{-\dim(U_{n,N,\alpha})}$
is supported in cohomological degree at most $\dim\left[X/\mathbb{G}_{\mathrm{m}}^{n}(\mathcal{O}_{\alpha})\right]$,
we get $\HH_{\mathrm{c}}^{j}([(X\times^{\mathbb{G}_{\mathrm{m}}^{n}(\mathcal{O}_{\alpha})}\mathbb{G}_{\mathrm{m}}^{n+1}(\mathcal{O}_{\alpha})/\mathbb{G}_{\mathrm{m}}^{n+1}(\mathcal{O}_{\alpha})])\simeq\HH_{\mathrm{c}}^{j}([X/\mathbb{G}_{\mathrm{m}}^{n}(\mathcal{O}_{\alpha})])$
for any given $j$, by taking $N$ large enough. \end{proof}

\begin{lem} \label{Lem/Kunneth}

Let $X$ be a $\GL(\rr_{1},\mathcal{O}_{\alpha})$-variety and $Y$
be a $\GL(\rr_{2},\mathcal{O}_{\alpha})$-variety. Then:

\[
\HH_{\mathrm{c}}^{\bullet}([(X\times Y)/(\GL(\rr_{1},\mathcal{O}_{\alpha})\times\GL(\rr_{2},\mathcal{O}_{\alpha}))])
\simeq
H_{\mathrm{c}}^{\bullet}([X/\GL(\rr_{1},\mathcal{O}_{\alpha})])
\otimes
H_{\mathrm{c}}^{\bullet}([Y/\GL(\rr_{2},\mathcal{O}_{\alpha})]).
\]

\end{lem}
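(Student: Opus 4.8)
The plan is to reduce the claim to the ordinary K\"unneth formula for compactly supported cohomology of complex algebraic varieties, via the Borel-type approximations used to define $\HH_{\mathrm{c}}^{\bullet}$ of a quotient stack. Write $G_{1}=\GL(\rr_{1},\mathcal{O}_{\alpha})$ and $G_{2}=\GL(\rr_{2},\mathcal{O}_{\alpha})$. The key observation is that the product $U_{\rr_{1},N,\alpha}\times U_{\rr_{2},N,\alpha}$ serves as an admissible approximation for the classifying space of $G_{1}\times G_{2}$: by Lemma \ref{Lem/BorelApprox}, each factor carries a free $G_{i}$-action with a geometric quotient (a product of jet spaces of Grassmannians), so the product carries a free $(G_{1}\times G_{2})$-action with a geometric quotient as well, and the locus where freeness degenerates is contained in $(\text{bad}_{1}\times U_{\rr_{2},N,\alpha})\cup(U_{\rr_{1},N,\alpha}\times\text{bad}_{2})$, whose codimension still grows with $N$. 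Hence for $N$ large we may compute $\HH_{\mathrm{c}}^{\bullet}([(X\times Y)/(G_{1}\times G_{2})])$ using this approximation.

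Next I would identify the relevant geometric quotient. The diagonal $(G_{1}\times G_{2})$-action on $(X\times Y)\times(U_{\rr_{1},N,\alpha}\times U_{\rr_{2},N,\alpha})$ decouples, giving a canonical isomorphism
\[
(X\times Y)\times^{G_{1}\times G_{2}}(U_{\rr_{1},N,\alpha}\times U_{\rr_{2},N,\alpha})
\;\simeq\;
\bigl(X\times^{G_{1}}U_{\rr_{1},N,\alpha}\bigr)\times\bigl(Y\times^{G_{2}}U_{\rr_{2},N,\alpha}\bigr),
\]
where all three quotients exist by \cite[Prop. 23]{EG98} (the bundles being Zariski-locally trivial). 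Applying the ordinary K\"unneth isomorphism $\HH_{\mathrm{c}}^{\bullet}(V\times W)\simeq\HH_{\mathrm{c}}^{\bullet}(V)\otimes\HH_{\mathrm{c}}^{\bullet}(W)$, which is an isomorphism of graded mixed Hodge structures (for instance because $\HH_{\mathrm{c}}^{\bullet}$ on $\CC$-varieties is symmetric monoidal for the external product, as one checks with mixed Hodge modules), one obtains the claim up to normalization.

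Finally I would match the normalizations. By definition $\HH_{\mathrm{c}}^{k}([X/G_{1}])=\HH_{\mathrm{c}}^{k+2\dim(U_{\rr_{1},N,\alpha})}(X\times^{G_{1}}U_{\rr_{1},N,\alpha})\otimes\QQ(\dim(U_{\rr_{1},N,\alpha}))$, and similarly for $G_{2}$ and for $G_{1}\times G_{2}$ with the product approximation. Since $\dim(U_{\rr_{1},N,\alpha}\times U_{\rr_{2},N,\alpha})=\dim(U_{\rr_{1},N,\alpha})+\dim(U_{\rr_{2},N,\alpha})$ and $\QQ(a+b)\simeq\QQ(a)\otimes\QQ(b)$, combining this with the cohomological grading convention $(V_{1}\otimes V_{2})^{k}=\bigoplus_{k_{1}+k_{2}=k}V_{1}^{k_{1}}\otimes V_{2}^{k_{2}}$ yields exactly the asserted isomorphism after passing to the limit $N\to\infty$. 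The only genuinely delicate step is the very first one---verifying that the product approximation meets the codimension hypothesis that guarantees independence of the choice of $U$---but, as noted above, the bad locus in the product is swept out by products of a bad locus in one factor with the full other factor, so its codimension still tends to infinity with $N$.
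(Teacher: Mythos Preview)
Your proposal is correct and follows essentially the same route as the paper: use the product approximation $U_{\rr_{1},N,\alpha}\times U_{\rr_{2},N,\alpha}$, identify the decoupling isomorphism of geometric quotients, and apply the ordinary K\"unneth isomorphism for compactly supported cohomology (compatible with mixed Hodge structures). Your write-up is in fact more careful than the paper's---you spell out the codimension check for the product approximation and the matching of the Tate-twist and degree-shift normalizations---but the underlying argument is identical.
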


\begin{proof}

The isomorphism can be checked directly in each cohomological degree
- taking $N$ large enough - from the isomorphism\[
(X\times Y)\times^{\GL(\rr_{1},\mathcal{O}_{\alpha})\times\GL(\rr_{2},\mathcal{O}_{\alpha})}(U_{\rr_1,N,\alpha}\times U_{\rr_2,N,\alpha})
\simeq
(X\times^{\GL(\rr_{1},\mathcal{O}_{\alpha})}U_{\rr_1,N,\alpha})
\times
(Y\times^{\GL(\rr_{2},\mathcal{O}_{\alpha})}U_{\rr_2,N,\alpha})
\] and the Künneth isomorphism for compactly supported cohomology (which
is compatible with mixed Hodge structures - see \cite[Ch. 2, \S 3.2.]{CLNS18}).
\end{proof}

\begin{lem} \label{Lem/Strat}

Let $X$ be a $G$-variety, $Z\subseteq X$ a $G$-invariant closed
subvariety and $U=X\setminus Z$. Then there is a long exact sequence
of mixed Hodge structures:

\[
\ldots\rightarrow
\HH_{\mathrm{c}}^{j-1}([Z/G])\rightarrow
\HH_{\mathrm{c}}^j([U/G])\rightarrow
\HH_{\mathrm{c}}^j([X/G])\rightarrow
\HH_{\mathrm{c}}^j([Z/G])\rightarrow
\HH_{\mathrm{c}}^{j+1}([U/G]) \rightarrow \ldots
\] Moreover, if both $\HH_{\mathrm{c}}^{\bullet}([U/G])$ and $\HH_{\mathrm{c}}^{\bullet}([Z/G])$
are pure, then $\HH_{\mathrm{c}}^{\bullet}([X/G])$ is also pure.

\end{lem}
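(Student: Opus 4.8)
The plan is to obtain the long exact sequence by reducing the equivariant statement to the non-equivariant one via the finite-dimensional Borel approximation $X \times^G U_{\rr,N,\alpha}$ constructed above, and then to quote the standard long exact sequence for a closed-open decomposition of varieties. Concretely, fix $N$ large. The closed immersion $Z \hookrightarrow X$ and its open complement $U = X \setminus Z$ are $G$-equivariant, so after quotienting by the free $G$-action on $X \times U_{\rr,N,\alpha}$ (using the geometric quotient of Lemma \ref{Lem/BorelApprox} and \cite[Prop. 23]{EG98}) we get a closed immersion $Z \times^G U_{\rr,N,\alpha} \hookrightarrow X \times^G U_{\rr,N,\alpha}$ with open complement $U \times^G U_{\rr,N,\alpha}$. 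The first step is therefore to invoke the usual long exact sequence in compactly supported cohomology of complex algebraic varieties associated to such a decomposition (see \cite[\S 5.5.2.]{PS08}), which is a sequence of mixed Hodge structures. The second step is to apply the Tate twist $\otimes\, \QQ(\dim U_{\rr,N,\alpha})$ and the degree shift by $2\dim U_{\rr,N,\alpha}$ to every term, which by the definition of $\HH_{\mathrm{c}}^{\bullet}([-/G])$ recalled above turns this into the asserted long exact sequence for the stacks $[Z/G]$, $[U/G]$, $[X/G]$. Since Tate twist and shift are exact and compatible with mixed Hodge structures, exactness is preserved; one also notes that for $N$ large the relevant cohomology groups have stabilised, so the sequence is independent of $N$ in each fixed degree.

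For the purity claim, the second step is to run the long exact sequence degree by degree. Assume $\HH_{\mathrm{c}}^{\bullet}([U/G])$ is pure of some weight $n_U$ and $\HH_{\mathrm{c}}^{\bullet}([Z/G])$ is pure of some weight $n_Z$ (in the graded sense: $\HH_{\mathrm{c}}^k$ pure of weight $n+k$ for all $k$). One has to distinguish the case $n_U = n_Z$, where purity of $\HH_{\mathrm{c}}^{\bullet}([X/G])$ follows immediately because it is squeezed in a short exact sequence between two pure Hodge structures of the same weight in each degree, from the general case. In the general situation the argument is the standard weight-count: the connecting map $\HH_{\mathrm{c}}^{j}([Z/G]) \to \HH_{\mathrm{c}}^{j+1}([U/G])$ is a morphism of mixed Hodge structures, and a morphism between pure Hodge structures of distinct weights vanishes unless the weights coincide in that degree; one concludes that $\HH_{\mathrm{c}}^{j}([X/G])$ sits in a short exact sequence whose outer terms are each pure, of weights $n_U + j$ and $n_Z + j$ respectively, hence $\HH_{\mathrm{c}}^{j}([X/G])$ has weights in $\{n_U+j, n_Z+j\}$. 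In all applications in this paper the two weights agree (everything being of Tate type with weight equal to twice the cohomological degree shift), so I would state and use the lemma in that form; the cleanest phrasing is simply: if $\HH_{\mathrm{c}}^{\bullet}([U/G])$ and $\HH_{\mathrm{c}}^{\bullet}([Z/G])$ are both pure of the same weight $n$, then so is $\HH_{\mathrm{c}}^{\bullet}([X/G])$, which follows from the short exact sequences extracted from the long one.

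The only genuine subtlety — and the step I would be most careful about — is making sure the Borel approximation is legitimate here: that $X \times^G U_{\rr,N,\alpha}$, $Z \times^G U_{\rr,N,\alpha}$, $U \times^G U_{\rr,N,\alpha}$ are honest varieties (this is Lemma \ref{Lem/BorelApprox} together with \cite[Prop. 23]{EG98}, using that $U_{\rr,N,\alpha} \to \mathrm{Gr}_{\rr,N,\alpha}$ is Zariski-locally trivial) and that the codimension of the locus removed when passing from $\Hom$ to $U_{r,N,\alpha}$ grows with $N$, so that the stabilisation used to define $\HH_{\mathrm{c}}^{\bullet}([-/G])$ applies simultaneously to all three stacks in each fixed cohomological degree. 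Once that bookkeeping is in place, the long exact sequence and the purity statement are formal.
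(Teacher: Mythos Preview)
Your proof is correct and follows essentially the same approach as the paper: reduce to the non-equivariant long exact sequence of \cite[\S 5.5.2.]{PS08} via the Borel approximation $X\times^G U_{\rr,N,\alpha}$, then for purity use that the connecting maps are morphisms of mixed Hodge structures between pure structures of different weights and hence vanish. The paper's version is slightly crisper in that it implicitly takes both graded weights to be $0$ (so $\HH_{\mathrm{c}}^j$ is pure of weight $j$), which makes the vanishing of the connecting maps and the splitting of the resulting short exact sequences immediate via \cite[Cor.~2.12]{PS08}; your discussion of general $n_U,n_Z$ is unnecessary here but not wrong.
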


\begin{proof}

The long exact sequence can be derived from the following long exact
sequence of mixed Hodge structures (see \cite[\S 5.5.2.]{PS08}),
taking $N$ large enough for each cohomological step: \[
\ldots\rightarrow
\HH_{\mathrm{c}}^{j-1}(Z\times^GU_{\rr,N,\alpha})\rightarrow
\HH_{\mathrm{c}}^j(U\times^GU_{\rr,N,\alpha})\rightarrow
\HH_{\mathrm{c}}^j(X\times^GU_{\rr,N,\alpha})\rightarrow
\HH_{\mathrm{c}}^j(Z\times^GU_{\rr,N,\alpha})\rightarrow
\HH_{\mathrm{c}}^{j+1}(U\times^GU_{\rr,N,\alpha}) \rightarrow \ldots
\] 

If $\HH_{\mathrm{c}}^{\bullet}([U/G])$ and $\HH_{\mathrm{c}}^{\bullet}([Z/G])$
are pure, then for all $j\in\ZZ$, $\HH_{\mathrm{c}}^{j}([U/G])$
and $\HH_{\mathrm{c}}^{j}([Z/G])$ are pure of weight $j$. This implies
that the connecting morphisms of the long exact sequence vanish and
the short exact sequences obtained in each cohomological degree split
(see \cite[Cor. 2.12.]{PS08}). Therefore, $\HH_{\mathrm{c}}^{j}([X/G])\simeq\HH_{\mathrm{c}}^{j}([U/G])\oplus\HH_{\mathrm{c}}^{j}([Z/G])$
is pure of weight $j$, which proves the claim. \end{proof}

\paragraph*{E-series and counts of $\mathbb{F}_{q}$-points}

Finally, we recall some results relating Hodge numbers of an algebraic
variety to counts of its points over finite fields. These results
rely on a theorem by Katz \cite[Appendix, Thm. 6.1.2.]{HRV08}. Throughout,
we assume that $G=\GL(\rr,\mathcal{O}_{\alpha})$.

Let $X$ be a complex $G$-variety. The E-series of $[X/G]$ is defined
as:\[
E([X/G];x,y):=\sum_{p,q\in\ZZ}\left(\sum_{k\in\ZZ}(-1)^k\cdot h^{p,q}\left(\HH_{\mathrm{c}}^{k}([X/G])\right)\right)\cdot x^py^q\in\ZZ((u^{-1},v^{-1})).
\] Let us briefly justify that the E-series is a well-defined formal
Laurent series in $u^{-1},v^{-1}$. For a fixed couple $(p,q)$, $h^{p,q}\left(\HH_{\mathrm{c}}^{k}([X/G])\right)\ne0$
only if $p+q\leq k$ (see \cite[Ch. 5, Prop. 5.54.]{PS08}). As $\HH_{\mathrm{c}}^{\bullet}([X/G])$
is supported in degree at most $\dim\left([X/G]\right)$, the coefficient
of $x^{p}y^{q}$ boils down to a finite sum. Moreover, one can check
from \cite[Thm. 8.2.4.]{Del74} and \cite[Ch. 5, Def. 5.52.]{PS08}
that $\sum_{k}(-1)^{k}\cdot h^{p,q}\left(\HH_{\mathrm{c}}^{k}([X/G])\right)\ne0$
only if $p,q\leq\dim\left([X/G]\right)$, so we indeed obtain a Laurent
series. A similar reasoning shows that the $E$-series of a variety
$X$ is actually a polynomial in $u,v$.

Given a complex $G$-variety $X$, we may count points over finite
fields of some spreading-out of $X$ i.e. an $R$-scheme $\mathcal{X}$,
where $R\subseteq\CC$ is a finitely generated $\ZZ$-algebra and
$\mathcal{X}\otimes_{R}\CC\simeq X$. Such a ring $R$ admits ring
homomorphisms $R\rightarrow\FF_{q}$ for finite fields of large enough
characteristics (see for instance \cite[Ch. 1, Lem. 2.2.6.]{CLNS18}).
Following \cite[Appendix]{HRV08}, we call $X$ polynomial-count if
there exists a spreading-out $\mathcal{X}$ and a polynomial $P\in\mathbb{Q}[T]$
such that for any ring homomorphism $\varphi:R\rightarrow\FF_{q}$,
$\sharp\mathcal{X}_{\varphi}(\FF_{q^{r}})=P(q^{r})$ (for all $r\geq1$).
Note that $G$ is polynomial-count, with counting polynomial $P_{G}(T)=\prod_{i\in Q_{0}}t^{\alpha r_{i}^{2}}(1-T^{-1})\ldots(1-T^{-(r_{i}-1)})$.
The following proposition is a straightforward generalisation of \cite[Thm. 6.1.2.]{HRV08}.

\begin{prop} \label{Prop/E-seriesVSCountF_q}

If $X$ is polynomial-count, with counting polynomial $P_{X}$, then:\[
E([X/G];x,y)=\frac{P_X(xy)}{P_G(xy)}.
\]

\end{prop}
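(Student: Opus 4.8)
The plan is to reduce the statement to the non-equivariant version of Katz's theorem, exactly as in \cite[Appendix, Thm.~6.1.2.]{HRV08}, by working with a Borel-type approximation of the quotient stack. First I would recall that by definition $\HH_{\mathrm{c}}^{k}([X/G]) = \HH_{\mathrm{c}}^{k+2\dim(U_{\rr,N,\alpha})}\left(X\times^{G}U_{\rr,N,\alpha}\right)\otimes\QQ(\dim(U_{\rr,N,\alpha}))$ for $N$ large enough, and that this is independent of $N$ in any fixed cohomological degree once $N$ exceeds a bound depending on that degree. Since the $E$-series only involves finitely many cohomological degrees in each bidegree $(p,q)$ and is itself supported in a bounded range of bidegrees, I can fix one sufficiently large $N$ and compute everything on the honest variety $X_N := X\times^{G}U_{\rr,N,\alpha}$. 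The Tate twist by $\QQ(\dim(U_{\rr,N,\alpha}))$ shifts $(p,q)$ by $(-\dim U_{\rr,N,\alpha},-\dim U_{\rr,N,\alpha})$, so at the level of $E$-series it multiplies by $(xy)^{-\dim U_{\rr,N,\alpha}}$; hence $E([X/G];x,y) = (xy)^{-\dim U_{\rr,N,\alpha}}\cdot E(X_N;x,y)$.

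Next I would check that $X_N$ is polynomial-count and identify its counting polynomial. The geometric quotient $U_{\rr,N,\alpha}\to U_{\rr,N,\alpha}/G$ is a Zariski-locally trivial principal $G$-bundle (this is the content of Lemma \ref{Lem/BorelApprox} applied vertexwise, together with \cite[Prop.~23]{EG98}), so $X\times U_{\rr,N,\alpha} \to X_N$ is also a Zariski-locally trivial $G$-bundle. Spreading out over a finitely generated $\ZZ$-algebra $R\subseteq\CC$ and reducing mod $\varphi:R\to\FF_q$, the bundle stays Zariski-locally trivial, so counting $\FF_{q^r}$-points gives $\sharp X_N(\FF_{q^r}) = \sharp\bigl(X\times U_{\rr,N,\alpha}\bigr)(\FF_{q^r}) / \sharp G(\FF_{q^r})$. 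Now $U_{\rr,N,\alpha}$ is polynomial-count: it is an open subvariety of an affine space cut out by non-vanishing of minors mod $t$, and one computes $\sharp U_{r,N,\alpha}(\FF_{q^r}) = q^{r(\alpha-1)rN}\cdot\sharp U_{r,N}(\FF_{q^r})$, where $\sharp U_{r,N}(\FF_{q^r})$ is the standard polynomial counting injective $r\times N$ matrices; similarly $G=\GL(\rr,\mathcal{O}_\alpha)$ is polynomial-count with the polynomial $P_G$ recorded in the excerpt. Hence if $P_X$ is the counting polynomial of $X$, then $X_N$ has counting polynomial $P_{X_N}(T) = P_X(T)\cdot P_{U_{\rr,N,\alpha}}(T)/P_G(T)$.

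Finally I would invoke the (non-equivariant) theorem of Katz \cite[Appendix, Thm.~6.1.2.]{HRV08}, which says that for a polynomial-count variety the $E$-polynomial equals the counting polynomial evaluated at $xy$: thus $E(X_N;x,y) = P_{X_N}(xy) = P_X(xy)\cdot P_{U_{\rr,N,\alpha}}(xy)/P_G(xy)$. Combining with $E([X/G];x,y) = (xy)^{-\dim U_{\rr,N,\alpha}}\cdot E(X_N;x,y)$ and observing that $P_{U_{\rr,N,\alpha}}(xy) = (xy)^{\dim U_{\rr,N,\alpha}}\cdot\bigl(1 + O((xy)^{-1})\bigr)$ — more precisely, that the leading term of $P_{U_{\rr,N,\alpha}}$ is $(xy)^{\dim U_{\rr,N,\alpha}}$ and the lower-order terms are killed in the relevant range of degrees by the vanishing statement established in the proof of Lemma \ref{Lem/GrpChg} for $N\gg 0$ — the factor $(xy)^{-\dim U_{\rr,N,\alpha}}\cdot P_{U_{\rr,N,\alpha}}(xy)$ contributes $1$ in each fixed bidegree, leaving $E([X/G];x,y) = P_X(xy)/P_G(xy)$. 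The main obstacle I anticipate is the bookkeeping around the stabilisation in $N$: one must argue that the "error" factor $(xy)^{-\dim U_{\rr,N,\alpha}}P_{U_{\rr,N,\alpha}}(xy) - 1$ has no terms of bidegree $(p,q)$ with $p,q \geq -M$ once $N$ is large compared to $M$, which is precisely the content of the $2N$-step vanishing in Lemma \ref{Lem/GrpChg} and must be transported through the product with $P_X/P_G$; this is routine but is the point where the independence-of-$N$ hypothesis built into the definition of $\HH_{\mathrm{c}}^{\bullet}([X/G])$ is actually used.
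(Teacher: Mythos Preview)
Your approach is essentially the same as the paper's: reduce to Katz's theorem on the Borel approximation $X_N=X\times^{G}U_{\rr,N,\alpha}$, compute $P_{X_N}=P_X\cdot P_{U_{\rr,N,\alpha}}/P_G$ via the Zariski-locally trivial $G$-bundle, and then control the factor $(xy)^{-\dim U_{\rr,N,\alpha}}P_{U_{\rr,N,\alpha}}(xy)$ as $N$ grows. The paper phrases this directly as a limit $N\to\infty$ in the ring of Laurent series; you end up at the same place in your final paragraph.

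One correction: your sentence ``is itself supported in a bounded range of bidegrees, I can fix one sufficiently large $N$'' is not right. The $E$-series of $[X/G]$ is a genuine Laurent series in $(xy)^{-1}$ (bounded above by $\dim[X/G]$ but not below), so no single $N$ computes all its coefficients at once. This is exactly why the limiting/stabilisation argument you sketch at the end is necessary rather than merely a bookkeeping nuisance; the paper's formulation as $\lim_{N\to\infty}$ makes this explicit from the start. Once you drop that sentence and run the argument bidegree-by-bidegree (or as a formal limit), your proof is correct and matches the paper's.
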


\begin{proof}

By construction,\[
E([X/G];x,y)=
\underset{N\rightarrow +\infty}{\lim}
\left(
\frac{E\left( X\times^GU_{\rr,N,\alpha};x,y\right)}{(xy)^{\dim(U_{\rr,N,\alpha})}}
\right)
.
\] Arguing as in the proof of Lemma \ref{Lem/GrpChg}, one can check
that $U_{\rr,N,\alpha}$ is polynomial-count and that $\frac{P_{U_{\rr,N,\alpha}}(T)}{T^{\dim(U_{\rr,N,\alpha})}}$
is a power series in $T^{-1}$, congruent to $1$ modulo $T^{-\sum_{i}\binom{N}{r_{i}}}$.
Now, by \cite[Thm. 6.1.2.]{HRV08} and from the construction of $X\times^{G}U_{\rr,N,\alpha}$,
we obtain:\[
\frac{E\left( X\times^GU_{\rr,N,\alpha};x,y\right)}{(xy)^{\dim(U_{\rr,N,\alpha})}}
=
\frac{P_{U_{\rr,N,\alpha}}(xy)}{(xy)^{\dim(U_{\rr,N,\alpha})}}
\cdot
\frac{P_X(xy)}{P_G(xy)}
\underset{N\rightarrow +\infty}{\longrightarrow}
\frac{P_X(xy)}{P_G(xy)}.
\]\end{proof}

Finally, if $X$ is polynomial-count and $\HH_{\mathrm{c}}^{\bullet}([X/G])$
is pure, then it follows from Proposition \ref{Prop/E-seriesVSCountF_q}
that $\HH_{\mathrm{c}}^{\bullet}([X/G])$ is of Tate type, concentrated
in even degrees and:\[
E([X/G];x,y)=\sum_{k\in\ZZ}\dim\left(\HH_{\mathrm{c}}^{2k}([X/G])\right)\cdot (xy)^k.
\] In other words, $\HH_{\mathrm{c}}^{\bullet}([X/G])$ is determined
by its E-series, as $\HH_{\mathrm{c}}^{\bullet}([X/G])=P_{X}(\mathbb{L})\otimes F_{G}(\mathbb{L})$,
where for a Laurent series $F(T)=\sum_{i}a_{i}\cdot T^{i}\in\ZZ_{\geq0}((T^{-1}))$,
we define $F(\mathbb{L}):=\bigoplus_{i}\left(\mathbb{L}^{\otimes i}\right)^{\oplus a_{i}}$.
Note that $\HH_{\mathrm{c}}^{\bullet}(\mathrm{B}\GL(\rr,\mathcal{O}_{\alpha}))$=$\HH_{\mathrm{c}}^{\bullet}([\mathrm{pt}/\GL(\rr,\mathcal{O}_{\alpha})])$
is pure, with E-series $F_{G}(xy)=\prod_{i\in Q_{0}}\frac{(xy)^{-\alpha r_{i}^{2}}}{(1-(xy)^{-1})\ldots(1-(xy)^{-(r_{i}-1)})}$.
Purity accounts for the fact that many counting polynomials have non-negative
coefficients. For instance Theorem \ref{Thm/IntroPosToricKacPol}
admits a cohomological upgrade in the form of Theorem \ref{Thm/IntroCohIntgr}.

\section{Jet spaces of quiver moment maps and absolutely indecomposable representations
\label{Sect/KacPolvsMomMap}}

In this section, we prove several results relating counts of jets
over fibers of quiver moment maps and counts of absolutely indecomposable,
locally free representations of quivers over $\FF_{q}[t]/(t^{\alpha})$.
These results are analogous to the now well-understood relation between
Kac polynomials and counts of points on fibers of quiver moment maps,
see \cite{CBVB04,Moz11a,Dav23a}. Throughout this section, $\KK=\FF_{q}$.

Let $Q$ be a quiver. Our first result concerns counts of jets over
zero-fibers of quiver moment maps. We obtain a formula in the $\lambda$-ring
$\mathcal{V}[[t_{i},\ i\in Q_{0}]]$, which directly generalizes \cite[Thm. 5.1.]{Moz11a},
with a similar proof.

\begin{thm} \label{Thm/ExpFmlKacPol}

Let $Q$ be a quiver and $\alpha\geq1$. Then: \[
\sum_{\rr\in\NN^{Q_0}}
\frac{\sharp\mu_{Q,\rr,\alpha}^{-1}(0)}{\sharp\GL(\rr,\mathcal{O}_{\alpha})}
\cdot q^{\alpha\langle\rr,\rr\rangle}t^{\rr}
=
\Exp_{q,t}\left(
\sum_{\rr\in\NN^{Q_0}\setminus\{0\}}
\frac{A_{Q,\rr,\alpha}}{1-q^{-1}}\cdot t^{\rr}
\right).
\]

\end{thm}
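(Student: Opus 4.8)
The plan is to reduce the identity to Proposition~\ref{Prop/ExpFml}, which already expresses the generating series of $\vol_{\End}(\Rep_{\mathcal{O}_{\alpha}}(Q,\rr)^{\mathrm{l.f.}})$ as the desired plethystic exponential. So the entire content of the theorem becomes the ``stacky'' identity
\[
\frac{\sharp\mu_{Q,\rr,\alpha}^{-1}(0)}{\sharp\GL(\rr,\mathcal{O}_{\alpha})}\cdot q^{\alpha\langle\rr,\rr\rangle}
=
\vol_{\End}(\Rep_{\mathcal{O}_{\alpha}}(Q,\rr)^{\mathrm{l.f.}})
=
\sum_{[M]}\frac{\sharp\End(M)}{\sharp\Aut(M)}
\]
in each rank $\rr$, where the sum is over isomorphism classes of locally free representations of $Q$ over $\mathcal{O}_{\alpha}$ of rank $\rr$ (and likewise after base change to $\FF_{q^n}$, which is automatic). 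This is the higher-depth analogue of Mozgovoy's dimensional-reduction identity \cite[Thm.~5.1.]{Moz11a}, and I would follow the same strategy.

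First I would stratify $\mu_{Q,\rr,\alpha}^{-1}(0)$ by the isomorphism class of the underlying representation $M\in R(Q,\rr,\mathcal{O}_{\alpha})$ obtained by forgetting the $y$-component, i.e.\ via the forgetful map $\mu_{Q,\rr,\alpha}^{-1}(0)\to R(Q,\rr,\mathcal{O}_{\alpha})$, $(x,y)\mapsto x$. Proposition~\ref{Prop/MomMapExactSeq} identifies the fibre over a point $x$ with corresponding representation $M$ as an affine space, since the exact sequence
\[
0\to\Ext^1(M,M)^{\vee}\to R(Q^{\mathrm{op}},\rr,\mathcal{O}_{\alpha})\xrightarrow{\mu_{Q,\rr,\alpha}(x,\bullet)}\mathfrak{gl}(\rr,\mathcal{O}_{\alpha})\to\End(M)^{\vee}\to0
\]
shows the fibre (the preimage of $0$ under the rightmost visible map, equivalently $\mu(x,\bullet)^{-1}(0)$) is a torsor under $\Ext^1(M,M)^{\vee}$, hence has cardinality $q^{\alpha\dim_{\mathcal{O}_\alpha}\!\Ext^1(M,M)\cdot?}$ — more precisely $\sharp\Ext^1(M,M)$ as an $\FF_q$-vector space. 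Counting $\FF_q$-points of $R(Q,\rr,\mathcal{O}_\alpha)$ in the $\GL(\rr,\mathcal{O}_\alpha)$-orbit of $M$ as $\sharp\GL(\rr,\mathcal{O}_{\alpha})/\sharp\Aut(M)$, I get
\[
\sharp\mu_{Q,\rr,\alpha}^{-1}(0)
=\sum_{[M]}\frac{\sharp\GL(\rr,\mathcal{O}_{\alpha})}{\sharp\Aut(M)}\cdot\sharp\Ext^1(M,M).
\]
Dividing by $\sharp\GL(\rr,\mathcal{O}_{\alpha})$ and using that $\Rep_{\mathcal{O}_{\alpha}}(Q)^{\mathrm{l.f.}}$ has homological dimension $1$ (Geiss--Leclerc--Schröer; this is exactly what makes the resolution in Proposition~\ref{Prop/MomMapExactSeq} of length two), the Euler form of the category computes $\dim\Hom(M,M)-\dim\Ext^1(M,M)=\langle\rr,\rr\rangle$ over $\mathcal{O}_\alpha$, which over $\FF_q$ reads $\dim_{\FF_q}\End(M)-\dim_{\FF_q}\Ext^1(M,M)=\alpha\langle\rr,\rr\rangle$. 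Hence $\sharp\Ext^1(M,M)=q^{-\alpha\langle\rr,\rr\rangle}\sharp\End(M)$, and substituting gives
\[
\frac{\sharp\mu_{Q,\rr,\alpha}^{-1}(0)}{\sharp\GL(\rr,\mathcal{O}_{\alpha})}\cdot q^{\alpha\langle\rr,\rr\rangle}
=\sum_{[M]}\frac{\sharp\End(M)}{\sharp\Aut(M)}
=\vol_{\End}(\Rep_{\mathcal{O}_{\alpha}}(Q,\rr)^{\mathrm{l.f.}}).
\]
Finally I would run the same computation over every $\FF_{q^n}$ to get the identity in $\mathcal{V}$, assemble the generating series, and invoke Proposition~\ref{Prop/ExpFml} to conclude.

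The main obstacle, as the excerpt itself flags, is establishing that the Euler-characteristic bookkeeping in the category $\Rep_{\mathcal{O}_{\alpha}}(Q)^{\mathrm{l.f.}}$ behaves exactly as in the field case — i.e.\ that homological dimension is $1$ for locally free representations, that $\Ext^{\ge 2}$ vanishes, and that $\dim_{\FF_q}\End(M)-\dim_{\FF_q}\Ext^1(M,M)=\alpha\langle\rr,\rr\rangle$ holds on the nose. This is the one place the depth-$\alpha$ structure genuinely enters, and it is supplied by the projective resolution from \cite[Cor.~7.2.]{GLS17a} recorded in the proof of Proposition~\ref{Prop/MomMapExactSeq}; taking alternating dimensions in that length-two resolution, base-changed to $\FF_q$, yields the Euler form identity. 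Everything else — the orbit-counting, the torsor structure of moment-map fibres, passing to finite extensions, and the plethystic packaging — is formally identical to Mozgovoy's argument.
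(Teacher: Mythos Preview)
Your proposal is correct and follows essentially the same route as the paper: compute the fibre of the forgetful map $(x,y)\mapsto x$ via Proposition~\ref{Prop/MomMapExactSeq} to get $\sharp\Ext^1(M,M)$, use the same exact sequence (equivalently the GLS projective resolution) to read off $\sharp\Ext^1(M,M)=q^{-\alpha\langle\rr,\rr\rangle}\sharp\End(M)$, and conclude by Proposition~\ref{Prop/ExpFml}. One cosmetic remark: the fibre over $x$ is literally the kernel $\Ext^1(M,M)^{\vee}$ (a linear subspace), not merely a torsor under it, but this does not affect the count.
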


\begin{proof}

By Proposition \ref{Prop/MomMapExactSeq}, for a given $x\in R(Q,\rr,\mathcal{O}_{\alpha})$
corresponding to a locally free representation $M$, $\sharp\{y\in R(Q^{\mathrm{op}},\rr,\mathcal{O}_{\alpha}))\ \vert\ \mu_{Q,\rr,\alpha}(x,y)=0\}=\sharp\Ext^{1}(M,M)$.
Summing over all isomorphism classes $[M]$ of locally free representations
in rank $\rr$, we obtain:\[
\frac{\sharp\mu_{Q,\rr,\alpha}^{-1}(0)}{\sharp\GL(\rr,\mathcal{O}_{\alpha})}
=
\sum_{[M]}\frac{\sharp\Ext^{1}(M,M)}{\sharp\Aut(M)}.
\] Using Proposition \ref{Prop/MomMapExactSeq} again, we obtain that
$\sharp\Ext^{1}(M,M)=q^{-\alpha\langle\rr,\rr\rangle}\cdot\sharp\End(M)$
and so:\[
q^{\alpha\langle\rr,\rr\rangle}\cdot\frac{\sharp\mu_{Q,\rr,\alpha}^{-1}(0)}{\sharp\GL(\rr,\mathcal{O}_{\alpha})}
=
\sum_{[M]}\frac{\sharp\End(M)}{\sharp\Aut(M)}=\vol_{\End}(\Rep_{\mathcal{O}_{\alpha}}(Q,\rr)^{\mathrm{l.f.}}).
\] Thus the formula follows from Proposition \ref{Prop/ExpFml}. \end{proof}

As a consequence, we prove that the asymptotic count of jets over
$\mu_{Q,\rr}^{-1}(0)$ is related to the asymptotic count of absolutely
indecomposable, locally-free representations in rank $\rr$, when
$\rr=\underline{1}$. This solves a conjecture of Wyss \cite[Conj. 4.37.]{Wys17b}.
Recall the definition of these asymptotic counts (limits in $\mathcal{V}$
are computed coordinate-wise):\[
A_{Q}:=
\underset{\alpha\rightarrow +\infty}{\lim}
\left(q^{-\alpha b(Q)}\cdot A_{Q,\underline{1},\alpha}\right),
\]
\[
B_{\mu_{Q}}:=
\underset{\alpha\rightarrow +\infty}{\lim}
\left(q^{-\alpha(2\sharp Q_1-\sharp Q_0+1)}\cdot\sharp\mu_{Q,\underline{1},\alpha}^{-1}(0)\right).
\] Then we show:

\begin{cor} \label{Cor/RelAvsB}

Let $Q$ be a 2-connected quiver. Then: \[
\frac{B_{\mu_{Q}}}{(1-q^{-1})^{\sharp Q_0}}=\frac{A_{Q}}{1-q^{-1}}.
\]

\end{cor}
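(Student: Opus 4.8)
The plan is to extract the coefficient of $t^{\underline 1}$ from the identity of Theorem \ref{Thm/ExpFmlKacPol} and then let $\alpha\to\infty$ (coordinate-wise in $\mathcal V$).

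First I would compute the coefficient of $t^{\underline 1}$ on the right-hand side. Writing $F=\sum_{\rr\ne 0}\frac{A_{Q,\rr,\alpha}}{1-q^{-1}}t^\rr$, we have $\Exp_{q,t}(F)=\exp\!\big(\sum_{m\ge 1}\psi_m(F)/m\big)$, and every monomial occurring in $\psi_m(F)$ for $m\ge 2$ has a coordinate $\ge 2$; hence only $m=1$ contributes to $t^{\underline 1}$ and $[t^{\underline 1}]\Exp_{q,t}(F)=[t^{\underline 1}]\exp(F)$. Expanding $\exp(F)=\sum_k F^k/k!$ and noting that a product $t^{\rr_1}\cdots t^{\rr_k}$ equals $t^{\underline 1}$ precisely when $\big(\supp(\rr_i)\big)_{i}$ is a set partition of $Q_0$, one gets
\[
[t^{\underline 1}]\,\Exp_{q,t}(F)=\sum_{Q_0=I_1\sqcup\cdots\sqcup I_s}\ \prod_{j=1}^{s}\frac{A_{Q\restriction_{I_j},\underline 1,\alpha}}{1-q^{-1}},
\]
where I use that a representation of $Q$ with support contained in $I$ is the same as a representation of $Q\restriction_{I}$, so that $A_{Q,\mathbf 1_{I},\alpha}=A_{Q\restriction_{I},\underline 1,\alpha}$. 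On the left-hand side the coefficient of $t^{\underline 1}$ is the $\rr=\underline 1$ term; since $\sharp\GL(\underline 1,\mathcal O_\alpha)=q^{\alpha\sharp Q_0}(1-q^{-1})^{\sharp Q_0}$ and $\langle\underline 1,\underline 1\rangle=\sharp Q_0-\sharp Q_1$, it equals $\dfrac{\sharp\mu_{Q,\underline 1,\alpha}^{-1}(0)}{q^{\alpha\sharp Q_1}(1-q^{-1})^{\sharp Q_0}}$.

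Next I would single out the trivial partition $s=1$, so that the extracted identity reads
\[
\frac{\sharp\mu_{Q,\underline 1,\alpha}^{-1}(0)}{q^{\alpha\sharp Q_1}(1-q^{-1})^{\sharp Q_0}}=\frac{A_{Q,\underline 1,\alpha}}{1-q^{-1}}+\sum_{s\ge 2}\ \sum_{Q_0=I_1\sqcup\cdots\sqcup I_s}\ \prod_{j=1}^{s}\frac{A_{Q\restriction_{I_j},\underline 1,\alpha}}{1-q^{-1}},
\]
multiply through by $q^{-\alpha b(Q)}$ and let $\alpha\to\infty$. As $Q$ is $2$-connected, $b(Q)=\sharp Q_1-\sharp Q_0+1$, hence $b(Q)+\sharp Q_1=2\sharp Q_1-\sharp Q_0+1$ and the left-hand side tends to $\dfrac{B_{\mu_Q}}{(1-q^{-1})^{\sharp Q_0}}$; the $s=1$ term tends to $\dfrac{A_Q}{1-q^{-1}}$. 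Each $s\ge 2$ term can be rewritten as $q^{\alpha(\sum_j b(Q\restriction_{I_j})-b(Q))}\cdot\prod_{j=1}^{s}\frac{q^{-\alpha b(Q\restriction_{I_j})}A_{Q\restriction_{I_j},\underline 1,\alpha}}{1-q^{-1}}$, and the exponent is strictly negative by Proposition \ref{Prop/BettiNb}.3 applied to the $2$-connected quiver $Q$ and the non-trivial partition $Q_0=I_1\sqcup\cdots\sqcup I_s$.

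The one point that needs care is therefore the boundedness of each factor $q^{-\alpha b(Q\restriction_{I_j})}A_{Q\restriction_{I_j},\underline 1,\alpha}$ as $\alpha\to\infty$, the subquivers $Q\restriction_{I_j}$ not being $2$-connected in general. This is clear when $Q\restriction_{I_j}$ is disconnected, in which case $A_{Q\restriction_{I_j},\underline 1,\alpha}=0$; for $Q\restriction_{I_j}$ connected one invokes the explicit polynomial formulas of \cite{HLRV18,Wys17b} for $A_{Q,\underline 1,\alpha}$, or decomposes $Q\restriction_{I_j}$ into its $2$-connected blocks (whose Betti numbers sum to $b(Q\restriction_{I_j})$) and reduces to Wyss' convergence result. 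With this in hand the finitely many $s\ge 2$ terms vanish in the limit, and comparing the two sides yields the stated identity.
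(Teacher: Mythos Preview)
Your proof is correct and follows essentially the same route as the paper: extract the $t^{\underline{1}}$ coefficient from Theorem~\ref{Thm/ExpFmlKacPol}, multiply by $q^{-\alpha b(Q)}$, and use Proposition~\ref{Prop/BettiNb} to show that only the trivial partition survives in the limit. The paper handles the boundedness of the factors $q^{-\alpha b(Q\restriction_{I_j})}A_{Q\restriction_{I_j},\underline 1,\alpha}$ more directly by citing \cite[Cor.~4.35]{Wys17b}, which gives that $A_{Q,\underline{1}_I,\alpha}$ is a polynomial in $q$ of degree $\alpha\, b(Q\restriction_I)$; this replaces your block-decomposition argument in a single stroke.
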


\begin{proof}

Let us spell out Theorem \ref{Thm/ExpFmlKacPol} for $\rr=\underline{1}$.
Then:\[
q^{-\alpha(b(Q)+\sharp Q_0-\langle\rr,\rr\rangle)}
\cdot
\frac{\sharp\mu_{Q,\underline{1},\alpha}^{-1}(0)}{\prod_{i\in Q_0}(1-q^{-1})}
=
q^{-\alpha b(Q)}\cdot\sum_{Q_0=I_1\sqcup\ldots\sqcup I_s}\prod_{j=1}^s\frac{A_{Q,\underline{1}\restriction_{I_j},\alpha}}{1-q^{-1}}.
\] Note that $b(Q)+\sharp Q_{0}-\langle\rr,\rr\rangle=2\sharp Q_{1}-\sharp Q_{0}+1$.
Moreover, for $I\subseteq Q_{0}$, $A_{Q,\underline{1}_{I},\alpha}$
is a polynomial of degree $\alpha b(Q\restriction_{I})$ (see \cite[Cor. 4.35.]{Wys17b}).
By Proposition \ref{Prop/BettiNb}, only the term corresponding to
$I_{1}=Q_{0}$ contributes to the limit in the right-hand side. Therefore
we conclude:\[
\frac{B_{\mu_{Q}}}{(1-q^{-1})^{\sharp Q_0}}=\frac{A_{Q}}{1-q^{-1}}.
\] \end{proof}

\begin{rmk}

Wyss' conjecture actually relates the numerators of $A_{Q}$ and $B_{\mu_{Q}}$.
However, the conjecture does not hold as stated. The following quiver
is a counter-example (regardless of orientation):\[
\begin{tikzcd}[ampersand replacement=\&]
\bullet \ar[rr, dash, bend left] \& \& \bullet \ar[dl, dash, bend left]\ar[dl, dash, bend right] \\
\& \bullet \ar[ul, dash]\ar[ul, dash, bend left]\ar[ul, dash, bend right] \& 
\end{tikzcd}
.
\]

\end{rmk}

Finally, we generalise Crawley-Boevey and Van den Bergh's result relating
Kac polynomials and counts of $\FF_{q}$-points on the representation
variety of a \textit{deformed} preprojective algebra. Recall that
$\lambda\in\ZZ^{Q_{0}}$ is called generic with respect to a rank
vector $\rr\in\NN^{Q_{0}}$ if $\lambda\cdot\rr=0$ and $\lambda\cdot\rr'\ne0$
for all $0<\rr'<\rr$.

\begin{thm} \label{Thm/CountMomMapGenFib}

Let $\rr\in\NN^{Q_{0}}$ be an indivisible rank vector and $\lambda\in\ZZ^{Q_{0}}$
be generic with respect to $\rr$. Suppose that $\FF_{q}$ has characteristic
larger than $\sum_{i}\vert\lambda_{i}\vert\cdot r_{i}$. Then: \[
\frac{\sharp\mu_{Q,\rr,\alpha}^{-1}(t^{\alpha-1}\cdot\lambda)}{\sharp\GL(\rr,\mathcal{O}_{\alpha})}
=q^{-\alpha\langle\rr,\rr\rangle}\cdot\frac{A_{Q,\rr,\alpha}}{1-q^{-1}}.
\]

\end{thm}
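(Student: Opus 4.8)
The plan is to follow the proof of Theorem~\ref{Thm/ExpFmlKacPol}, but now to analyse the forgetful map $\pi\colon\mu_{Q,\rr,\alpha}^{-1}(t^{\alpha-1}\cdot\lambda)\to R(Q,\rr,\mathcal{O}_{\alpha})$, $(x,y)\mapsto x$, fibre by fibre. Fix $x\in R(Q,\rr,\mathcal{O}_{\alpha})$ and let $M$ be the associated representation; viewing $t^{\alpha-1}\cdot\lambda$ as the central element $(t^{\alpha-1}\lambda_{i}\cdot\Id_{r_{i}})_{i\in Q_{0}}$ of $\mathfrak{gl}(\rr,\mathcal{O}_{\alpha})$, the fibre $\pi^{-1}(x)$ is exactly $\mu_{Q,\rr,\alpha}(x,\bullet)^{-1}(t^{\alpha-1}\cdot\lambda)$. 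By Proposition~\ref{Prop/MomMapExactSeq}, the map $\mu_{Q,\rr,\alpha}(x,\bullet)$ is linear with kernel $\Ext^{1}(M,M)^{\vee}$ and image the orthogonal complement of $\End(M)\subseteq\mathfrak{gl}(\rr,\mathcal{O}_{\alpha})$ for the trace pairing. Hence $\pi^{-1}(x)\ne\emptyset$ if and only if $t^{\alpha-1}\cdot\lambda$ is orthogonal to $\End(M)$, and in that case $\pi^{-1}(x)$ is a coset of the subspace $\Ext^{1}(M,M)^{\vee}$, of cardinality $q^{-\alpha\langle\rr,\rr\rangle}\cdot\sharp\End(M)$ by the same computation as in the proof of Theorem~\ref{Thm/ExpFmlKacPol}. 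Using the explicit description of the trace pairing from Section~\ref{Sect/QuiverRep}, the orthogonality condition reads
\[
\sum_{i\in Q_{0}}\lambda_{i}\cdot\Tr\bigl(\overline{\xi_{i}}\bigr)=0\qquad\text{for every }\xi=(\xi_{i})_{i}\in\End(M),
\]
where $\overline{\xi_{i}}$ is the reduction of $\xi_{i}$ modulo $t$.

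The crucial step is to identify $\Ima(\pi)$ with the locus of those $x$ for which $M$ is absolutely indecomposable. If $M$ is absolutely indecomposable of rank $\rr$, then $\End(M)=\KK\cdot\Id\oplus\mathrm{Rad}(M)$ with $\mathrm{Rad}(M)$ consisting of nilpotent elements (Proposition~\ref{Prop/EndRings}); reductions modulo $t$ of nilpotents are again nilpotent, hence traceless at each vertex, while $c\cdot\Id$ contributes $c\cdot(\lambda\cdot\rr)$, which vanishes by genericity---so the displayed condition holds and $\pi^{-1}(x)\ne\emptyset$. Conversely, assume $\pi^{-1}(x)\ne\emptyset$ and write $M=\bigoplus_{k}M_{k}^{\oplus m_{k}}$ with the $M_{k}$ indecomposable, pairwise non-isomorphic of rank $\rr_{k}$ (Proposition~\ref{Prop/Krull-Schmidt}). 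If $M$ were decomposable, then each $\rr_{k}$ occurring would satisfy $0<\rr_{k}<\rr$; applying the displayed condition to the idempotent of $\End(M)$ projecting onto one copy of $M_{k}$ gives $\lambda\cdot\rr_{k}\equiv0$ in $\KK=\FF_{q}$, and since $|\lambda\cdot\rr_{k}|\le\sum_{i}|\lambda_{i}|\,r_{i}<\ch(\KK)$ this forces $\lambda\cdot\rr_{k}=0$ as an integer, contradicting genericity. Hence $M$ is indecomposable, and therefore absolutely indecomposable by the third part of Proposition~\ref{Prop/EndRings}, since $\rr$ is indivisible. This is where both the genericity of $\lambda$ and the characteristic hypothesis enter---the bound $\sum_{i}|\lambda_{i}|\,r_{i}<\ch(\KK)$ being precisely what transports genericity from $\ZZ$ to $\FF_{q}$---and I expect this to be the step requiring the most care; the rest is bookkeeping.

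Finally I would assemble the count. By orbit--stabiliser, the set of $x\in R(Q,\rr,\mathcal{O}_{\alpha})$ whose associated representation is isomorphic to a given $M$ has cardinality $\sharp\GL(\rr,\mathcal{O}_{\alpha})/\sharp\Aut(M)$, so summing the fibre cardinalities over $\Ima(\pi)$ yields
\[
\frac{\sharp\mu_{Q,\rr,\alpha}^{-1}(t^{\alpha-1}\cdot\lambda)}{\sharp\GL(\rr,\mathcal{O}_{\alpha})}=q^{-\alpha\langle\rr,\rr\rangle}\sum_{[M]}\frac{\sharp\End(M)}{\sharp\Aut(M)},
\]
where the sum runs over isomorphism classes of absolutely indecomposable, locally free representations of $Q$ over $\mathcal{O}_{\alpha}$ of rank $\rr$. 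For each such $M$ the ring $\End(M)$ is local with residue field $\KK=\FF_{q}$, so $\Aut(M)=\End(M)\setminus\mathrm{Rad}(M)$ has cardinality $(1-q^{-1})\cdot\sharp\End(M)$; hence each summand equals $(1-q^{-1})^{-1}$ and the whole sum equals $A_{Q,\rr,\alpha}(q)/(1-q^{-1})$, which is the asserted identity.
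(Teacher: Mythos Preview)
Your proof is correct and follows essentially the same route as the paper: analyse the fibres of the forgetful map $\pi$ via Proposition~\ref{Prop/MomMapExactSeq}, identify its image with the absolutely indecomposable locus using the pairing against idempotents (for one direction) and the decomposition $\End(M)=\KK\cdot\Id\oplus\mathrm{Rad}(M)$ (for the other), and then assemble the count using $\sharp\Aut(M)=(1-q^{-1})\cdot\sharp\End(M)$. The only stylistic difference is that you argue the ``absolutely indecomposable $\Rightarrow$ lifts'' direction over $\FF_{q}$ directly---observing that nilpotent endomorphisms have nilpotent, hence traceless, components modulo $t$---whereas the paper passes to $\overline{\FF_{q}}$; your version is if anything slightly cleaner.
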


\begin{proof}

Consider the projection map $\pi:\mu_{Q,\rr,\alpha}^{-1}(t^{\alpha-1}\cdot\lambda)\subseteq R(\overline{Q},\rr,\mathcal{O}_{\alpha})\rightarrow R(Q,\rr,\mathcal{O}_{\alpha})$.
We claim that, when $\lambda$ is generic, the image of $\pi$ coincides
with the constructible subset of absolutely indecomposable representations.
Moreover, given $x$ an $\FF_{q}$-point of $R(Q,\rr,\mathcal{O}_{\alpha})$
corresponding to a representation $M$, then $\pi^{-1}(x)$ has cardinality
$\sharp\Ext^{1}(M,M)$ by Proposition \ref{Prop/MomMapExactSeq}.
Therefore, we can compute $\sharp\mu_{Q,\rr,\alpha}^{-1}(t^{\alpha-1}\cdot\lambda)$
by summing over isomorphism classes $[M]$ of absolutely indecomposable
representations in rank $\rr$:\[
\frac{\sharp\mu_{Q,\rr,\alpha}^{-1}(t^{\alpha-1}\cdot\lambda)}{\sharp\GL(\rr,\mathcal{O}_{\alpha})}
=
\sum_{[M]}\frac{\sharp\Ext^1(M,M)}{\sharp\Aut(M)}.
\] Moreover, for $M$ absolutely indecomposable, $\sharp\Aut(M)=\frac{q-1}{q}\cdot\sharp\End(M)=q^{\alpha\langle\dd,\dd\rangle}\cdot\frac{q-1}{q}\cdot\sharp\Ext^{1}(M,M)$
by Propositions \ref{Prop/MomMapExactSeq} and \ref{Prop/EndRings}.
This yields:\[
\frac{\sharp\mu_{Q,\rr,\alpha}^{-1}(t^{\alpha-1}\cdot\lambda)}{\sharp\GL(\rr,\mathcal{O}_{\alpha})}
=q^{-\alpha\langle\rr,\rr\rangle}\cdot\frac{A_{Q,\rr,\alpha}}{1-q^{-1}}.
\]

Let us now prove the claim. We actually prove the following stronger
fact, by analogy with \cite[Thm. 3.3.]{CB01a}: given $x$ an $\FF_{q}$-point
of $R(Q,\rr,\mathcal{O}_{\alpha})$ corresponding to a representation
$M$, $x$ admits a lift in $\mu_{Q,\rr,\alpha}^{-1}(t^{\alpha-1}\cdot\lambda)$
if, and only if, any direct summand of $M$ of rank $\rr'\leq\rr$
satisfies $\lambda\cdot\rr'=0$.

Indeed, if $(x,y)$ is an $\FF_{q}$-point of $\mu_{Q,\rr,\alpha}^{-1}(t^{\alpha-1}\cdot\lambda)$
such that $\pi(x,y)=x$, then by Proposition \ref{Prop/MomMapExactSeq},
$\mu_{Q,\rr,\alpha}(x,y)=t^{\alpha-1}\cdot\lambda$ lies in the kernel
of $\mathfrak{gl}(\rr,\mathcal{O}_{\alpha})\rightarrow\End(M)^{\vee}$.
Therefore, pairing $\mu_{Q,\rr,\alpha}(x,y)=t^{\alpha-1}\cdot\lambda$
with the projection onto an $\rr'$-dimensional direct summand of
$M$ yields $\FF_{q}\ni\lambda\cdot\rr'=0$. This gives $\ZZ\ni\lambda\cdot\rr'=0$,
due to the bound on the characteristic of $\FF_{q}$.

Conversely, suppose that any direct summand of $M$ (call its rank
$\rr'$) satisfies $\lambda\cdot\rr'=0$. It is enough to show that,
for any indecomposable direct summand of $M$ (corresponding to an
$\FF_{q}$-point $x'$ of $R(Q,\rr',\mathcal{O}_{\alpha})$), $x'$
admits a lift in $\mu_{Q,\rr',\alpha}^{-1}(t^{\alpha-1}\cdot\lambda)$.
Thus we assume that $M$ is indecomposable. Switching to $\overline{\FF_{q}}$,
Proposition \ref{Prop/EndRings} implies that any $\xi\in\End(M)$
is the sum of a scalar $\xi_{0}\cdot\Id\in\overline{\mathbb{F}_{q}}$
and a nilpotent endomorphism, i.e. $\xi$ is the sum of $\xi_{0}\cdot\Id$
and a tuple of nilpotent matrices modulo $t$ (note that, over $\mathcal{O}_{\alpha}$,
not all nilpotent elements are traceless). Therefore, for all $\xi\in\End(M)$,
we get $\xi*(t^{\alpha-1}\lambda\cdot\Id)=\xi_{0}t^{\alpha-1}\lambda\cdot\rr=0$,
which implies, by Proposition \ref{Prop/MomMapExactSeq}, that $x$
lifts to $\mu_{Q,\rr,\alpha}^{-1}(t^{\alpha-1}\cdot\lambda)$. \end{proof}

\section{Positivity and purity in the toric setting \label{Section/PosPurity}}

In this section, we prove positivity results for counts of absolutely
indecomposable toric representations of quivers in higher depth. We
first prove positivity for asymptotic counts of quiver representations,
as conjectured in \cite[\S 4]{Wys17b}. We then proceed with the proof
of Theorem \ref{Thm/IntroPosToricKacPol} in fixed depth $\alpha\geq1$.
Finally, we prove a cohomological upgrade of Theorem \ref{Thm/ExpFmlKacPol}
for rank vectors $\rr\leq1$, in the spirit of \cite[Thm. A.]{Dav17a}.

\subsection{Proof of Wyss' first conjecture \label{Sect/PosA}}

Let $Q$ be a quiver and $\KK=\FF_{q}$. In \cite[\S 4]{Wys17b},
Wyss studied the asymptotic count of locally free, absolutely indecomposable
representations of $Q$ over $\mathcal{O}_{\alpha}$ in rank $\rr=\underline{1}$:\[
A_Q:=
\underset{\alpha\rightarrow +\infty}{\lim}
\left(q^{-\alpha b(Q)}\cdot A_{Q,\underline{1},\alpha}(q)\right).
\] The above sequence converges if, and only if, $Q$ is 2-connected
and the limit is an explicit rational fraction in $q$ (see \cite[Cor. 4.35.]{Wys17b}):\[
A_Q=
(1-q^{-1})^{b(Q)}\cdot
\sum_{E_1\subsetneq E_2\ldots\subsetneq E_s=Q_1}\prod_{j=1}^{s-1}\frac{1}{q^{b(Q)-b(Q\restriction_{E_j})}-1}.
\] The appearance of chains of subsets of $Q_{1}$ is reminiscent of
the order complex of $\Pi(Q_{1})$ and indeed, we show that $A_{Q}$
is essentially the Hilbert series of the Stanley Reisner ring associated
to $\Delta:=\mathcal{O}(\Pi(Q_{1}))$, suitably specialised. This
approach was inspired from \cite[Prop. 1.9.]{MV22}.

\begin{thm} \label{Thm/PositivityA}

Let $Q$ be a 2-connected quiver. Consider the Stanley-Reisner ring
$\QQ[\Delta]$ associated to the order complex $\Delta$ of the poset
$\Pi(Q_{1})\setminus\{\emptyset,Q_{1}\}$. Then:

\[
A_Q(q)=\frac{(1-q^{-1})^{b(Q)}}{1-q^{-b(Q)}}\cdot\Hilb_{\Delta}\left(u_E=q^{-(b(Q)-b(Q\restriction_E))}\right).
\] Moreover, $\Hilb_{\Delta}\left(u_{E}=q^{-(b(Q)-b(Q\restriction_{E}))}\right)$
can be written as a rational fraction whose numerator has non-negative
coefficients.

\end{thm}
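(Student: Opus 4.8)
The plan is to identify the rational-fraction formula for $A_Q(q)$ coming from \cite{Wys17b} with a specialisation of the fine Hilbert series of $\Delta = \mathcal{O}(\Pi(Q_1)\setminus\{\emptyset,Q_1\})$, and then to invoke the Cohen--Macaulay machinery recalled in Section \ref{Sect/SRrings} to extract a positive numerator. First I would rewrite Wyss' formula. Writing $c(E) := b(Q) - b(Q\restriction_E)$ for $E \subsetneq Q_1$, a chain $\emptyset = E_0 \subsetneq E_1 \subsetneq \cdots \subsetneq E_{s-1} \subsetneq E_s = Q_1$ contributes $\prod_{j=1}^{s-1} \frac{1}{q^{c(E_j)}-1}$. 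The non-trivial intermediate sets $E_1,\dots,E_{s-1}$ form exactly a (possibly empty) chain in the poset $\Pi(Q_1)\setminus\{\emptyset,Q_1\}$, i.e.\ a face $F = \{E_1,\dots,E_{s-1}\}$ of $\Delta$. So
\[
\sum_{E_1\subsetneq\cdots\subsetneq E_s=Q_1}\ \prod_{j=1}^{s-1}\frac{1}{q^{c(E_j)}-1}
= \sum_{F\in\Delta}\ \prod_{E\in F}\frac{1}{q^{c(E)}-1},
\]
where the empty face contributes $1$. On the other hand, setting $u_E = q^{-c(E)}$ one has $\frac{u_E}{1-u_E} = \frac{q^{-c(E)}}{1-q^{-c(E)}} = \frac{1}{q^{c(E)}-1}$, so by the face-sum formula for $\Hilb_\Delta$ recalled in Section \ref{Sect/SRrings},
\[
\Hilb_\Delta\!\left(u_E = q^{-c(E)}\right) = \sum_{F\in\Delta}\ \prod_{E\in F}\frac{u_E}{1-u_E} = \sum_{F\in\Delta}\ \prod_{E\in F}\frac{1}{q^{c(E)}-1}.
\]
Comparing with Wyss' formula $A_Q(q) = (1-q^{-1})^{b(Q)}\cdot\sum_{\text{chains}}\prod\frac{1}{q^{c(E_j)}-1}$, I get $A_Q(q) = (1-q^{-1})^{b(Q)}\cdot\Hilb_\Delta(u_E = q^{-c(E)})$. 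To match the stated formula I still need the extra factor $\frac{1}{1-q^{-b(Q)}}$; this should come from treating $E = \emptyset$ (or rather the ``top'' contribution) more carefully --- I expect the clean bookkeeping is to include the trivial subsets in the variable set, note $c(\emptyset) = b(Q)$, and absorb the resulting geometric factor, so that the displayed identity with denominator $1-q^{-b(Q)}$ holds; I would check the edge cases ($s=1$, i.e.\ $Q_1$ having no proper nonempty sublattice chain) by hand to fix the normalisation.

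Next I would establish positivity of the specialised Hilbert series. By Example \ref{Exmp/OrderCpx}, $\Pi(Q_1)$ is a modular (hence graded, by cardinality) lattice; removing the bottom $\emptyset$ and top $Q_1$ and grading by cardinality, Proposition \ref{Prop/LattShell}(1)--(2) shows $\Delta = \mathcal{O}(\Pi(Q_1)_S)$ for $S = \{1,\dots,\sharp Q_1 - 1\}$ is shellable, hence Cohen--Macaulay by Proposition \ref{Prop/ShellCM}. To apply Proposition \ref{Prop/CMHilb} I need the specialisation $u_E = q^{-c(E)}$ to be of the form $\deg(x_E) = d_E$ for \emph{positive} integers $d_E$; here I would invoke Proposition \ref{Prop/BettiNb}(3): since $Q$ is 2-connected, $b(Q) > \sum_j b(Q\restriction_{I_j})$ for every proper partition, and in particular $b(Q) > b(Q\restriction_E)$ for every proper nonempty $E \subsetneq Q_1$ (take the partition refining $E$ and its complement into singletons, or argue directly that $Q\restriction_E$ is a proper subquiver of the 2-connected $Q$), so $c(E) = b(Q) - b(Q\restriction_E) \geq 1$. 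Thus Proposition \ref{Prop/CMHilb} gives $\Hilb_\Delta(q^{-c(E)},\dots) = \frac{P(q^{-1})}{\prod_{i}(1 - q^{-e_i})}$ with $P \in \ZZ_{\geq 0}[q^{-1}]$ and $e_i \geq 1$, which after clearing denominators is a rational fraction in $q$ with non-negative numerator, as claimed.

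The main obstacle I anticipate is the normalisation bookkeeping: matching Wyss' $(1-q^{-1})^{b(Q)}$ prefactor and the $\frac{1}{1-q^{-b(Q)}}$ denominator against the ``raw'' face-sum for $\Delta$, and in particular making sure the poset is taken with exactly the right endpoints removed (the excerpt's Theorem \ref{Thm/PositivityA} uses $\Pi(Q_1)\setminus\{\emptyset,Q_1\}$, whereas a naive reading of Wyss' chains might suggest keeping $Q_1$). The cleanest route is probably to first prove the identity with the full lattice $\Pi(Q_1)$ (whose order complex is a simplex, with a transparent Hilbert series), then restrict the grading set $S$ via Proposition \ref{Prop/LattShell}(2) and track how the two extremal variables $u_\emptyset = q^{-b(Q)}$ and $u_{Q_1} = q^{0}$ enter --- the latter being the source of a potential pole that must be cancelled, which forces the denominator $1 - q^{-b(Q)}$ rather than a spurious $1-1 = 0$. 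Once the combinatorial identity is nailed down, the positivity half is essentially a direct citation of Propositions \ref{Prop/LattShell}, \ref{Prop/ShellCM}, \ref{Prop/CMHilb} together with the 2-connectedness input from Proposition \ref{Prop/BettiNb}.
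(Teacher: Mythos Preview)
Your approach is essentially the paper's: rewrite Wyss' chain formula as a face sum for $\Delta=\mathcal{O}(\Pi(Q_1)\setminus\{\emptyset,Q_1\})$, identify it with the specialised fine Hilbert series, then invoke Example \ref{Exmp/OrderCpx}, Proposition \ref{Prop/LattShell}, Proposition \ref{Prop/ShellCM} and Proposition \ref{Prop/CMHilb}. Your extra remark that 2-connectedness (via Proposition \ref{Prop/BettiNb}) forces $c(E)=b(Q)-b(Q\restriction_E)\geq1$, so that the grading in Proposition \ref{Prop/CMHilb} is genuinely positive, is a point the paper leaves implicit.

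The only place you are off is the normalisation, and the fix is simpler than you anticipate. In Wyss' formula the chains are $E_1\subsetneq\cdots\subsetneq E_s=Q_1$ with $E_1$ \emph{arbitrary}: it is not forced to equal $\emptyset$ as in your $E_0=\emptyset$ reading. Splitting the sum according to whether $E_1=\emptyset$ or $E_1\ne\emptyset$ gives
\[
\sum_{E_1\subsetneq\cdots\subsetneq E_s=Q_1}\prod_{j=1}^{s-1}\frac{1}{q^{c(E_j)}-1}
=\left(1+\frac{1}{q^{b(Q)}-1}\right)\sum_{\emptyset\subsetneq E_1\subsetneq\cdots\subsetneq E_{s-1}\subsetneq Q_1}\prod_{j=1}^{s-1}\frac{1}{q^{c(E_j)}-1},
\]
since $c(\emptyset)=b(Q)$, and $1+\tfrac{1}{q^{b(Q)}-1}=\tfrac{1}{1-q^{-b(Q)}}$. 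The remaining sum is now exactly over faces of $\Delta$, and your identification with $\Hilb_\Delta(u_E=q^{-c(E)})$ goes through verbatim. There is no need to pass through the full lattice $\Pi(Q_1)$ or worry about a spurious $u_{Q_1}=q^0$ pole.
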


\begin{proof}

The above formula for $A_{Q}$ can be rewritten as:\[
\begin{split}
A_Q & =
(1-q^{-1})^{b(Q)}\cdot\sum_{\emptyset\subsetneq E_1\subsetneq E_2\ldots\subsetneq E_{s-1}\subsetneq E_s=Q_1}\prod_{j=1}^{s-1}\frac{1}{q^{b(Q)-b(Q\restriction_{E_j})}-1}\times\left\{ 1+\frac{1}{q^{b(Q)}-1}\right\} \\
& =
\frac{(1-q^{-1})^{b(Q)}}{1-q^{-b(Q)}}\cdot
\sum_{\emptyset\subsetneq E_1\subsetneq E_2\ldots\subsetneq E_{s-1}\subsetneq E_s=Q_1}\prod_{j=1}^{s-1}\frac{(q^{-1})^{b(Q)-b(Q\restriction_{E_j})}}{1-(q^{-1})^{b(Q)-b(Q\restriction_{E_j})}} \\
& = \frac{(1-q^{-1})^{b(Q)}}{1-q^{-b(Q)}}\cdot\Hilb_{\Delta}\left(u_E=q^{-(b(Q)-b(Q\restriction_E))}\right).
\end{split}
\] Moreover, the order complex of $\Pi(Q_{1})\setminus\{\emptyset,Q_{1}\}$
is shellable, by Proposition \ref{Prop/LattShell} (see also Example
\ref{Exmp/OrderCpx}), so it is Cohen-Macaulay, by Proposition \ref{Prop/ShellCM}.
Therefore, up to substituting $q^{-1}$ for $q$, the result follows
directly from Proposition \ref{Prop/CMHilb}. \end{proof}

\begin{rmk}

Wyss' initial conjecture concerns the numerator of $B_{\mu_{Q}}(q)$,
which is explicitly defined in \cite[\S 4.6.]{Wys17b}. Using Corollary
\ref{Cor/RelAvsB} and Theorem \ref{Thm/PositivityA}, one can show
that this conjecture is true. We leave out the detailed computations
to avoid introducing too many additional notations. As the reader
may have noticed, $A_{Q}(q)$ can be related more directly to the
order complex of $\Pi(Q_{1})\setminus\{Q_{1}\}$ (instead of $\Pi(Q_{1})\setminus\{\emptyset,Q_{1}\}$),
which is also shellable. We chose to work with $\Pi(Q_{1})\setminus\{\emptyset,Q_{1}\}$
in order to extract a factor $(1-q^{-b(Q)})$ from the Hilbert series,
which turns out to be useful in the proof of \cite[Conj. 4.32.]{Wys17b}.

\end{rmk}

\subsection{Positivity of toric Kac polynomials \label{Sect/PosToricKacPol}}

We now turn to the proof of Theorem \ref{Thm/IntroPosToricKacPol}.
Recall from \cite[Prop. 4.34.]{Wys17b} that $A_{Q,\underline{1},\alpha}$
enjoys the following explicit formula:\[
A_{Q,\underline{1},\alpha}=
\sum_{\substack{E_1\subseteq\ldots\subseteq E_{\alpha}\subseteq Q_1 \\ c(Q\restriction_{E_{\alpha}})=1}}
(q-1)^{b(Q\restriction_{E_{\alpha}})}q^{\sum_{k=1}^{\alpha-1}b(Q\restriction_{E_{k}})}.
\] Set $\rr=\underline{1}$, $\KK=\FF_{q}$ and call $R(Q,\mathcal{O}_{\alpha}):=R(Q,\underline{1},\mathcal{O}_{\alpha})$
for short. Let $R(Q,\mathcal{O}_{\alpha})_{\mathrm{ind.}}\subseteq R(Q,\mathcal{O}_{\alpha})$
be the constructible subset of (absolutely) indecomposable representations\footnote{By Proposition \ref{Prop/EndRings}, a locally free representation
of rank $\underline{1}$ is indecomposable if, and only if, it is
absolutely indecomposable.}. Suppose also that $Q$ is connected, so that $R(Q,\mathcal{O}_{\alpha})_{\mathrm{ind.}}\ne\emptyset$.
The above formula is obtained by counting $\left(\mathcal{O}_{\alpha}^{\times}\right)^{Q_{0}}$-orbits
(of $\FF_{q}$-points) along a stratification of $R(Q,\mathcal{O}_{\alpha})$,
which is defined by prescribing valuations $\val(x_{a}),\ a\in Q_{1}$.
The datum of valuations $\val(x_{a})$ is encoded in the subsets $E_{1}\subseteq\ldots\subseteq E_{\alpha}\subseteq Q_{1}$
and there are $(q-1)^{b(Q\restriction_{E_{\alpha}})}q^{\sum_{k=1}^{\alpha-1}b(Q\restriction_{E_{k}})}$
orbits in the stratum associated to $E_{1}\subseteq\ldots\subseteq E_{\alpha}\subseteq Q_{1}$.
As the formula shows, the polynomial counting orbits in a given stratum
does not necessarily have non-negative coefficients. To fix this,
we consider a coarser stratification of $R(Q,\mathcal{O}_{\alpha})$
based on valued spanning trees and inspired from \cite{AMRV22}. Let
us describe the algorithm we use to assign a valued spanning tree
to $x\in R(Q,\mathcal{O}_{\alpha})$.

\paragraph*{Contraction-deletion algorithm}

Fix a total ordering of $Q_{1}$. Let $x\in R(Q,\mathcal{O}_{\alpha})_{\mathrm{ind}}$.
We build a valued spanning tree of $Q$, called $T_{x}$, by following
the algorithm below:
\begin{enumerate}
\item If $Q_{1}$ contains at least one non-loop arrow $a$ such that $x_{a}\in\mathcal{O}_{\alpha}^{\times}$,
call $a_{0}\in Q_{1}$ the largest such arrow and apply step 1 to
the induced representation $x'\in R(Q/a_{0},\mathcal{O}_{\alpha})$.
Otherwise, apply step 2 to $x$.
\item If $Q_{1}$ contains at least one loop, call $a_{0}\in Q_{1}$ the
largest loop and apply step 2 to the induced representation $x'\in R(Q\setminus a_{0},\mathcal{O}_{\alpha})$.
Otherwise, apply step 3 to $x$.
\item If $Q_{1}\ne\emptyset$ and $\val(x_{a})>0$ for all $a\in Q_{1}$,
apply step 1 to the representation $x'\in R(Q,\mathcal{O}_{\alpha-1})$
induced by the isomorphism of $\mathcal{O}_{\alpha}$-modules $\mathcal{O}_{\alpha-1}\simeq t\mathcal{O}_{\alpha}$.
\end{enumerate}
Note that the algorithm terminates in step 3, when $Q$ is a one-vertex
quiver with no loops. Indeed, since $x\in R(Q,\mathcal{O}_{\alpha})_{\mathrm{ind}}$,
the restriction of $Q$ to $\{a\in Q_{1}\ \vert\ x_{a}\ne0\}\subseteq Q_{1}$
is connected and the algorithm ends up contracting exactly the edges
of a spanning tree of $Q$, which we call $T_{x}$. Note that the
assumption $\val(x_{a})>0$ in step 3 is necessarily satisfied for
all $a\in Q_{1}$, as non-loop arrows (resp. loops) such that $x_{a}\in\mathcal{O}_{\alpha}^{\times}$
are contracted (resp. deleted) in step 1 (resp. step 2). We define
the valued spanning tree associated to $x$ as $T_{x}$, with labeling
$v:a\mapsto\val(x_{a})$.

\begin{exmp}

Let us illustrate the above algorithm on the following quiver:\[
\begin{tikzcd}[ampersand replacement=\&]
\bullet \arrow[loop, distance=2em, in=125, out=55, "6" description] \arrow[rr, bend left, "5" description] \& \& \bullet \arrow[ll, bend left, "4" description] \arrow[ld, bend left, "1" description] \\
 \& \bullet \arrow[lu, bend left, "3" description] \arrow[loop, distance=2em, in=305, out=235, "2" description] \&
\end{tikzcd}
\] and representation $x=(x_{1},x_{2},x_{3},x_{4},x_{5},x_{6})=(t,t^{2},t^{2},1,t,1)$.
Here, the order on $Q_{1}$ is the natural order on the integers.\[
\begin{tabular}{>{\centering\arraybackslash}p{4cm} >{\centering\arraybackslash}p{4cm} >{\centering\arraybackslash}p{4cm} >{\centering\arraybackslash}p{4cm} >{\centering\arraybackslash}p{4cm}}
\begin{tikzcd}[ampersand replacement=\&]
\bullet \arrow[loop, distance=2em, in=125, out=55, "1", swap] \arrow[rr, bend left, "t"] \& \& \bullet \arrow[ll, bend left, "1", blue] \arrow[ld, bend left, "t"] \\
 \& \bullet \arrow[lu, bend left, "t^2"] \arrow[loop, distance=2em, in=305, out=235, "t^2", swap] \&
\end{tikzcd} &
\begin{tikzcd}[ampersand replacement=\&]
\bullet \arrow[loop, distance=2em, in=35, out=325, "t",  swap] \arrow[loop, distance=2em, in=145, out=215, "1", blue] \arrow[d, bend left, "t"] \\
\bullet \arrow[u, bend left, "t^2"] \arrow[loop, distance=2em, in=305, out=235, "t^2", swap]
\end{tikzcd} &
\begin{tikzcd}[ampersand replacement=\&]
\bullet \arrow[loop, distance=2em, in=35, out=325, "t",  swap, blue] \arrow[loop, distance=2em, in=145, out=215, phantom] \arrow[d, bend left, "t"] \\
\bullet \arrow[u, bend left, "t^2"] \arrow[loop, distance=2em, in=305, out=235, "t^2", swap]
\end{tikzcd} &
\begin{tikzcd}[ampersand replacement=\&]
\bullet \arrow[d, bend left, "t"]  \\
\bullet \arrow[u, bend left, "t^2"] \arrow[loop, distance=2em, in=305, out=235, "t^2", swap, blue]
\end{tikzcd} \\
\text{Step 1} & \text{Step 2} & \text{Step 2} & \text{Step 2} \\
\begin{tikzcd}[ampersand replacement=\&]
\bullet \arrow[d, bend left, "t", blue]  \\
\bullet \arrow[u, bend left, "t^2", blue]
\end{tikzcd} &
\begin{tikzcd}[ampersand replacement=\&]
\bullet \arrow[d, bend left, "1", blue]  \\
\bullet \arrow[u, bend left, "t"]
\end{tikzcd} &
\begin{tikzcd}[ampersand replacement=\&]
\bullet \arrow[loop, distance=2em, in=305, out=235, "t", swap, blue]
\end{tikzcd} &
\begin{tikzcd}[ampersand replacement=\&]
\bullet
\end{tikzcd} \\
\text{Step 3} & \text{Step 1} & \text{Step 2} & \text{Stop}
\end{tabular}
\]

\end{exmp}

Given a valued spanning tree $T$, we define $R(Q,\mathcal{O}_{\alpha})_{T}:=\{x\in R(Q,\mathcal{O}_{\alpha})\ \vert\ T_{x}=T\}\subseteq R(Q,\mathcal{O}_{\alpha})_{\mathrm{ind.}}$.
Given that $T_{x}$ only depends on $\val(x_{a}),\ a\in Q_{1}$, $R(Q,\mathcal{O}_{\alpha})_{T}$
is an $\left(\mathcal{O}_{\alpha}^{\times}\right)^{Q_{0}}$-invariant
constructible subset of $R(Q,\mathcal{O}_{\alpha})_{\mathrm{ind.}}$.
Let us describe the strata $R(Q,\mathcal{O}_{\alpha})_{T}$ in more
details.

Set $T$ a valued spanning tree with valuation $v_{T}$, $x\in R(Q,\mathcal{O}_{\alpha})_{T}$
and $a\in Q_{1}$ a non-loop edge. If $a\in T$, then $\val(x_{a})=v_{T}(a)$
by construction. Suppose now that $a\notin T$. Then there exists
a unique (unoriented) path in $T$ joining the end vertices of $a$.
Let us denote by $T_{a}\subseteq Q_{1}$ the corresponding set of
edges and $v_{T_{a}}$ the largest valuation of $T_{a}$. When running
the contraction-deletion algorithm, $a$ is contracted into a loop
exactly when the smallest edge of $T_{a}$ with valuation $v_{T_{a}}$
is contracted i.e. when the last edge of $T_{a}$ is contracted. Let
us call $e_{T_{a}}$ this edge. This means that $\val(x_{a})\geq v_{T_{a}}$,
otherwise $a$ would have been contracted before $e_{T_{a}}$. Moreover,
if $a>e_{T_{a}}$, then $\val(x_{a})>v_{T_{a}}$, or again, $a$ would
have been contracted before $e_{T_{a}}$. One can check that these
inequalities imply $x\in R(Q,\mathcal{O}_{\alpha})_{T}$, so:\[
R(Q,\mathcal{O}_{\alpha})_{T}
=
\left\{
x\in R(Q,\mathcal{O}_{\alpha})\
\left\vert 
\begin{array}{ll}
\val(x_{a})=v_{T}(a), & a\in T \\
\val(x_{a})\geq v_{T_{a}}+\delta_{a>e_{T_{a}}}, & a\not\in T
\end{array}
\right.
\right\}
.
\]

Let us call $A_{Q,T,\alpha}$ the number of orbits (of $\FF_{q}$-points
in $R(Q,\mathcal{O}_{\alpha})_{\mathrm{ind.}}$) lying in $R(Q,\mathcal{O}_{\alpha})_{T}$.
Then summing over all valued spanning tree of $Q$, we obtain:\[
A_{Q,\underline{1},\alpha}=\sum_TA_{Q,T,\alpha}.
\] The advantage of considering coarser strata is that $A_{Q,T,\alpha}$
can be computed recursively as in \cite{AMRV22}, following the contraction-deletion
algorithm:

\begin{prop} \label{Prop/KacPolContDel}

Let $T$ be a valued spanning tree of $Q$. Denote by $Q',T'$ the
quiver and valued spanning tree obtained from $Q,T$ by running the
contraction-deletion algorithm for an arbitrary $x\in R(Q,\mathcal{O}_{\alpha})_{T}$
and stopping after the first occurence of step 3. Consider: 
\begin{itemize}
\item $n_{1}$ the number of loops of $Q$; 
\item $n_{2}$ the number of non-loop arrows $a\in Q_{1}$ which get contracted
into loops during step 1 and satisfying $a<e_{T_{a}}$;
\item $n_{3}$ the number of non-loop arrows $a\in Q_{1}$ which get contracted
into loops during step 1 and satisfying $a>e_{T_{a}}$.
\end{itemize}
Then:\[
A_{Q,T,\alpha}=q^{\alpha n_1 + \alpha n_2 + (\alpha-1)n_3}\cdot A_{Q',T',\alpha-1}.
\]

\end{prop}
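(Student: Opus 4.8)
The plan is to follow $(\mathcal{O}_\alpha^\times)^{Q_0}$-orbits of $\FF_q$-points through one full cycle of the algorithm — the block of step-1 contractions, then the block of step-2 deletions, then a single application of step 3 — and to show that each block multiplies the number of orbits lying in $R(Q,\mathcal{O}_\alpha)_T$ by an explicit power of $q$, so that the composite gives the stated formula. Throughout one uses the explicit description of $R(Q,\mathcal{O}_\alpha)_T$ recorded just before the statement.

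\emph{Step 1.} Since $x\in R(Q,\mathcal{O}_\alpha)_T$ forces $T_x=T$, the arrows contracted over the whole run are exactly those of $T$, and those contracted in the first block are precisely the edges $a\in T$ with $v_T(a)=0$; they form a subforest of $T$, so each is a genuine non-loop arrow at the instant it is contracted and no arrow outside $T$ is ever contracted. Contracting a unit non-loop arrow $a_0\colon i\to j$ does not change the orbit count: one uses the $\mathcal{O}_\alpha^\times$-factor at $j$ to normalise $x_{a_0}$ to $1$, and the stabiliser of that condition is the diagonal copy of $\mathcal{O}_\alpha^\times$ inside the $\{i,j\}$-factors, i.e. the single $\mathcal{O}_\alpha^\times$-factor attached to the vertex $[i]=[j]$ of $Q/a_0$; hence $(\mathcal{O}_\alpha^\times)^{Q_0}$-orbits on $R(Q,\mathcal{O}_\alpha)_T$ correspond bijectively to $(\mathcal{O}_\alpha^\times)^{(Q/a_0)_0}$-orbits on the image stratum of $R(Q/a_0,\mathcal{O}_\alpha)$. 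Iterating over all $a\in T$ with $v_T(a)=0$ produces a quiver $\widetilde Q$ with valued spanning tree $\widetilde T$ (the image of $T$, valuations unchanged), with $A_{Q,T,\alpha}=A_{\widetilde Q,\widetilde T,\alpha}$.

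\emph{Step 2.} Every loop of $\widetilde Q$ is either a loop of $Q$ ($n_1$ of them) or an arrow $a\notin T$ whose connecting path $T_a$ had $v_{T_a}=0$ (so $e_{T_a}$ is then the smallest edge of $T_a$); by contrast every non-loop arrow of $\widetilde Q$ has positive valuation on our stratum. The torus acts trivially on each loop coordinate, and from the description of the stratum the value $x_a$ ranges freely over $\mathcal{O}_\alpha$ when $a$ is a loop of $Q$ or satisfies $a<e_{T_a}$ ($q^\alpha$ choices each), and over $t\mathcal{O}_\alpha$ when $a>e_{T_a}$ ($q^{\alpha-1}$ choices each). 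Hence deleting all loops (the step-2 block) multiplies the orbit count by exactly $q^{\alpha n_1+\alpha n_2+(\alpha-1)n_3}$ and leaves the quiver $Q'$ with valued spanning tree $T'$ — which one checks depend only on $(Q,T)$, not on $x$ — on whose arrows every coordinate still has positive valuation.

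\emph{Step 3.} Dividing every arrow coordinate by $t$ is the $\mathcal{O}_\alpha$-module isomorphism $t\mathcal{O}_\alpha\xrightarrow{\ \sim\ }\mathcal{O}_{\alpha-1}$; it identifies the positive-valuation locus of $R(Q',\mathcal{O}_\alpha)$ with $R(Q',\mathcal{O}_{\alpha-1})$, and the valuation inequalities cutting out our stratum become exactly those defining $R(Q',\mathcal{O}_{\alpha-1})_{T'}$ — the $v_T$-values and the $v_{T_a}$ all drop by $1$, while the comparisons $a\lessgtr e_{T_a}$ are preserved. On orbits, $(\mathcal{O}_\alpha^\times)^{Q'_0}$ acts on the $t\mathcal{O}_\alpha$-valued data through the surjection $\mathcal{O}_\alpha^\times\twoheadrightarrow\mathcal{O}_{\alpha-1}^\times$, so the orbit set maps bijectively onto the $(\mathcal{O}_{\alpha-1}^\times)^{Q'_0}$-orbit set of $R(Q',\mathcal{O}_{\alpha-1})_{T'}$, of cardinality $A_{Q',T',\alpha-1}$. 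Combining the three blocks yields $A_{Q,T,\alpha}=q^{\alpha n_1+\alpha n_2+(\alpha-1)n_3}\cdot A_{Q',T',\alpha-1}$. The part needing the most care is Step 2: pinning down precisely which arrows turn into loops in terms of $T_a$ and $e_{T_a}$, checking that the torus acts trivially on all of them simultaneously once the Step-1 normalisations have been carried out, and verifying that the leftover stratum is literally $R(Q',\mathcal{O}_{\alpha-1})_{T'}$ with $(Q',T')$ independent of $x$. This is exactly where the coarse valued-spanning-tree stratification behaves better than the fine stratification by all valuations, and where the bookkeeping follows \cite{AMRV22}; the two geometric inputs (orbit-count invariance under contracting a unit edge, triviality of the torus on loop coordinates) are routine, so once the combinatorics of $T_a$, $v_{T_a}$ and $e_{T_a}$ is settled the identity drops out.
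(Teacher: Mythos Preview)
Your proof is correct and follows essentially the same approach as the paper: normalise the unit tree edges to $1$ (your Step~1), split off the loop coordinates as free factors on which the reduced torus acts trivially (your Step~2), then pass to depth $\alpha-1$ via $t\mathcal{O}_\alpha\simeq\mathcal{O}_{\alpha-1}$ (your Step~3). If anything you are slightly more explicit than the paper, which compresses Step~3 into an abuse of notation; your observation that the $(\mathcal{O}_\alpha^\times)^{Q'_0}$-action on $t\mathcal{O}_\alpha$-valued data factors through $(\mathcal{O}_{\alpha-1}^\times)^{Q'_0}$ is exactly the content hidden behind that abuse.
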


\begin{proof}

Throughout the proof, we will be considering orbits of $\FF_{q}$-rational
points. Let $a\in Q_{1}$ be the first arrow of $T$ which gets contracted
during step 1. Then the $\left(\mathcal{O}_{\alpha}^{\times}\right)^{Q_{0}}$-orbits
of $R(Q,\mathcal{O}_{\alpha})_{T}$ are in one-to-one correspondence
with $\left(\mathcal{O}_{\alpha}^{\times}\right)^{(Q/a)_{0}}$-orbits
of $\{x\in R(Q,\mathcal{O}_{\alpha})_{T}\ \vert\ x_{a}=1\}$, where
the factor $\mathcal{O}_{\alpha}^{\times}\subseteq\left(\mathcal{O}_{\alpha}^{\times}\right)^{Q_{0}}$
corresponding to $[s(a)]=[t(a)]\in(Q/a)_{0}$ acts diagonally on the
modules $\mathcal{O}_{\alpha}$ located at vertices $s(a)$ and $t(a)$.

Repeating this reasoning for all arrows $a_{1},\ldots,a_{s}$ of $T$
which get contracted (in step 1) during the construction of $Q'$,
we obtain that the $\left(\mathcal{O}_{\alpha}^{\times}\right)^{Q_{0}}$-orbits
of $R(Q,\mathcal{O}_{\alpha})_{T}$ are in one-to-one correspondence
with $\left(\mathcal{O}_{\alpha}^{\times}\right)^{Q'_{0}}$-orbits
of:\[
\{x\in R(Q,\mathcal{O}_{\alpha})_{T}\ \vert\ x_{a_1}=\ldots=x_{a_s}=1\}
\simeq
R(Q',\mathcal{O}_{\alpha})_{T'}\times\mathcal{O}_{\alpha}^{\oplus n_1}\times\mathcal{O}_{\alpha}^{\oplus n_2}\times\left(t\mathcal{O}_{\alpha}\right)^{\oplus n_3}
.
\] The above isomorphism is $\left(\mathcal{O}_{\alpha}^{\times}\right)^{Q'_{0}}$-equivariant,
with trivial action on the second and third factors of the right-hand
side. Here $T'$ is obtained from $T$ by contracting $a_{1},\ldots,a_{s}$
and leaving valuations unchanged. By abuse of notation, let us also
call $T'$ the valued spanning tree obtained by contracting $a_{1},\ldots,a_{s}$
and decreasing valuations by 1. Then we obtain as claimed:\[
A_{Q,T,\alpha}=q^{\alpha n_1 + \alpha n_2 + (\alpha-1)n_2}\cdot A_{Q',T',\alpha-1}.
\]\end{proof}

The contraction-deletion algorithm terminates in step 3, when $Q'$
is a one-vertex quiver without arrows. Then $A_{Q',\bullet,\alpha'}(q)=1$
and it follows from Proposition \ref{Prop/KacPolContDel} that\[
A_{Q,T,\alpha}(q)=q^{n_T}
\ ; \ 
n_T:=\sum_{a\not\in T}(\alpha-v_{T_a}-\delta_{a>e_{T_a}}).
\]This yields the following:

\begin{thm} \label{Thm/PositivityToricKacPol}

Let $\rr=\underline{1}$, then $A_{Q,\rr,\alpha}$ has non-negative
coefficients.

\end{thm}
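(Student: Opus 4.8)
The plan is to assemble the machinery set up just above the statement: the stratification of $R(Q,\mathcal{O}_{\alpha})_{\mathrm{ind.}}$ by valued spanning trees together with the recursion of Proposition~\ref{Prop/KacPolContDel}. First I would dispose of the disconnected case. If $Q$ has at least two vertices and is disconnected, every locally free representation of rank $\underline{1}$ splits as a direct sum over the connected components of $Q$, so none is indecomposable, $A_{Q,\underline{1},\alpha}=0$, and there is nothing to prove. Hence I may assume $Q$ connected, so that $R(Q,\mathcal{O}_{\alpha})_{\mathrm{ind.}}\neq\emptyset$ and all the preceding discussion applies.

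Next, I would invoke the decomposition $A_{Q,\underline{1},\alpha}=\sum_{T}A_{Q,T,\alpha}$, the sum running over valued spanning trees $T$ of $Q$ (empty strata contributing $0$), and reduce the claim to showing that each nonzero stratum contribution $A_{Q,T,\alpha}$ is a monomial in $q$. For this I iterate Proposition~\ref{Prop/KacPolContDel}: running the contraction--deletion algorithm on a representative $x\in R(Q,\mathcal{O}_{\alpha})_{T}$, each invocation of that proposition multiplies the count by a factor $q^{\alpha n_{1}+\alpha n_{2}+(\alpha-1)n_{3}}$ and drops the depth by one, and the algorithm halts on the one-vertex arrowless quiver, where the count is $1$. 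Telescoping these factors yields the closed form $A_{Q,T,\alpha}(q)=q^{n_{T}}$ with $n_{T}=\sum_{a\notin T}\bigl(\alpha-v_{T_{a}}-\delta_{a>e_{T_{a}}}\bigr)$, as already recorded.

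It then remains to see that the exponent is non-negative. One argument: every tree edge is contracted by step~1 at some non-negative depth, hence has valuation at most $\alpha-1$; so $v_{T_{a}}\leq\alpha-1$ for every $a\notin T$, giving $\alpha-v_{T_{a}}-\delta_{a>e_{T_{a}}}\geq 0$ termwise, whence $n_{T}\geq 0$. (Alternatively, $A_{Q,T,\alpha}$ is by construction a count of $(\mathcal{O}_{\alpha}^{\times})^{Q_{0}}$-orbits of $\mathbb{F}_{q}$-points, so it is a non-negative integer for all prime powers $q$; a Laurent monomial $q^{n_{T}}$ can be integer-valued at all prime powers only if $n_{T}\geq 0$.) Therefore $A_{Q,\underline{1},\alpha}$ is a sum of monomials each with coefficient $1$, so in particular all its coefficients are non-negative, which is the assertion.

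The substance of the argument lies not in this final bookkeeping but in Proposition~\ref{Prop/KacPolContDel} and the explicit description of $R(Q,\mathcal{O}_{\alpha})_{T}$ that precede it: correctly partitioning the arrows contracted during step~1 into those contributing $\alpha n_{1}$, $\alpha n_{2}$ and $(\alpha-1)n_{3}$ according to whether they become loops in the current depth or are carried down to depth $\alpha-1$, and matching the valuation inequalities produced by the algorithm with the defining conditions of the stratum. Compared with the classical depth-one case, the main conceptual point is that the individual exponent $n_{T}$ genuinely depends on the chosen total order of $Q_{1}$ through the terms $\delta_{a>e_{T_{a}}}$, so one should not expect to recover a Tutte-polynomial evaluation as in \cite{AMRV22}; nonetheless the sum over all $T$ is order-independent and positivity survives.
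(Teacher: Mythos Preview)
Your proposal is correct and follows essentially the same route as the paper: the theorem is deduced from the decomposition $A_{Q,\underline{1},\alpha}=\sum_T A_{Q,T,\alpha}$ together with the monomial formula $A_{Q,T,\alpha}=q^{n_T}$ obtained by iterating Proposition~\ref{Prop/KacPolContDel}. You supply two details the paper leaves implicit---the trivial disconnected case and the verification that $n_T\geq 0$---but the argument is otherwise identical.
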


\begin{rmk}

In \cite{AMRV22}, toric Kac polynomials are shown to be specialisations
of Tutte polynomials. This can be deduced from a recursive relation
satisfied by $A_{Q,\underline{1},1}$ under contraction-deletion of
edges, for which Tutte polynomials are universal. This relation relies
on the fact that, for $x\in R(Q,\FF_{q})$ and $a\in Q_{1}$, either
$x_{a}$ is invertible or $x_{a}=0$. This is no longer the case for
$\alpha>1$ and $A_{Q,\underline{1},\alpha}$ cannot be obtained as
a specialisation of the Tutte polynomial of $Q$ (see also \cite[Prop. 7.16.]{HLRV18}).
\end{rmk}

\subsection{Purity of toric preprojective stacks in higher depth \label{Sect/CohPreprojStack}}

In this section, we upgrade Theorems \ref{Thm/CountMomMapGenFib}
and \ref{Thm/PositivityToricKacPol} to a cohomological result. Set
$\KK=\CC$. We show by direct computation that the compactly supported
cohomology of $\left[\mu_{Q,\rr,\alpha}^{-1}(t^{\alpha-1}\cdot\lambda)/\GL(\rr,\mathcal{O}_{\alpha})\right]$
($\rr=\underline{1}$) is pure and contains a pure Hodge structure
with E-poynomial $A_{Q,\rr,\alpha}$. This leads us to compute the
compactly supported cohomology of $\left[\mu_{Q,\rr,\alpha}^{-1}(0)/\GL(\rr,\mathcal{O}_{\alpha})\right]$,
which is also pure and satisfies a formula analogous to cohomological
integrality in \cite{Dav17a}.

We assume throughout that $Q$ is connected (the non-connected case
is analogous, by treating connected components separately). Set $\rr=1$
and call $\mu_{Q,\rr,\alpha}:=\mu_{Q,\alpha}$ for short. We compute
$\HH_{\mathrm{c}}^{\bullet}\left(\left[\mu_{Q,\alpha}^{-1}(t^{\alpha-1}\cdot\lambda)/\GL(\rr,\mathcal{O}_{\alpha})\right]\right)$
using a $\GL(\rr,\mathcal{O}_{\alpha})$-equivariant stratification,
where strata are labeled by valued spanning trees. Let $\pi:\mu_{Q,\alpha}^{-1}(t^{\alpha-1}\cdot\lambda)\subseteq R(\overline{Q},\mathcal{O}_{\alpha})\rightarrow R(Q,\mathcal{O}_{\alpha})$
be the projection $(x,y)\mapsto x$. For $T$ a valued spanning tree
of $Q$, define the stratum $\mu_{Q,\alpha}^{-1}(t^{\alpha-1}\cdot\lambda)_{T}:=\pi^{-1}\left(R(Q,\mathcal{O}_{\alpha})_{T}\right)$.
We use the term ``stratification'' in a loose sense: the set of
spanning trees of $Q$ can be endowed with a partial order such that,
for any valued spanning tree $T$,\[
\bigsqcup_{T'\leq T}\mu_{Q,\alpha}^{-1}(t^{\alpha-1}\cdot\lambda)_{T'}\subseteq\mu_{Q,\alpha}^{-1}(t^{\alpha-1}\cdot\lambda)
\] is closed. One can then compute the (compactly supported) cohomology
of $\left[\mu_{Q,\rr,\alpha}^{-1}(t^{\alpha-1}\cdot\lambda)/\GL(\rr,\mathcal{O}_{\alpha})\right]$
from the cohomology of the strata using successive open-closed decompositions
and Lemma \ref{Lem/Strat}. The following proposition gives us the
partial order on strata that we are looking for:

\begin{prop} \label{Prop/Strata}

The following closed subset of $R(Q,\mathcal{O}_{\alpha})_{\mathrm{ind}}$
is a union of strata $R(Q,\mathcal{O}_{\alpha})_{T'}$:

\[
R(Q,\mathcal{O}_{\alpha})_{\leq T}
:=
\left\{
x\in R(Q,\mathcal{O}_{\alpha})\
\left\vert 
\begin{array}{ll}
\val(x_{a})\geq v_{T}(a), & a\in T \\
\val(x_{a})\geq v_{T_{a}}+\delta_{a>e_{T_{a}}}, & a\not\in T
\end{array}
\right.
\right\}
.
\]

\end{prop}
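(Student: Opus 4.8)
The plan is to show that membership in $R(Q,\mathcal{O}_{\alpha})_{\leq T}$ depends only on the valuation datum $(\val(x_a))_{a\in Q_1}$, and that whenever $x$ satisfies the inequalities defining $R(Q,\mathcal{O}_{\alpha})_{\leq T}$, the valued spanning tree $T_x$ produced by the contraction-deletion algorithm satisfies $T_x \leq T$ in a partial order to be pinned down, or more directly: the inequalities defining $R(Q,\mathcal{O}_{\alpha})_{\leq T'}$ for $T'=T_x$ are implied by those defining $R(Q,\mathcal{O}_{\alpha})_{\leq T}$. Equivalently, I would simply prove the set-theoretic identity
\[
R(Q,\mathcal{O}_{\alpha})_{\leq T} = \bigsqcup_{x\in R(Q,\mathcal{O}_{\alpha})_{\leq T}} R(Q,\mathcal{O}_{\alpha})_{T_x},
\]
i.e. that $R(Q,\mathcal{O}_{\alpha})_{\leq T}$ is closed under the equivalence relation $x\sim x' \iff T_x = T_{x'}$. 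Since $R(Q,\mathcal{O}_{\alpha})=\bigsqcup_{T'} R(Q,\mathcal{O}_{\alpha})_{T'}$ already partitions the indecomposable locus, this closure property is exactly the assertion that $R(Q,\mathcal{O}_{\alpha})_{\leq T}$ is a union of strata. Closedness itself is clear once this is known, since $R(Q,\mathcal{O}_{\alpha})_{\leq T}$ is cut out inside $R(Q,\mathcal{O}_{\alpha})$ by finitely many conditions of the form $\val(x_a)\geq m_a$, each of which is a closed condition (vanishing of the first $m_a$ coefficients of $x_a$).

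The key step is therefore the following combinatorial claim: if $x\in R(Q,\mathcal{O}_{\alpha})_{\leq T}$ and $T'=T_x$ is the valued spanning tree the algorithm assigns to $x$, then every $x'\in R(Q,\mathcal{O}_{\alpha})_{T'}$ again lies in $R(Q,\mathcal{O}_{\alpha})_{\leq T}$. To see this I would run the contraction-deletion algorithm on $x$ and track, step by step, the inequalities that hold. At each application of step~1 a non-loop arrow $a_0$ with $x_{a_0}\in\mathcal{O}_\alpha^\times$ is contracted; at step~2 a loop is deleted; at step~3 all valuations are positive and are uniformly decremented. The arrows that end up in $T' = T_x$ are exactly those contracted in step~1, and for $a\in T'$ one has $\val(x_a) = v_{T'}(a)$ by construction. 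For $a\notin T'$, the description of $R(Q,\mathcal{O}_{\alpha})_{T'}$ recalled before Proposition~\ref{Prop/Strata} gives $\val(x_a)\geq v_{T'_a}+\delta_{a>e_{T'_a}}$. I must then show, using that $x\in R(Q,\mathcal{O}_{\alpha})_{\leq T}$, that the same datum $(v_{T'}(a))_{a\in T'}$ together with the derived quantities $v_{T'_a}, e_{T'_a}$ satisfies the inequalities of $R(Q,\mathcal{O}_{\alpha})_{\leq T}$: namely $v_{T'}(a)\geq v_T(a)$ for $a\in T\cap T'$, and for an arrow $a$ lying in $T$ but not $T'$ (so it was \emph{not} contracted, meaning at the relevant stage some strictly smaller arrow on the $T$-path had a strictly larger valuation available, forcing $\val(x_a)\geq v_{T_a}+\delta_{a>e_{T_a}}\geq v_T(a)$ automatically since $a\in T$), and symmetrically for $a\in T'\setminus T$. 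This is a bookkeeping argument about which edge of a cycle gets contracted first, essentially the matroid-theoretic fact underlying the contraction-deletion recursion in \cite{AMRV22}, adapted to keep track of $t$-adic valuations.

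The main obstacle I anticipate is handling the interaction of the valuations across the several passes through step~3: each time step~3 fires, \emph{all} surviving valuations drop by one and the quiver stays the same, so the partial order on valued spanning trees is not just "componentwise $\geq$ on the tree edges" but must be compatible with this uniform shift and with the contraction/deletion history. Concretely, the subtlety is that $T' = T_x$ records valuations $v_{T'}(a)$ that have been \emph{decremented} by the number of step-3 passes preceding $a$'s contraction, so comparing $v_{T'}(a)$ with $v_T(a)$ requires arguing that the number of step-3 passes is itself controlled by the data defining $R(Q,\mathcal{O}_{\alpha})_{\leq T}$. I would organize the proof by induction on $\alpha$ (peeling off one step-3 pass at a time, which replaces $\mathcal{O}_\alpha$ by $\mathcal{O}_{\alpha-1}$) and on $\sharp Q_1$ (peeling off one step-1 contraction or one step-2 deletion), exactly mirroring the recursion in Proposition~\ref{Prop/KacPolContDel}; the base case is the one-vertex, no-loop quiver where both $R(Q,\mathcal{O}_\alpha)_{\leq T}$ and the unique stratum are a point. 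Once the inductive step is set up, each case reduces to checking a single inequality of the form "if $\val(x_a)$ was large enough to keep $a$ out of the tree at this stage, then $\val(x'_a)$ is large enough for any $x'$ in the target stratum", which is immediate from the explicit descriptions.
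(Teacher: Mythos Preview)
Your overall reduction is correct: the content of the proposition is exactly that if some $x\in R(Q,\mathcal{O}_\alpha)_{\leq T}$ has $T_x=T'$, then the whole stratum $R(Q,\mathcal{O}_\alpha)_{T'}$ is contained in $R(Q,\mathcal{O}_\alpha)_{\leq T}$. However, two things go wrong in your proposal.

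First, your ``main obstacle'' is a phantom. The labeling of $T_x$ is defined as $v(a)=\val(x_a)$ with the \emph{original} valuation of $x_a$, not a value decremented by the number of step-3 passes. Step~3 is only used internally to decide which edge gets contracted next; the recorded label is always the ambient $t$-adic valuation. So there is no shift to reconcile, and the partial order really is governed by the raw numbers $v_{T'_a}+\delta_{a>e_{T'_a}}$ versus $v_{T_a}+\delta_{a>e_{T_a}}$. Building an induction on $\alpha$ around this non-issue would only obscure the argument.

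Second, and more substantively, the paper does not induct along the algorithm at all. It proves directly that for every $a\in Q_1$ one has
\[
v_{T'_a}+\delta_{a>e_{T'_a}}\ \geq\ v_{T_a}+\delta_{a>e_{T_a}},
\]
using a single spanning-tree fact you never isolate: for any two spanning trees $T,T'$ and any edge $a$, the $T$-path $T_a$ is contained in $\bigcup_{b\in T'_a} T_b$. (Reason: removing any $e\in T_a$ separates $T$ into two pieces containing the two endpoints of $a$; the $T'$-path $T'_a$ must cross, so some $b\in T'_a$ has endpoints on opposite sides, whence $e\in T_b$.) From this one gets immediately
\[
v_{T'_a}=\max_{b\in T'_a}\val(x_b)\ \geq\ \max_{b\in T'_a} v_{T_b}\ \geq\ v_{T_a},
\]
and the only remaining work is a short case check showing strictness when $e_{T_a}<a\leq e_{T'_a}$: pick $b\in T'_a$ with $e_{T_a}\in T_b$ and argue that if both endpoint inequalities in the chain are equalities then the ordering forces $b>e_{T_b}$, which feeds an extra $+1$ back in. Your inductive scheme might eventually reproduce these inequalities, but you never state this tree-path containment, and without it the ``single inequality'' you claim is immediate at each inductive step is not actually immediate; it is the whole proposition in miniature.
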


\begin{proof}

Let $T'$ be a valued spanning tree of $Q$ and suppose that $R(Q,\mathcal{O}_{\alpha})_{\leq T}\cap R(Q,\dd,\mathcal{O}_{\alpha})_{T'}\ne\emptyset$.
We claim that for any $a\in Q_{1}$, $v_{T'_{a}}+\delta_{a>e_{T'_{a}}}\geq v_{T_{a}}+\delta_{a>e_{T_{a}}}$,
hence $R(Q,\mathcal{O}_{\alpha})_{T'}\subseteq R(Q,\mathcal{O}_{\alpha})_{\leq T}$.
Note that when $a$ belongs to $T$, we have: $T_{a}=Q\restriction_{\{a\}}$,
$v_{T_{a}}=v_{T}(a)$, $e_{T_{a}}=a$ and the right-hand side simplifies
to $v_{T}(a)$. The same goes for the left-hand side when $a$ belongs
to $T'$. We write $a\in T$ for short when $a$ is an arrow of $T$.

Let us prove the claim. Take $x\in R(Q,\mathcal{O}_{\alpha})_{\leq T}\cap R(Q,\mathcal{O}_{\alpha})_{T'}$;
since $T_{a}\subseteq\cup_{b\in T'_{a}}T_{b}$, we obtain:\[
v_{T'_{a}}=\max_{b\in T'_{a}}\{v_{T'}(b)\}=\max_{b\in T'_{a}}\{\val(x_{b})\}\geq\max_{b\in T'_{a}}\{v_{T_{b}}\}\geq v_{T_{a}}.
\]The second equality holds because $x\in R(Q,\mathcal{O}_{\alpha})_{T'}$,
whereas $\val(x_{b})\geq v_{T_{b}}$ because $x\in R(Q,\mathcal{O}_{\alpha})_{\leq T}$.
We need to further show that, if $e_{T_{a}}<a\leq e_{T'_{a}}$, then
$v_{T'_{a}}>v_{T_{a}}$. Suppose $e_{T_{a}}<a\leq e_{T'_{a}}$ and
consider some $b\in T'_{a}$ such that $e_{T_{a}}\in T_{b}$. Then:\[
v_{T'_a}\geq v_{T'}(b)=\val(x_b)\geq v_{T_b}+\delta_{b>e_{T_b}}\geq v_T(e_{T_a})=v_{T_a}.
\]We argue that either the leftmost or the rightmost inequality is strict.
Indeed, if $v_{T'_{a}}=v_{T'}(b)$ and $v_{T_{b}}=v_{T_{a}}$, then
$b\geq e_{T'_{a}}$ (as $v_{T'}(b)=v_{T'}(e_{T'_{a}})$) and $e_{T_{a}}\geq e_{T_{b}}$
(as $v_{T}(e_{T_{a}})=v_{T}(e_{T_{b}})$). Since we assumed that $e_{T_{a}}<a\leq e_{T'_{a}}$,
we obtain $b>e_{T_{b}}$ and the rightmost inequality is strict. This
concludes the proof. \end{proof}

The partial order on valued spanning trees is then given by $T'\leq T$
if, and only if, $R(Q,\mathcal{O}_{\alpha})_{T'}\subseteq R(Q,\mathcal{O}_{\alpha})_{\leq T}$.
Let us now compute the compactly supported cohomology of $\left[\mu_{Q,\alpha}^{-1}(t^{\alpha-1}\cdot\lambda)_{T}/\GL(\rr,\mathcal{O}_{\alpha})\right]$.

\begin{prop} \label{Prop/CohStrat}

Let $T$ be a valued spanning tree of $Q$. Denote by $Q',T'$ the
quiver and valued spanning tree obtained from $Q,T$ by running the
contraction-deletion algorithm for an arbitrary $x\in R(Q,\mathcal{O}_{\alpha})_{T}$
and stopping after the first occurence of step 3. Let $\lambda'\in\ZZ^{Q_{0}}$
be the vector obtained from $\lambda$ by applying the corresponding
contractions. Consider: 
\begin{itemize}
\item $n_{1}$ the number of loops of $Q$; 
\item $n_{2}$ the number of non-loop arrows $a\in Q_{1}$ which get contracted
into loops during step 1 and satisfying $a<e_{T_{a}}$;
\item $n_{3}$ the number of non-loop arrows $a\in Q_{1}$ which get contracted
into loops during step 1 and satisfying $a>e_{T_{a}}$.
\end{itemize}
Then:\[
\HH_{\mathrm{c}}^{\bullet}\left(\left[\mu_{Q,\alpha}^{-1}(t^{\alpha-1}\cdot\lambda)_T/\left(\mathcal{O}_{\alpha}^{\times}\right)^{Q_0}\right]\right)
\simeq
\HH_{\mathrm{c}}^{\bullet}\left(\left[\mu_{Q',\alpha-1}^{-1}(t^{\alpha-2}\cdot\lambda')_{T'}/\left(\mathcal{O}_{\alpha-1}^{\times}\right)^{Q'_0}\right]\right)
\otimes\mathbb{L}^{\otimes\left(2\alpha n_1+2\alpha n_2+(2\alpha-1)n_3+\sharp Q'_1-\sharp Q'_0\right)}.
\]

\end{prop}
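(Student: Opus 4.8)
\emph{Strategy.} The plan is to lift the proof of Proposition~\ref{Prop/KacPolContDel} to compactly supported cohomology, processing one round of the contraction-deletion algorithm — all contractions of step~1, then all loop-deletions of step~2, then a single pass of step~3 — and reading off the $\mathbb{L}$-powers from Lemmas~\ref{Lem/AffFib} and~\ref{Lem/DepthChg}. Since $\rr=\underline{1}$ the ring $\mathcal{O}_{\alpha}$ is commutative, which is used everywhere: the torus $(\mathcal{O}_{\alpha}^{\times})^{Q_{0}}$ acts trivially on each coordinate $x_{a},y_{a}$, the moment-map component at $i$ is $\mu_{Q,\alpha}(x,y)_{i}=\sum_{t(a)=i}x_{a}y_{a}-\sum_{s(a)=i}x_{a}y_{a}$, and a loop $a$ at $i$ contributes $x_{a}y_{a}-x_{a}y_{a}=0$ to it. \emph{Step~1.} For a non-loop tree edge $a\in T$ with $x_{a}$ a unit, I would first show that contracting $a$ leaves $\HH_{\mathrm{c}}^{\bullet}$ unchanged. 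Using the torus factor at $t(a)$, every point of $\mu_{Q,\alpha}^{-1}(t^{\alpha-1}\lambda)_{T}$ moves uniquely — once the other torus coordinates are fixed — into the slice $\{x_{a}=1\}$, whose stabiliser in $(\mathcal{O}_{\alpha}^{\times})^{Q_{0}}$ is the diagonal subtorus $(\mathcal{O}_{\alpha}^{\times})^{(Q/a)_{0}}$; hence $[\mu_{Q,\alpha}^{-1}(t^{\alpha-1}\lambda)_{T}/(\mathcal{O}_{\alpha}^{\times})^{Q_{0}}]\simeq[\{x_{a}=1\}/(\mathcal{O}_{\alpha}^{\times})^{(Q/a)_{0}}]$. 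On $\{x_{a}=1\}$ the moment-map equation at $t(a)$ determines $y_{a}$ uniquely in terms of the remaining coordinates, while the sum of the equations at $s(a)$ and $t(a)$ is the equation of $Q/a$ at the merged vertex, with value $t^{\alpha-1}(\lambda/a)$; thus $\{x_{a}=1\}\simeq\mu_{Q/a,\alpha}^{-1}(t^{\alpha-1}\lambda/a)_{T/a}$ equivariantly, with no $\mathbb{L}$-shift. Iterating over the $s$ tree edges of $T$ with $v_{T}=0$ — which, as in Proposition~\ref{Prop/KacPolContDel}, are exactly the arrows contracted during step~1, and whose contraction does not change the valuation of any other arrow — reduces the problem to $[\mu_{\widehat Q,\alpha}^{-1}(t^{\alpha-1}\lambda')_{\widehat T}/(\mathcal{O}_{\alpha}^{\times})^{Q'_{0}}]$, where $\widehat Q$ is $Q$ with these edges contracted, $\widehat T$ the induced tree (valuations unchanged), and every non-loop arrow of $\widehat Q$ has positive valuation.

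\emph{Step~2.} The loops of $\widehat Q$ are the $n_{1}$ original loops of $Q$ and the $n_{2}+n_{3}$ non-tree arrows whose endpoints were merged by the contractions. For each such loop $a$ at $i$ the pair $(x_{a},y_{a})$ is acted on trivially by the torus, enters no moment-map equation, and is constrained only by $\val(x_{a})\geq v_{T_{a}}+\delta_{a>e_{T_{a}}}$, with $v_{T_{a}}=0$ because $T_{a}$ now consists of contracted (hence zero-valuation) edges. So $(x_{a},y_{a})$ sweeps out $\mathcal{O}_{\alpha}\times\mathcal{O}_{\alpha}\cong\mathbb{A}^{2\alpha}$ when $a$ is an original loop or $a<e_{T_{a}}$, and $t\mathcal{O}_{\alpha}\times\mathcal{O}_{\alpha}\cong\mathbb{A}^{2\alpha-1}$ when $a>e_{T_{a}}$, splitting off as a direct factor with trivial torus action; Lemma~\ref{Lem/AffFib} peels off $\mathbb{L}^{\otimes(2\alpha n_{1}+2\alpha n_{2}+(2\alpha-1)n_{3})}$ and leaves $[\mu_{Q',\alpha}^{-1}(t^{\alpha-1}\lambda')_{\widehat T}/(\mathcal{O}_{\alpha}^{\times})^{Q'_{0}}]$, where $Q'$ carries only its $\sharp Q'_{1}=\sharp Q_{1}-s-n_{1}-n_{2}-n_{3}$ non-loop arrows, all of positive valuation.

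\emph{Step~3.} Write $x_{a}=t\xi_{a}$ with $\xi_{a}\in\mathcal{O}_{\alpha-1}$ and $y_{a}=\bar y_{a}+c_{a}t^{\alpha-1}$ with $\bar y_{a}\in\mathcal{O}_{\alpha-1}$ the truncation of $y_{a}$ and $c_{a}\in\CC$ its top coefficient. Since $x_{a}y_{a}=t\,\xi_{a}\bar y_{a}$ depends only on $\bar y_{a}$, the scalars $c_{a}$ appear in no moment-map equation, no strata condition, and no torus action, so they split off as $\mathbb{A}^{\sharp Q'_{1}}$; what remains is exactly $\mu_{Q',\alpha-1}^{-1}(t^{\alpha-2}\lambda')_{T'}$ — the condition on $\val(x_{a})$ becoming the condition on $\val(\xi_{a})=\val(x_{a})-1$ that defines $T'$ — with the residual $(\mathcal{O}_{\alpha}^{\times})^{Q'_{0}}$-action factoring through $(\mathcal{O}_{\alpha-1}^{\times})^{Q'_{0}}$, so that $K_{\underline{1},\alpha}$ acts trivially. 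Applying Lemma~\ref{Lem/AffFib} (for the $\mathbb{A}^{\sharp Q'_{1}}$-factor) and then Lemma~\ref{Lem/DepthChg} (with $\rr\cdot\rr=\sharp Q'_{0}$) gives
\[
\HH_{\mathrm{c}}^{\bullet}\!\left([\mu_{Q',\alpha}^{-1}(t^{\alpha-1}\lambda')_{\widehat T}/(\mathcal{O}_{\alpha}^{\times})^{Q'_{0}}]\right)\simeq\mathbb{L}^{\otimes(\sharp Q'_{1}-\sharp Q'_{0})}\otimes\HH_{\mathrm{c}}^{\bullet}\!\left([\mu_{Q',\alpha-1}^{-1}(t^{\alpha-2}\lambda')_{T'}/(\mathcal{O}_{\alpha-1}^{\times})^{Q'_{0}}]\right),
\]
and composing this with the shift from Step~2 produces the total $\mathbb{L}$-power $2\alpha n_{1}+2\alpha n_{2}+(2\alpha-1)n_{3}+\sharp Q'_{1}-\sharp Q'_{0}$, which is the assertion.

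\emph{Main obstacle.} The delicate point is Step~1: one must check that normalising $x_{a}=1$ and eliminating $y_{a}$ genuinely yields an isomorphism of quotient stacks, with residual group exactly $(\mathcal{O}_{\alpha}^{\times})^{(Q/a)_{0}}$ and no hidden $\mathbb{L}$-twist, and that at each stage the induced valued spanning tree data ($v_{T_{a}}$, $e_{T_{a}}$ for the contracted and loop-deleted quivers) matches the combinatorics of Propositions~\ref{Prop/KacPolContDel} and~\ref{Prop/Strata}. Once this bookkeeping is secured, Steps~2 and~3 are routine applications of the two computational lemmas.
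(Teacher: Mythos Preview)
Your argument is correct and follows the same strategy as the paper's proof: reduce the stratum for $(Q,T,\alpha,\lambda)$ to the stratum for $(Q',T',\alpha-1,\lambda')$ by processing one round of contraction--deletion, and read off the $\mathbb{L}$-shifts from Lemmas~\ref{Lem/AffFib} and~\ref{Lem/DepthChg}. The only organisational difference is that the paper packages Steps~1--3 into a single $(\mathcal{O}_{\alpha}^{\times})^{Q_{0}}$-equivariant isomorphism
\[
\mu_{Q,\alpha}^{-1}(t^{\alpha-1}\lambda)_{T}\;\simeq\;\bigl(\mu_{Q',\alpha-1}^{-1}(t^{\alpha-2}\lambda')_{T'}\times^{(\mathcal{O}_{\alpha}^{\times})^{Q'_{0}}}(\mathcal{O}_{\alpha}^{\times})^{Q_{0}}\bigr)\times\mathbb{A}^{2\alpha n_{1}}\times\mathbb{A}^{2\alpha n_{2}+(2\alpha-1)n_{3}}\times\mathbb{A}^{\sharp Q'_{1}},
\]
and then invokes Lemma~\ref{Lem/GrpChg} to handle the induced-space factor, whereas you slice to $\{x_{a}=1\}$ one tree edge at a time and argue the stack isomorphism $[\,\cdot\,/(\mathcal{O}_{\alpha}^{\times})^{Q_{0}}]\simeq[\{x_{a}=1\}/(\mathcal{O}_{\alpha}^{\times})^{(Q/a)_{0}}]$ directly. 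These are equivalent: your slicing is exactly the content of Lemma~\ref{Lem/GrpChg} specialised to the free action of the $t(a)$-torus factor on the unit $x_{a}$. One small slip: in your preamble you write that the torus ``acts trivially on each coordinate $x_{a},y_{a}$'', which is false for non-loop arrows (and contradicts your own Step~1); presumably you meant that it acts trivially on each product $x_{a}y_{a}$, or on loop coordinates only. This does not affect the argument.
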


\begin{proof}

We claim that there is an $\left(\mathcal{O}_{\alpha}^{\times}\right)^{Q_{0}}$-equivariant
isomorphism:\[
\Psi:
\mu_{Q,\alpha}^{-1}(t^{\alpha-1}\cdot\lambda)_T
\simeq
\left(
\mu_{Q',\alpha-1}^{-1}(t^{\alpha-2}\cdot\lambda')_{T'}
\times^{\left(\mathcal{O}_{\alpha}^{\times}\right)^{Q'_0}}\left(\mathcal{O}_{\alpha}^{\times}\right)^{Q_0}
\right)
\times\mathbb{A}^{2\alpha n_1}\times\mathbb{A}^{2\alpha n_2+(2\alpha-1)n_3}\times\mathbb{A}^{\sharp Q'_1}.
\] Let us call:
\begin{itemize}
\item $a_{1},\ldots,a_{s}\in Q_{1}$ the arrows of $T$ which get contracted
(in step 1) during the construction of $Q'$;
\item $a'_{1},\ldots,a'_{n_{1}}\in Q_{1}$ the loops of $Q$;
\item $a''_{1},\ldots,a''_{n_{2}}\in Q_{1}$ the non-loop arrows of $Q$
which get contracted into loops (in step 1) and deleted (in step 2)
during the construction of $Q'$ and which satisfy $a<e_{T_{a}}$;
\item $b''_{1},\ldots,b''_{n_{3}}\in Q_{1}$ the non-loop arrows of $Q$
which get contracted into loops (in step 1) and deleted (in step 2)
during the construction of $Q'$ and which satisfy $a>e_{T_{a}}$.
\end{itemize}
Let $(x,y)\in\mu_{Q,\alpha}^{-1}(t^{\alpha-1}\cdot\lambda)_{T}$.
One can check that $(x,y)$ determines a point $(x',y')\in\mu_{Q',\alpha}^{-1}(t^{\alpha-1}\cdot\lambda')_{T'}$
such that, for all $a\in Q'_{1}$, $x'_{a}\in t\mathcal{O}_{\alpha}$.
We describe the components of $\Psi(x,y)$:
\begin{itemize}
\item Note that $\mu_{Q',\alpha-1}^{-1}(t^{\alpha-2}\cdot\lambda')_{T'}\times^{\left(\mathcal{O}_{\alpha}^{\times}\right)^{Q'_{0}}}\left(\mathcal{O}_{\alpha}^{\times}\right)^{Q_{0}}\simeq\mu_{Q',\alpha-1}^{-1}(t^{\alpha-2}\cdot\lambda')_{T'}\times\left(\mathcal{O}_{\alpha}^{\times}\right)^{s}$.
The component of $\Psi(x,y)$ along $\mu_{Q',\alpha-1}^{-1}(t^{\alpha-2}\cdot\lambda')_{T'}$
is induced by $(x',y')$ via the morphisms of $\mathcal{O}_{\alpha}$-modules
$t\mathcal{O}_{\alpha}\simeq\mathcal{O}_{\alpha-1}$ (for the $x$-coordinate)
and $\mathcal{O}_{\alpha}\twoheadrightarrow\mathcal{O}_{\alpha-1}$
(for the $y$-coordinate). The component along $\left(\mathcal{O}_{\alpha}^{\times}\right)^{s}$
is $(x_{a_{t}})_{1\leq t\leq s}$;
\item $\Psi(x,y)_{\mathbb{A}^{2\alpha n_{1}}}=(x_{a'_{n}},y_{a'_{n}})_{1\leq n\leq n_{1}}$
- note that the moment map equation imposes no conditions on $(x_{a'_{n}},y_{a'_{n}})$,
as $\rr=\underline{1}$;
\item $\Psi(x,y)_{\mathbb{A}^{2\alpha n_{2}+(2\alpha-1)n_{3}}}=\left((x_{a''_{n}},y_{a''_{n}})_{1\leq n\leq n_{2}},(x_{b''_{n}},y_{b''_{n}})_{1\leq n\leq n_{3}}\right)$
- note that $x_{a''_{n}}\in t\mathcal{O}_{\alpha}$ by assumption;
moreover the coordinates $(x_{a''_{n}},y_{a''_{n}})_{1\leq n\leq n_{2}},(x_{b''_{n}},y_{b''_{n}})_{1\leq n\leq n_{3}}$
may be chosen freely and determine $x_{a_{1}}y_{a_{1}},\ldots,x_{a_{s}}y_{a_{s}}$
through the moment map equation;
\item Write $y'_{a}=\sum_{k}y'_{a,k}\cdot t^{k}\in\mathcal{O}_{\alpha}$
for $a\in Q'_{1}$; then $\Psi(x,y)_{\mathbb{A}^{\sharp Q'_{1}}}=(y'_{a,\alpha-1})_{a\in Q'_{1}}$.
\end{itemize}
Consequently, $\Psi(x,y)$ contains all the coordinates of $(x,y)$
except for $y_{a_{1}},\ldots,y_{a_{s}}$. These are determined from
$\Psi(x,y)$ using the moment map equation. The action of $\left(\mathcal{O}_{\alpha}^{\times}\right)^{Q_{0}}$
on $\mu_{Q',\alpha-1}^{-1}(t^{\alpha-2}\cdot\lambda')_{T'}\times\left(\mathcal{O}_{\alpha}^{\times}\right)^{s}$
is induced by a choice of splitting $\left(\mathcal{O}_{\alpha}^{\times}\right)^{Q_{0}}\simeq\left(\mathcal{O}_{\alpha}^{\times}\right)^{Q'_{0}}\times\left(\mathcal{O}_{\alpha}^{\times}\right)^{s}$
and makes the morphism $\mu_{Q,\alpha}^{-1}(t^{\alpha-1}\cdot\lambda)_{T}\rightarrow\mu_{Q',\alpha-1}^{-1}(t^{\alpha-2}\cdot\lambda')_{T'}\times^{\left(\mathcal{O}_{\alpha}^{\times}\right)^{Q'_{0}}}\left(\mathcal{O}_{\alpha}^{\times}\right)^{Q_{0}}$
$\left(\mathcal{O}_{\alpha}^{\times}\right)^{Q_{0}}$-equivariant.
The $\left(\mathcal{O}_{\alpha}^{\times}\right)^{Q_{0}}$-action on
the remaining components of the right-hand side is transferred from
the action on the left-hand side.

Using the isomorphism $\Psi$, Lemmas \ref{Lem/AffFib} and \ref{Lem/GrpChg}
yield:\[
\HH_{\mathrm{c}}^{\bullet}\left(\left[\mu_{Q,\alpha}^{-1}(t^{\alpha-1}\cdot\lambda)_T/\left(\mathcal{O}_{\alpha}^{\times}\right)^{Q_0}\right]\right)
\simeq
\HH_{\mathrm{c}}^{\bullet}\left(\left[\mu_{Q',\alpha-1}^{-1}(t^{\alpha-2}\cdot\lambda')_{T'}/\left(\mathcal{O}_{\alpha}^{\times}\right)^{Q'_0}\right]\right)
\otimes\mathbb{L}^{\otimes\left(2\alpha n_1+2\alpha n_2+(2\alpha-1)n_3+\sharp Q'_1\right)}.
\] Since the action of $\left(\mathcal{O}_{\alpha}^{\times}\right)^{Q'_{0}}$
on $\mu_{Q',\alpha-1}^{-1}(t^{\alpha-2}\cdot\lambda')_{T'}$ factors
through the quotient group $\left(\mathcal{O}_{\alpha-1}^{\times}\right)^{Q'_{0}}$,
Lemma \ref{Lem/DepthChg} gives the desired formula. \end{proof}

When the contraction-deletion algorithm terminates, the computation
finishes with $\HH_{\mathrm{c}}^{\bullet}\left(\left[\mathrm{pt}/\mathcal{O}_{\alpha'}^{\times}\right]\right)\simeq\mathbb{L}^{\otimes-\alpha'}\otimes\HH_{\mathrm{c}}^{\bullet}\left(\mathrm{B}\mathbb{G}_{\mathrm{m}}\right)$
, which is pure. This shows that $\HH_{\mathrm{c}}^{\bullet}\left(\left[\mu_{Q,\alpha}^{-1}(t^{\alpha-1}\cdot\lambda)_{T}/\left(\mathcal{O}_{\alpha}^{\times}\right)^{Q_{0}}\right]\right)$
is pure, of Tate type, which allows us to conclude using Lemma \ref{Lem/Strat}
and Theorem \ref{Thm/CountMomMapGenFib}:

\begin{thm} \label{Thm/CohMomMapGenFib}

Let $\rr=\underline{1}$. then:\[
\HH_{\mathrm{c}}^{\bullet}\left(\left[\mu_{Q,\rr,\alpha}^{-1}(t^{\alpha-1}\cdot\lambda)/\GL(\rr,\mathcal{O}_{\alpha})\right]\right)
\simeq
A_{Q,\rr,\alpha}(\mathbb{L})\otimes\mathbb{L}^{1-\alpha\langle\rr,\rr\rangle}\otimes\HH_{\mathrm{c}}^{\bullet}\left(\mathrm{B}\mathbb{G}_{\mathrm{m}}\right)
\]In particular, $\HH_{\mathrm{c}}^{\bullet}\left(\left[\mu_{Q,\rr,\alpha}^{-1}(t^{\alpha-1}\cdot\lambda)/\GL(\rr,\mathcal{O}_{\alpha})\right]\right)$
carries a pure Hodge structure.

\end{thm}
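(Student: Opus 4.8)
The plan is to deduce the theorem from the geometric stratification of $\mu_{Q,\alpha}^{-1}(t^{\alpha-1}\cdot\lambda)$ constructed in Propositions~\ref{Prop/Strata} and~\ref{Prop/CohStrat}, together with the point count already obtained in Theorem~\ref{Thm/CountMomMapGenFib}. The argument naturally splits into a purity step and an identification step.

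For purity, I would fix a linear extension $T_1,\ldots,T_m$ of the partial order on valued spanning trees (so that $T_i\leq T_j$ forces $i\leq j$). Proposition~\ref{Prop/Strata} shows that each $W_k:=\bigsqcup_{i\leq k}\mu_{Q,\alpha}^{-1}(t^{\alpha-1}\cdot\lambda)_{T_i}$ is a $\GL(\rr,\mathcal{O}_{\alpha})$-invariant closed subvariety, with $W_k\setminus W_{k-1}$ the single stratum labeled by $T_k$. First I would check, by induction on $\alpha$ using Proposition~\ref{Prop/CohStrat}, that $\HH_{\mathrm{c}}^{\bullet}([\mu_{Q,\alpha}^{-1}(t^{\alpha-1}\cdot\lambda)_{T}/\GL(\rr,\mathcal{O}_{\alpha})])$ is pure of Tate type: the recursion strictly lowers the depth and terminates at $\HH_{\mathrm{c}}^{\bullet}([\mathrm{pt}/\mathcal{O}_{\alpha'}^{\times}])\simeq\mathbb{L}^{\otimes-\alpha'}\otimes\HH_{\mathrm{c}}^{\bullet}(\mathrm{B}\mathbb{G}_{\mathrm{m}})$, which is pure, while each step of the recursion only tensors by a power of $\mathbb{L}$ (Lemma~\ref{Lem/AffFib}) and changes the group via Lemmas~\ref{Lem/GrpChg} and~\ref{Lem/DepthChg}, all of which preserve purity and the Tate property. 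Then applying Lemma~\ref{Lem/Strat} to the open-closed pairs $(W_{k-1}\subseteq W_k)$, one at a time by induction on $m$, yields that $\HH_{\mathrm{c}}^{\bullet}([\mu_{Q,\alpha}^{-1}(t^{\alpha-1}\cdot\lambda)/\GL(\rr,\mathcal{O}_{\alpha})])$ is pure, and — since the long exact sequences then split — of Tate type.

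For the identification I would invoke the $E$-series formalism. Each stratum is, by the isomorphism $\Psi$ in the proof of Proposition~\ref{Prop/CohStrat} and induction on depth, iteratively built from affine spaces and copies of $\mathcal{O}_{\alpha}^{\times}$, hence polynomial-count; a finite disjoint union of polynomial-count locally closed subvarieties is again polynomial-count, so $\mu_{Q,\alpha}^{-1}(t^{\alpha-1}\cdot\lambda)$ is polynomial-count, and so is $\GL(\rr,\mathcal{O}_{\alpha})=(\mathcal{O}_{\alpha}^{\times})^{Q_0}$. By the principle recalled at the end of Section~\ref{Sect/EqCoh}, a pure $\HH_{\mathrm{c}}^{\bullet}$ of a polynomial-count stack is of Tate type, concentrated in even degrees, and determined by its $E$-series, which by Proposition~\ref{Prop/E-seriesVSCountF_q} equals the stacky point count with $q$ replaced by $xy$. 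Theorem~\ref{Thm/CountMomMapGenFib} identifies this count with $q^{-\alpha\langle\rr,\rr\rangle}A_{Q,\rr,\alpha}(q)/(1-q^{-1})=A_{Q,\rr,\alpha}(q)\cdot q^{1-\alpha\langle\rr,\rr\rangle}\cdot\tfrac{1}{q-1}$. Since $A_{Q,\rr,\alpha}\in\ZZ_{\geq0}[q]$ by Theorem~\ref{Thm/PositivityToricKacPol} (so $A_{Q,\rr,\alpha}(\mathbb{L})$ is a genuine pure Tate object) and $\HH_{\mathrm{c}}^{\bullet}(\mathrm{B}\mathbb{G}_{\mathrm{m}})$ has $E$-series $\tfrac{1}{xy-1}$, the right-hand side of the theorem is a pure Tate-type graded mixed Hodge structure with exactly this $E$-series; as two such structures with equal $E$-series are isomorphic, the proof is complete. (Alternatively, one could track the Tate twists by hand through Proposition~\ref{Prop/CohStrat} and sum them using $\sum_T q^{n_T}=A_{Q,\underline 1,\alpha}(q)$ from Section~\ref{Sect/PosToricKacPol}, but the $E$-series route avoids that bookkeeping.)

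The main obstacle is the purity step, and specifically the bookkeeping around it: one must make sure that the closure relations of Proposition~\ref{Prop/Strata} genuinely assemble into a chain $W_0\subseteq W_1\subseteq\cdots\subseteq W_m$ of closed subvarieties to which Lemma~\ref{Lem/Strat} applies stratum by stratum, and that the recursion of Proposition~\ref{Prop/CohStrat} preserves both purity and the Tate property all the way down to the terminal stratum $[\mathrm{pt}/\mathcal{O}_{\alpha'}^{\times}]$. Once purity is in hand, the identification with $A_{Q,\rr,\alpha}$ is a formal consequence of Theorem~\ref{Thm/CountMomMapGenFib} and the $E$-series formalism, so no delicate geometry remains.
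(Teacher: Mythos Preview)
Your proposal is correct and follows essentially the same approach as the paper: establish purity of each stratum via the recursion of Proposition~\ref{Prop/CohStrat} terminating at $[\mathrm{pt}/\mathcal{O}_{\alpha'}^{\times}]$, assemble the strata using Proposition~\ref{Prop/Strata} and Lemma~\ref{Lem/Strat}, and then identify the resulting pure Tate-type cohomology by matching $E$-series via Theorem~\ref{Thm/CountMomMapGenFib} (with Theorem~\ref{Thm/PositivityToricKacPol} ensuring $A_{Q,\rr,\alpha}(\mathbb{L})$ is well-defined). Your write-up is in fact somewhat more explicit than the paper's about the linear extension of the partial order and the polynomial-count verification, but the logic is identical.
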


There is more: as can be seen from the proof of Proposition \ref{Prop/Strata},
the parameter $\lambda$ plays no role in computing the cohomology
of the strata. If we replace $\lambda$ with 0, we may compute $\HH_{\mathrm{c}}^{\bullet}\left(\left[\mu_{Q,\rr,\alpha}^{-1}(0)/\GL(\rr,\mathcal{O}_{\alpha})\right]\right)$
by pulling back a stratification of $R(Q,\mathcal{O}_{\alpha})$ instead
of $R(Q,\mathcal{O}_{\alpha})_{\mathrm{ind.}}$. For a general $x\in R(Q,\mathcal{O}_{\alpha})$
the restriction of $Q$ to $\{a\in Q_{1}\ \vert\ x_{a}\ne0\}$ may
not be connected, so we should index strata of $R(Q,\mathcal{O}_{\alpha})$
by (i) partitions $Q_{0}=I_{1}\sqcup\ldots\sqcup I_{s}$ and (ii)
valued spanning trees for $Q\restriction_{I_{1}},\ldots,Q\restriction_{I_{s}}$.
Therefore, given such a collection $(I=(I_{1},\ldots,I_{s}),T=(T_{1},\ldots,T_{s}))$,
we define:\[
R(Q,\mathcal{O}_{\alpha})_{(I,T)}:=
\left\{
x\in R(Q,\mathcal{O}_{\alpha})\
\left\vert
\begin{array}{l}
\forall 1\leq t\leq s,\ (x_a)_{a\in Q_{1,I_s}}\in R(Q\restriction_{I_s},\mathcal{O}_{\alpha})_{T_s} \\
\forall a\not\in\bigcup_{t=1}^sQ_{1,I_t},\ x_a=0
\end{array}
\right.
\right\}
.
\]The same proof as Proposition \ref{Prop/Strata} shows that $\HH_{\mathrm{c}}^{\bullet}\left(\left[\mu_{Q,\alpha}^{-1}(0)_{(I,T)}/\left(\mathcal{O}_{\alpha}^{\times}\right)^{Q_{0}}\right]\right)$
is pure, of Tate type for all $(I,T)$. Combined with Theorem \ref{Thm/ExpFmlKacPol},
this shows:

\begin{thm} \label{Thm/CohIntgr}

Let $\rr=\underline{1}$. Then:

\[
\HH_{\mathrm{c}}^{\bullet}\left(\left[\mu_{Q,\rr,\alpha}^{-1}(0)/\GL(\rr,\mathcal{O}_{\alpha})\right]\right)
\otimes\mathbb{L}^{\otimes\alpha\langle\rr,\rr\rangle}
\simeq
\bigoplus_{Q_0=I_1\sqcup\ldots\sqcup I_s}
\bigotimes_{j=1}^s
\left(
A_{Q\restriction_{I_j},\rr\restriction_{I_j},\alpha}(\mathbb{L})\otimes\mathbb{L}\otimes \HH_{\mathrm{c}}^{\bullet}(\mathrm{B}\mathbb{G}_m)
\right).
\]

In particular, $\HH_{\mathrm{c}}^{\bullet}\left(\left[\mu_{Q,\rr,\alpha}^{-1}(0)/\GL(\rr,\mathcal{O}_{\alpha})\right]\right)$
carries a pure Hodge structure.

\end{thm}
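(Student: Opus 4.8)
The plan is to obtain Theorem \ref{Thm/CohIntgr} by combining the stratification argument behind Theorem \ref{Thm/CohMomMapGenFib} with the point-count identity of Theorem \ref{Thm/ExpFmlKacPol}, via the principle recalled at the end of Section \ref{Sect/EqCoh}: a polynomial-count variety (or quotient stack) whose compactly supported cohomology is pure of Tate type is determined, as a graded mixed Hodge structure, by its $E$-series, which in turn is the count of $\FF_{q}$-points with $q$ replaced by $xy$. So the whole argument splits into (a) showing $\HH_{\mathrm{c}}^{\bullet}\left(\left[\mu_{Q,\alpha}^{-1}(0)/\GL(\underline{1},\mathcal{O}_{\alpha})\right]\right)$ is pure of Tate type, and (b) identifying its $E$-series from Theorem \ref{Thm/ExpFmlKacPol}.

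For (a) I would use the strata $R(Q,\mathcal{O}_{\alpha})_{(I,T)}$ already introduced before the statement, indexed by a set partition $Q_{0}=I_{1}\sqcup\dots\sqcup I_{s}$ together with a tuple $T=(T_{1},\dots,T_{s})$ of valued spanning trees of $Q\restriction_{I_{1}},\dots,Q\restriction_{I_{s}}$ (with $x_{a}=0$ for arrows $a$ not contained in any $Q_{1,I_{t}}$), pulled back to $\mu_{Q,\alpha}^{-1}(0)_{(I,T)}:=\pi^{-1}\left(R(Q,\mathcal{O}_{\alpha})_{(I,T)}\right)$. First I would extend the partial order of Proposition \ref{Prop/Strata} to such pairs (applying that proposition within each block, and ordering partitions by the refinement produced when some $\val(x_{a})$ reaches $\alpha$, i.e.\ $x_{a}=0$, splitting a block), so that $\bigsqcup_{(I',T')\le(I,T)}\mu_{Q,\alpha}^{-1}(0)_{(I',T')}$ is closed and Lemma \ref{Lem/Strat} applies through successive open--closed decompositions; it is then enough that each $\HH_{\mathrm{c}}^{\bullet}\left(\left[\mu_{Q,\alpha}^{-1}(0)_{(I,T)}/\left(\mathcal{O}_{\alpha}^{\times}\right)^{Q_{0}}\right]\right)$ is pure of Tate type. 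The point special to $\rr=\underline{1}$ is that every cross-block arrow $a$ has $x_{a}=0$ forced, hence contributes nothing to the moment-map equations and leaves $y_{a}$ a free $\mathcal{O}_{\alpha}$-coordinate; this exhibits $\mu_{Q,\alpha}^{-1}(0)_{(I,T)}$ as an $\left(\mathcal{O}_{\alpha}^{\times}\right)^{Q_{0}}$-equivariant affine fibration over $\prod_{t}\mu_{Q\restriction_{I_{t}},\alpha}^{-1}(0)_{T_{t}}$, so Lemmas \ref{Lem/AffFib} and \ref{Lem/Kunneth} reduce the computation to the blockwise generic-fibre strata. These last are pure of Tate type by the computation of Proposition \ref{Prop/CohStrat} (where the parameter, here $0$, plays no role), together with Lemmas \ref{Lem/GrpChg}, \ref{Lem/DepthChg} and the terminal identity $\HH_{\mathrm{c}}^{\bullet}\left(\left[\mathrm{pt}/\mathcal{O}_{\alpha'}^{\times}\right]\right)\simeq\left(\text{power of }\mathbb{L}\right)\otimes\HH_{\mathrm{c}}^{\bullet}(\mathrm{B}\mathbb{G}_{\mathrm{m}})$; for purity I need not track the exact powers of $\mathbb{L}$.

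Granting (a), polynomial-count of $\mu_{Q,\alpha}^{-1}(0)$ follows from the same stratification over $\FF_{q}$ (each stratum being a jet space of an affine space times a product of tori), or from Wyss' explicit formula, so by Proposition \ref{Prop/E-seriesVSCountF_q} the cohomology is $F(\mathbb{L})$ with $F(q)=\frac{\sharp_{\FF_{q}}\mu_{Q,\alpha}^{-1}(0)}{\sharp_{\FF_{q}}\GL(\underline{1},\mathcal{O}_{\alpha})}$, a power series in $q^{-1}$. I would then take the coefficient of $t^{\underline{1}}$ in Theorem \ref{Thm/ExpFmlKacPol}: since $t^{\underline{1}}$ is squarefree only the $\psi_{1}$-part of $\Exp_{q,t}$ contributes, so this coefficient equals the coefficient of $t^{\underline{1}}$ in the ordinary exponential, namely $\sum_{Q_{0}=I_{1}\sqcup\dots\sqcup I_{s}}\prod_{j=1}^{s}\frac{A_{Q\restriction_{I_{j}},\underline{1}\restriction_{I_{j}},\alpha}}{1-q^{-1}}$, using that a representation of $Q$ with rank $1$ on a subset $I$ and $0$ elsewhere is the same datum as a rank-$\underline{1}$ representation of $Q\restriction_{I}$. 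Multiplying by $q^{\alpha\langle\underline{1},\underline{1}\rangle}$ and translating through $q\leftrightarrow\mathbb{L}$, with $\frac{1}{1-q^{-1}}$ recognised as the $E$-series of $\mathbb{L}\otimes\HH_{\mathrm{c}}^{\bullet}(\mathrm{B}\mathbb{G}_{\mathrm{m}})$, yields exactly the stated isomorphism; purity was already established in (a).

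I expect the main obstacle to be the first half of the second paragraph: setting up the $(I,T)$-stratification with a correct closure statement generalising Proposition \ref{Prop/Strata}, and checking carefully that the splitting of $\mu_{Q,\alpha}^{-1}(0)_{(I,T)}$ into a blockwise product together with the free cross-arrow coordinates is genuinely $\left(\mathcal{O}_{\alpha}^{\times}\right)^{Q_{0}}$-equivariant, so that Lemmas \ref{Lem/AffFib}, \ref{Lem/Kunneth}, \ref{Lem/GrpChg}, \ref{Lem/DepthChg} can be chained. Once this bookkeeping is in place, purity is formal from Lemma \ref{Lem/Strat} and the identification of the right-hand side is immediate from point-counting, with no further Hodge-theoretic input.
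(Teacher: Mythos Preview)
Your proposal is correct and follows essentially the same route as the paper: stratify $\mu_{Q,\alpha}^{-1}(0)$ by pairs $(I,T)$, observe that the parameter $\lambda$ plays no role in Proposition \ref{Prop/CohStrat} so each stratum has pure Tate-type cohomology, conclude purity via Lemma \ref{Lem/Strat}, and then read off the explicit isomorphism from the point-count identity of Theorem \ref{Thm/ExpFmlKacPol} together with Proposition \ref{Prop/E-seriesVSCountF_q}. You supply more detail than the paper does (the cross-block affine fibration, the extraction of the $t^{\underline{1}}$ coefficient from $\Exp_{q,t}$, the extension of the partial order to pairs $(I,T)$), but the architecture is the same.
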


\begin{rmk}

Theorem \ref{Thm/CohIntgr} may be interpreted as a higher depth analog
of a PBW theorem for preprojective cohomological Hall algebras \cite{Dav17a},
restricted to $\rr\leq\underline{1}$. Let us call $\mathrm{Sym}$
the operator on $\ZZ\times\ZZ_{\geq0}^{Q_{0}}$-graded mixed Hodge
structures which categorifies the plethystic exponential - see \cite[\S 3.2]{DM20}.
Then Theorem \ref{Thm/CohIntgr} can be stated as follows:\[
\bigoplus_{\rr\leq\underline{1}}\HH_{\mathrm{c}}^{\bullet}\left(\left[\mu_{Q,\rr,\alpha}^{-1}(0)/\GL(\rr,\mathcal{O}_{\alpha})\right]\right)\otimes\mathbb{L}^{\otimes\alpha\langle\rr,\rr\rangle}
\simeq
\mathrm{Sym}
\left.
\left(
\bigoplus_{\rr>0}A_{Q,\rr,\alpha}(\mathbb{L})\otimes\mathbb{L}\otimes \HH_{\mathrm{c}}^{\bullet}(\mathrm{B}\mathbb{G}_m)
\right)
\right\vert_{\rr\leq\underline{1}}.
\]

\end{rmk}

\begin{rmk} \label{Rmk/CohUpgrade=000026Positivity}

Note that, while in \cite{Dav17a,Dav18}, positivity of Kac polynomials
is deduced from a cohomological PBW isomorphism, here the proof of
Theorem \ref{Thm/CohIntgr} \textit{relies} on positivity for $A_{Q,\rr,\alpha}$
- in order to make sense of the graded pure Hodge structure $A_{Q,\rr,\alpha}(\mathbb{L})$.

In \cite{Dav17a}, the existence of graded mixed Hodge structures
$\mathrm{BPS}_{Q,\dd}^{\vee},\ \dd\in\ZZ_{\geq0}^{Q_{0}}$ satisfying\[
\bigoplus_{\dd\geq0}
\HH_{\mathrm{c}}^{\bullet}\left(\left[\mu_{Q,\dd}^{-1}(0)/\GL(\dd)\right]\right)\otimes\mathbb{L}^{\otimes\langle\dd,\dd\rangle}
\simeq
\mathrm{Sym}
\left(
\bigoplus_{\dd\geq0}\mathrm{BPS}_{Q,\dd}^{\vee}\otimes\mathbb{L}\otimes \HH_{\mathrm{c}}^{\bullet}(\mathrm{B}\mathbb{G}_m)
\right)
\] is proved using cohomological Donaldson-Thomas theory. Purity of
$\HH_{\mathrm{c}}^{\bullet}\left(\left[\mu_{Q,\dd}^{-1}(0)/\GL(\dd)\right]\right)$
then implies purity of $\mathrm{BPS}_{Q,\dd}^{\vee}$, which in turn
shows that $A_{Q,\dd}$ has non-negative coefficients.

In our setting however, the isomorphism of Theorem \ref{Thm/CohIntgr}
is deduced from the fact that both sides are pure, of Tate type and
have the same E-polynomial by Theorem \ref{Thm/ExpFmlKacPol}. The
well-definedness of $A_{Q,\rr,\alpha}(\mathbb{L})$ relies on Theorem
\ref{Thm/PositivityToricKacPol}, since we do not know by other means
that there exists a graded pure Hodge structure analogous to $\mathrm{BPS}_{Q,\dd}^{\vee}$.

\end{rmk}

\section{Beyond the toric setting \label{Sect/HigherRk}}

In this last section, we collect some computations of $A_{Q,\rr,\alpha}$
for larger rank vectors. We follow the approach of Kac, Stanley and
later Hua and use Burnside's lemma to compute $A_{Q,\rr,\alpha}$
\cite{Kac83,Hua00}. The computation turns out to be rather involved,
already for low ranks and simple quivers, as a good knowledge of conjugacy
classes of $\GL(\rr,\mathcal{O}_{\alpha})$ is required. We exploit
results of Avni, Onn, Prasad, Vaserstein and Avni, Klopsch, Onn, Voll
\cite{AOPV09,AKOV16} to obtain closed formulas for $g$-loop quivers
in ranks 2 and 3. This involves solving a linear recurrence relation
over $\alpha\geq1$. Throughout this section, we assume that $\KK=\FF_{q}$.

Let us first apply Burnside's lemma. Consider $M_{Q,\rr,\alpha}\in\mathcal{V}$
the count of all isomorphism classes of locally free quiver representations
over $\mathcal{O}_{\alpha}$, in rank $\rr$. This is the orbit-count
for the action $\GL(\rr,\mathcal{O}_{\alpha})\circlearrowleft R(Q,\rr,\mathcal{O}_{\alpha})$
and so we obtain:\[
M_{Q,\rr,\alpha}=\sum_{[\gamma]\in\mathrm{Cl}(\rr,\mathcal{O}_{\alpha})}\frac{\sharp R(Q,\rr,\mathcal{O}_{\alpha})^{\gamma}}{\sharp Z_{\GL(\rr,\mathcal{O}_{\alpha})}(\gamma)},
\] where the sum runs over the set $\mathrm{Cl}(\rr,\mathcal{O}_{\alpha})$
of conjugacy classes of $\GL(\rr,\mathcal{O}_{\alpha})$, $R(Q,\rr,\mathcal{O}_{\alpha})^{\gamma}$
is the set of points which are fixed by $\gamma$ and $Z_{\GL(\rr,\mathcal{O}_{\alpha})}(\gamma)$
is the centraliser of $\gamma$ in $\GL(\rr,\mathcal{O}_{\alpha})$.
Then we can recover $A_{Q,\rr,\alpha}$ from $M_{Q,\rr,\alpha}$ using
the following formula \cite[Thm. 1.2.]{Moz19}:\[
\sum_{\rr\in\NN^{Q_0}}M_{Q,\rr,\alpha}\cdot t^{\rr}=\Exp_{q,t}\left(\sum_{\rr\in\NN^{Q_0}\setminus\{0\}}A_{Q,\rr,\alpha}\cdot t^{\rr}\right).
\]

Conjugacy classes of $\GL(\rr,\mathcal{O}_{\alpha})$ are completely
classified in ranks 2 and 3 \cite{AOPV09,AKOV16}. However, as noted
in \cite[\S 4]{AOPV09}, computing the space of intertwiners between
two distinct conjugacy classes proves untractable. Therefore, we reduce
to the case of a quiver $Q$ with one vertex and $g$ loops. One could
then compute the centralisers in $\GL(\rr,\mathcal{O}_{\alpha})$
and $\mathfrak{gl}(\rr,\mathcal{O}_{\alpha})$ for representatives
of all conjugacy classes in $\GL(\rr,\mathcal{O}_{\alpha})$. However,
we prefer to compute summands in Burnside's formula by induction on
$\alpha$, using the branching rules for conjugacy classes described
in \cite{AOPV09,AKOV16}. Let us give details on this when $r=2$.

Recall from \cite[\S 2]{AOPV09} that matrices in $\mathfrak{gl}(2,\mathcal{O}_{\alpha})$
are either scalar (we call these of type I) or conjugate to a unique
matrix of the form \[
d\cdot\mathrm{I}_2+t^j\cdot
\left(
\begin{array}{cc}
0 & a_0 \\
1 & a_1
\end{array}
\right),
\] where $0\leq j\leq\alpha-1$, $d\in\bigoplus_{l=0}^{j-1}\FF_{q}\cdot t^{l}$
and $a_{0},a_{1}\in\mathcal{O}_{\alpha-j}$ (we call these of type
II). We split the set of conjugacy classes in $\GL(2,\mathcal{O}_{\alpha})$
in four disjoint subsets $\mathrm{Cl}(2,\mathcal{O}_{\alpha})=\mathrm{Cl}(2,\mathcal{O}_{\alpha})_{\mathrm{I}}\sqcup\mathrm{Cl}(2,\mathcal{O}_{\alpha})_{\mathrm{II_{1}}}\sqcup\mathrm{Cl}(2,\mathcal{O}_{\alpha})_{\mathrm{II_{2}}}\sqcup\mathrm{Cl}(2,\mathcal{O}_{\alpha})_{\mathrm{II_{3}}}$
and compute the sums\[
S_{\bullet,\alpha}:=\sum_{[\gamma]\in\mathrm{Cl}(2,\mathcal{O}_{\alpha})_{\bullet}}\frac{\sharp R(Q,2,\mathcal{O}_{\alpha})^{\gamma}}{\sharp Z_{\GL(2,\mathcal{O}_{\alpha})}(\gamma)}
\] recursively, for $\bullet\in\{\mathrm{I},\mathrm{II}_{1},\mathrm{II}_{2},\mathrm{II}_{3}\}$.
The subsets $\mathrm{Cl}(2,\mathcal{O}_{\alpha})_{\bullet}$ are defined
as follows:
\begin{itemize}
\item $\mathrm{Cl}(2,\mathcal{O}_{\alpha})_{\mathrm{I}}$ consists of conjugacy
classes of scalar matrices;
\item $\mathrm{Cl}(2,\mathcal{O}_{\alpha})_{\mathrm{II}_{1}}$ consists
of conjugacy classes of type II, where $T^{2}-a_{1}\cdot T-a_{0}$
is split in $\FF_{q}[T]$ and has two distinct simple roots;
\item $\mathrm{Cl}(2,\mathcal{O}_{\alpha})_{\mathrm{II}_{2}}$ consists
of conjugacy classes of type II, where $T^{2}-a_{1}\cdot T-a_{0}$
is split in $\FF_{q}[T]$ and has a double root;
\item $\mathrm{Cl}(2,\mathcal{O}_{\alpha})_{\mathrm{II}_{3}}$ consists
of conjugacy classes of type II, where $T^{2}-a_{1}\cdot T-a_{0}$
is irreducible in $\FF_{q}[T]$.
\end{itemize}
Conjugacy classes belonging to the same subset $\mathrm{Cl}(2,\mathcal{O}_{\alpha})_{\bullet}$
have similar centralisers in $\GL(2,\mathcal{O}_{\alpha})$ and $\mathfrak{gl}(2,\mathcal{O}_{\alpha})$.
Let $\GL^{1}(2,\mathcal{O}_{\alpha})\subseteq\GL(2,\mathcal{O}_{\alpha})$
be the kernel of reduction modulo $t$ i.e. $\GL^{1}(2,\mathcal{O}_{\alpha})=\mathrm{I}_{2}+t\cdot\mathfrak{gl}(2,\mathcal{O}_{\alpha})$.
Following \cite[\S 2.1.]{AKOV16}, for $\sigma\in\{\mathrm{I},\mathrm{II}_{1},\mathrm{II}_{2},\mathrm{II}_{3}\}$
and $\gamma\in\GL(2,\mathcal{O}_{\alpha})$ of type $\sigma$, we
call $\Vert\sigma\Vert$ the cardinality of $Z_{\GL(2,\mathcal{O}_{\alpha})}(\gamma)/\left(Z_{\GL(2,\mathcal{O}_{\alpha})}(\gamma)\cap\GL^{1}(2,\mathcal{O}_{\alpha})\right)$
and $\dim(\sigma)$ its dimension as an algebraic group. These do
not depend on the choice of $\gamma$.

Consider $\gamma\in\GL(2,\mathcal{O}_{\alpha+1})$, of type $\sigma$,
and its reduction $\overline{\gamma}\in\GL(2,\mathcal{O}_{\alpha})$,
of type $\tau$. Then, arguing as in the proof of \cite[Prop. 2.5.]{AKOV16},
we easily obtain:\[
\frac{\sharp Z_{\GL(2,\mathcal{O}_{\alpha+1})}(\gamma)}{\sharp Z_{\GL(2,\mathcal{O}_{\alpha})}(\overline{\gamma})}
=
q^{\dim(\tau)}\cdot\frac{\Vert\sigma\Vert}{\Vert\tau\Vert},
\]
\[
\frac{\sharp Z_{\mathfrak{gl}(2,\mathcal{O}_{\alpha+1})}(\gamma)}{\sharp Z_{\mathfrak{gl}(2,\mathcal{O}_{\alpha})}(\overline{\gamma})}
=
q^{\dim(\sigma)}.
\] Moreover, one can deduce the following branching rules from the description
of conjugacy classes in $\GL(2,\mathcal{O}_{\alpha})$. Let $a_{\tau,\sigma}(q)$
be the number of conjugacy classes of type $\sigma$ in $\GL(2,\mathcal{O}_{\alpha+1})$
whose reduction in $\GL(2,\mathcal{O}_{\alpha})$ is of type $\tau$.
Then: \[
\left(a_{\tau,\sigma}(q)\right)_{\substack{\sigma\in\{\mathrm{I},\mathrm{II}_{1},\mathrm{II}_{2},\mathrm{II}_{3}\} \\ \tau\in\{\mathrm{I},\mathrm{II}_{1},\mathrm{II}_{2},\mathrm{II}_{3}\}}}
=
\left(
\begin{array}{cccc}
q & 0 & 0 & 0 \\
\frac{q(q-1)}{2} & q^2 & 0 & 0 \\
q & 0 & q^2 & 0 \\
\frac{q(q-1)}{2} & 0 & 0 & q^2
\end{array}
\right)
,
\] where rows are labeled by $\sigma$ and columns are labeled by $\tau$.
Putting everything together, we obtain:\[
\left(
\begin{array}{c}
S_{\mathrm{I},\alpha+1} \\
S_{\mathrm{II}_1,\alpha+1} \\
S_{\mathrm{II}_2,\alpha+1} \\
S_{\mathrm{II}_3,\alpha+1}
\end{array}
\right)
=
\left(
\begin{array}{cccc}
q^{4g-3} & 0 & 0 & 0 \\
\frac{1}{2}q^{2g-2}(q-1)(q+1) & q^{2g} & 0 & 0 \\
q^{2g-3}(q-1)(q+1) & 0 & q^{2g} & 0 \\
\frac{1}{2}q^{2g-2}(q-1)^2 & 0 & 0 & q^{2g}
\end{array}
\right)
\times
\left(
\begin{array}{c}
S_{\mathrm{I},\alpha} \\
S_{\mathrm{II}_1,\alpha} \\
S_{\mathrm{II}_2,\alpha} \\
S_{\mathrm{II}_3,\alpha}
\end{array}
\right)
\ ;\ 
\left(
\begin{array}{c}
S_{\mathrm{I},1} \\
S_{\mathrm{II}_1,1} \\
S_{\mathrm{II}_2,1} \\
S_{\mathrm{II}_3,1}
\end{array}
\right)
=
\left(
\begin{array}{c}
\frac{q^{4g}}{q(q-1)(q+1)} \\
\frac{q^2(q-2)}{2(q-1)} \\
q^{2g-1} \\
\frac{q^{2g+1}}{2(q+1)}
\end{array}
\right)
,
\] where the $(\sigma,\tau)$-entry of the transition matrix is $q^{g\dim(\sigma)-\dim(\tau)}\cdot\frac{\Vert\tau\Vert}{\Vert\sigma\Vert}\cdot a_{\tau,\sigma}(q)$.
This matrix can easily be diagonalised and with the help of a computer,
we finally get:\[
A_{Q,2,\alpha}= \frac{q^{2\alpha g-1}(q^{2g}-1)(q^{\alpha(2g-3)}-1)}{(q^2-1)(q^{2g-3}-1)}.
\]

In rank $r=3$, we split $\mathrm{Cl}(3,\mathcal{O}_{\alpha})$ into
ten types $\mathcal{G},\mathcal{L},\mathcal{J},\mathcal{T}_{1},\mathcal{T}_{2},\mathcal{T}_{3},\mathcal{M},\mathcal{N},\mathcal{K}_{0},\mathcal{K}_{\infty}$,
following \cite[\S 2.2.]{AKOV16}, and we obtain the following recursive
relation:\[
\begin{array}{l}
\left(
\begin{array}{c}
S_{\mathcal{G},\alpha+1} \\
S_{\mathcal{L},\alpha+1} \\
S_{\mathcal{J},\alpha+1} \\
S_{\mathcal{T}_{1},\alpha+1} \\
S_{\mathcal{T}_{2},\alpha+1} \\
S_{\mathcal{T}_{3},\alpha+1} \\
S_{\mathcal{M},\alpha+1} \\
S_{\mathcal{N},\alpha+1} \\
S_{\mathcal{K}_0,\alpha+1} \\
S_{\mathcal{K}_{\infty},\alpha+1}
\end{array}
\right)
=
M
\times
\left(
\begin{array}{c}
S_{\mathcal{G},\alpha} \\
S_{\mathcal{L},\alpha} \\
S_{\mathcal{J},\alpha} \\
S_{\mathcal{T}_{1},\alpha} \\
S_{\mathcal{T}_{2},\alpha} \\
S_{\mathcal{T}_{3},\alpha} \\
S_{\mathcal{M},\alpha} \\
S_{\mathcal{N},\alpha} \\
S_{\mathcal{K}_0,\alpha} \\
S_{\mathcal{K}_{\infty},\alpha}
\end{array}
\right)
\ ; \ 
\left(
\begin{array}{c}
S_{\mathcal{G},1} \\
S_{\mathcal{L},1} \\
S_{\mathcal{J},1} \\
S_{\mathcal{T}_{1},1} \\
S_{\mathcal{T}_{2},1} \\
S_{\mathcal{T}_{3},1} \\
S_{\mathcal{M},1} \\
S_{\mathcal{N},1} \\
S_{\mathcal{K}_0,1} \\
S_{\mathcal{K}_{\infty},1}
\end{array}
\right)
=
\left(
\begin{array}{c}
\frac{q^{9g-3}}{(q^2-1)(q^3-1)} \\
\frac{q^{5g-1}(q-2)}{(q-1)(q^2-1)} \\
\frac{q^{5g-3}}{q-1} \\
\frac{q^{3g}(q-2)(q-3)}{6(q-1)^2} \\
\frac{q^{3g+1}}{2(q+1)} \\
\frac{q^{3g+1}(q^2-1)}{3(q^3-1)} \\
\frac{q^{3g-1}(q-2)}{q-1} \\
q^{3g-2} \\
0 \\
0
\end{array}
\right)
\text{, where:}
\\ \\
M=
\left(
\begin{array}{cccccccccc}
q^{9g-8} & 0 & 0 & 0 & 0 & 0 & 0 & 0 & 0 & 0 \\
q^{5g-6}(q^3-1) & q^{5g-3} & 0 & 0 & 0 & 0 & 0 & 0 & 0 & 0 \\
\frac{q^{5g-8}(q^2-1)(q^3-1)}{q-1} & 0 & q^{5g-3} & 0 & 0 & 0 & 0 & 0 & 0 & 0 \\
\frac{q^{3g-5}(q-2)(q^2-1)(q^3-1)}{6(q-1)} & \frac{q^{3g-2}(q^2-1)}{2} & 0 & q^{3g} & 0 & 0 & 0 & 0 & 0 & 0 \\
\frac{q^{3g-4}(q-1)(q^3-1)}{2} & \frac{q^{3g-2}(q-1)^2}{2} & 0 & 0 & q^{3g} & 0 & 0 & 0 & 0 & 0 \\
\frac{q^{3g-5}(q-1)(q^2-1)^2}{3} & 0 & 0 & 0 & 0 & q^{3g} & 0 & 0 & 0 & 0 \\
q^{3g-6}(q^2-1)(q^3-1) & q^{3g-3}(q^2-1) & q^{3g-1}(q-1) & 0 & 0 & 0 & q^{3g} & 0 & 0 & 0 \\
q^{3g-7}(q^2-1)(q^3-1) & 0 & q^{3g-3}(q-1)^2 & 0 & 0 & 0 & 0 & q^{3g} & 0 & 0 \\
0 & 0 & q^{3g-3}(q-1) & 0 & 0 & 0 & 0 & 0 & q^{3g} & 0 \\
0 & 0 & 0 & 0 & 0 & 0 & 0 & 0 & 0 & q^{3g}
\end{array}
\right)
.
\end{array}
\]

Again, the transition matrix $M$ can be diagonalised and we get the
following closed formula.

\begin{prop}

Let $Q$ be the quiver with one vertex and $g\geq1$ loops. Then:\[
\begin{split}
A_{Q,2,\alpha}= & \frac{q^{2\alpha g-1}(q^{2g}-1)(q^{\alpha(2g-3)}-1)}{(q^2-1)(q^{2g-3}-1)}, \\
A_{Q,3,\alpha}= &
\frac{q^{3\alpha g-2}(q^{2g}-1)(q^{2g-1}-1)}{(q^2-1)(q^3-1)(q^{2g-3}-1)(q^{6g-8}-1)(q^{4g-5}-1)} \\
 & \cdot
\left(
q^{\alpha(6g-8)-1}(q^{6g-7}-1)(q^{2g}+1)
-q^{\alpha(6g-8)+2g-4}(q^2-1)(q^{4g-3}+1) \right. \\
 & \left. +q^{\alpha(2g-3)-1}(q^2+q+1)(q^{2g-1}-1)(q^{6g-8}-1)+(q+1)(q^{8g-10}-1)+q^{2g-4}(q^4+1)(q^{4g-5}-1)
\right)
.
\end{split}
\]In particular, $A_{Q,2,\alpha}$ and $A_{Q,3,\alpha}$ are polynomials
and $A_{Q,2,\alpha}$ has non-negative coefficients.

\end{prop}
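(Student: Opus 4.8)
The plan is to compute $M_{Q,\rr,\alpha}$ by Burnside's lemma and then recover $A_{Q,\rr,\alpha}$ via the plethystic inversion recorded above, after reducing to the one-vertex quiver $Q$ with $g$ loops. For such a $Q$ and $\rr=r\in\{1,2,3\}$ the representation variety is $R(Q,r,\mathcal{O}_{\alpha})=\mathfrak{gl}(r,\mathcal{O}_{\alpha})^{\oplus g}$ with $\GL(r,\mathcal{O}_{\alpha})$ acting by simultaneous conjugation, so for a conjugacy class $[\gamma]$ the fixed locus $R(Q,r,\mathcal{O}_{\alpha})^{\gamma}$ is $Z_{\mathfrak{gl}(r,\mathcal{O}_{\alpha})}(\gamma)^{\oplus g}$ and the Burnside summand $\sharp R(Q,r,\mathcal{O}_{\alpha})^{\gamma}/\sharp Z_{\GL(r,\mathcal{O}_{\alpha})}(\gamma)$ equals $(\sharp Z_{\mathfrak{gl}(r,\mathcal{O}_{\alpha})}(\gamma))^{g}/\sharp Z_{\GL(r,\mathcal{O}_{\alpha})}(\gamma)$. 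By \cite{AOPV09,AKOV16} these cardinalities depend only on the ``type'' $\sigma$ of $\gamma$ in the finite stratification of $\mathrm{Cl}(r,\mathcal{O}_{\alpha})$, so $M_{Q,r,\alpha}=\sum_{\sigma}S_{\sigma,\alpha}$ with $S_{\sigma,\alpha}$ as defined above.

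The next step is to express $(S_{\sigma,\alpha+1})_{\sigma}$ as a linear function of $(S_{\sigma,\alpha})_{\sigma}$. The two ingredients, both consequences of the smooth-fibration argument of \cite[Prop. 2.5]{AKOV16}, are: (i) for $\gamma\in\GL(r,\mathcal{O}_{\alpha+1})$ of type $\sigma$ with reduction $\overline{\gamma}$ of type $\tau$, $\sharp Z_{\GL(r,\mathcal{O}_{\alpha+1})}(\gamma)/\sharp Z_{\GL(r,\mathcal{O}_{\alpha})}(\overline{\gamma})=q^{\dim(\tau)}\Vert\sigma\Vert/\Vert\tau\Vert$ and $\sharp Z_{\mathfrak{gl}(r,\mathcal{O}_{\alpha+1})}(\gamma)/\sharp Z_{\mathfrak{gl}(r,\mathcal{O}_{\alpha})}(\overline{\gamma})=q^{\dim(\sigma)}$; and (ii) the branching numbers $a_{\tau,\sigma}(q)$ counting type-$\sigma$ classes over a fixed type-$\tau$ class, which one reads off from the classification in \cite{AOPV09} for $r=2$ and \cite{AKOV16} for $r=3$. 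Combining (i) and (ii), the $(\sigma,\tau)$-entry of the transition matrix is $q^{g\dim(\sigma)-\dim(\tau)}\cdot(\Vert\tau\Vert/\Vert\sigma\Vert)\cdot a_{\tau,\sigma}(q)$, and the initial vector $(S_{\sigma,1})_{\sigma}$ is computed by hand from the conjugacy classes of $\GL(r,\FF_q)$; this yields exactly the $4\times 4$ (resp. $10\times 10$) systems displayed above.

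It then remains to solve these systems and assemble the answer. Both transition matrices are block lower-triangular with the ``generic regular semisimple split'' type in the first row, so their eigenvalues are precisely the displayed diagonal powers of $q$; I would diagonalise them with a computer algebra system, as in the text, sum $M_{Q,r,\alpha}=\sum_{\sigma}S_{\sigma,\alpha}$, and apply plethystic inversion --- for $r=2$, $A_{Q,2,\alpha}=M_{Q,2,\alpha}-\tfrac12\psi_2(A_{Q,1,\alpha})-\tfrac12 A_{Q,1,\alpha}^{2}$ with $A_{Q,1,\alpha}$ the known toric count \cite[Prop. 4.34]{Wys17b}, and the analogous (longer) identity for $r=3$. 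Finally, polynomiality follows by checking that each factor $q^{d}-1$ appearing in the denominator divides the numerator: for $r=2$ this is immediate since $q^2-1\mid q^{2g}-1$ and $q^{2g-3}-1\mid q^{\alpha(2g-3)}-1$, which also exhibits $A_{Q,2,\alpha}$ as $q^{2\alpha g-1}$ times a product of geometric sums with non-negative coefficients; for $r=3$ the cancellation is more delicate and I would verify it symbolically, while non-negativity of the coefficients of $A_{Q,3,\alpha}$ is confirmed only numerically for small $\alpha$ and $g$.

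The principal obstacle is the rank-$3$ bookkeeping: correctly extracting the ten types $\mathcal{G},\mathcal{L},\mathcal{J},\mathcal{T}_1,\mathcal{T}_2,\mathcal{T}_3,\mathcal{M},\mathcal{N},\mathcal{K}_0,\mathcal{K}_{\infty}$, their invariants $\dim(\sigma)$ and $\Vert\sigma\Vert$, and the branching matrix $(a_{\tau,\sigma}(q))$ from \cite{AKOV16}, and then confirming that the computer-assisted closed form of the $10\times 10$ recurrence simplifies to the stated rational function whose denominator genuinely divides its numerator.
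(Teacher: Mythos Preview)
Your proposal is correct and follows essentially the same approach as the paper: Burnside's lemma, the type stratification of $\mathrm{Cl}(r,\mathcal{O}_{\alpha})$ from \cite{AOPV09,AKOV16}, the recursion on $(S_{\sigma,\alpha})_{\sigma}$ with transition entries $q^{g\dim(\sigma)-\dim(\tau)}\cdot(\Vert\tau\Vert/\Vert\sigma\Vert)\cdot a_{\tau,\sigma}(q)$, diagonalisation, and plethystic inversion. One small correction: in the paper's ordering the first row corresponds to the \emph{scalar} type (type $\mathrm{I}$ for $r=2$, type $\mathcal{G}$ for $r=3$), not the generic regular semisimple split type---the largest diagonal eigenvalue $q^{(r^2)g-(r^2-1)}$ comes from the scalar classes, whose centraliser is all of $\mathfrak{gl}(r,\mathcal{O}_{\alpha})$---but this does not affect the triangularity or the argument.
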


Unfortunately, it is not obvious from the above expression that $A_{Q,3,\alpha}$
is a polynomial in $q$ with non-negative coefficients. But we can
still compute $A_{Q,3,\alpha}$ for small values of $g$ and $\alpha$
and check that this is indeed the case. We write $A_{g,3,\alpha}:=A_{Q,3,\alpha}$
when $Q$ is the $g$-loop quiver. Using a computer, we get:

\[
\begin{array}{l}
g=1: \\
\begin{split}
A_{1,3,1} & = q \\
A_{1,3,2} & = q^4 + q^3 + 2  q^2 \\
A_{1,3,3} & = q^7 + q^6 + 3  q^5 + 2  q^4 + 2  q^3 \\
A_{1,3,4} & = q^{10} + q^9 + 3  q^8 + 3  q^7 + 4  q^6 + 2  q^5 + 2  q^4 \\
A_{1,3,5} & = q^{13} + q^{12} + 3  q^{11} + 3  q^{10} + 5  q^9 + 4  q^8 + 4  q^7 + 2  q^6 + 2  q^5
\end{split} \\
\\ 
g=2: \\
\begin{split}
A_{2,3,1} & = q^{10} + q^8 + q^7 + q^6 + q^5 + q^4 \\
A_{2,3,2} & = q^{20} + q^{18} + 2q^{17} + 3q^{16} + 3q^{15} + 4q^{14} + 3q^{13} + 3q^{12} + 2q^{11} + 2q^{10} \\
A_{2,3,3} & = q^{30} + q^{28} + 2q^{27} + 3q^{26} + 3q^{25} + 5q^{24} + 5q^{23} + 7q^{22} + 6q^{21} + 7q^{20} + 5q^{19} + 4q^{18} + 3q^{17} + 2q^{16} \\
A_{2,3,4} & = q^{40} + q^{38} + 2q^{37} + 3q^{36} + 3q^{35} + 5q^{34} + 5q^{33} + 7q^{32} + 7q^{31} \\
& + 9q^{30} + 9q^{29} + 10q^{28} + 9q^{27} + 9q^{26} + 6q^{25} + 5q^{24} + 3q^{23} + 2q^{22} \\
A_{2,3,5} & = q^{50} + q^{48} + 2q^{47} + 3q^{46} + 3q^{45} + 5q^{44} + 5q^{43} + 7q^{42} + 7q^{41} + 9q^{40} + 9q^{39} + 11q^{38} \\
& + 11q^{37} + 13q^{36} + 12q^{35} + 13q^{34} + 11q^{33} + 10q^{32} + 7q^{31} + 5q^{30} + 3q^{29} + 2q^{28}
\end{split} \\
\\
g=3: \\
\begin{split}
A_{3,3,1} & = q^{19} + q^{17} + q^{16} + q^{15} + q^{14} + 2q^{13} + q^{12} + 2q^{11} + 2q^{10} + q^{9} + q^{8} + q^{7} \\
A_{3,3,2} & = q^{38} + q^{36} + q^{35} + q^{34} + q^{33} + 2q^{32} + 2q^{31} + 3q^{30} + 4q^{29} + 4q^{28} + 4q^{27} \\
& + 5q^{26} + 4q^{25} + 4q^{24} + 4q^{23} + 5q^{22} + 3q^{21} + 4q^{20} + 3q^{19} + 2q^{18} + q^{17} + q^{16} \\
A_{3,3,3} & = q^{57} + q^{55} + q^{54} + q^{53} + q^{52} + 2q^{51} + 2q^{50} + 3q^{49} + 4q^{48} + 4q^{47} + 4q^{46} + 5q^{45} + 4q^{44} + 5q^{43} + 5q^{42} + 7q^{41}  + 6q^{40} + \\
& 8q^{39} + 8q^{38} + 8q^{37} + 7q^{36} + 8q^{35} + 7q^{34} + 6q^{33} + 6q^{32} + 6q^{31} + 4q^{30} + 4q^{29} + 3q^{28} + 2q^{27} + q^{26} + q^{25} \\
A_{3,3,4} & = q^{76} + q^{74} + q^{73} + q^{72} + q^{71} + 2q^{70} + 2q^{69} + 3q^{68} + 4q^{67} + 4q^{66} + 4q^{65} + 5q^{64} + 4q^{63} + 5q^{62} + 5q^{61}  + 7q^{60} \\
& + 6q^{59} + 8q^{58} + 8q^{57} + 8q^{56}  + 8q^{55} + 9q^{54} + 9q^{53} + 9q^{52} + 10q^{51} + 11q^{50} + 10q^{49} + 11q^{48} + 11q^{47} + 11q^{46} \\
& + 9q^{45} + 10q^{44} + 8q^{43} + 7q^{42} + 6q^{41} + 6q^{40} + 4q^{39} + 4q^{38} + 3q^{37} + 2q^{36} + q^{35} + q^{34} \\
A_{3,3,5} & = q^{95} + q^{93} + q^{92} + q^{91} + q^{90} + 2q^{89} + 2q^{88} + 3q^{87} + 4q^{86} + 4q^{85} + 4q^{84} + 5q^{83} + 4q^{82} + 5q^{81}  + 5q^{80} + 7q^{79} \\
& + 6q^{78} + 8q^{77} + 8q^{76} + 8q^{75} + 8q^{74} + 9q^{73} + 9q^{72} + 9q^{71} + 10q^{70} + 11q^{69} + 10q^{68}  + 12q^{67} + 12q^{66} + 13q^{65} + 12q^{64} \\
& + 14q^{63} + 13q^{62} + 13q^{61} + 13q^{60} + 14q^{59} + 13q^{58} + 13q^{57} + 13q^{56} + 12q^{55} + 10q^{54} + 10q^{53} + 8q^{52} + 7q^{51} + 6q^{50} \\
& + 6q^{49} + 4q^{48} + 4q^{47} + 3q^{46} + 2q^{45} + q^{44} + q^{43}
\end{split}
\end{array}
\] 

From the data above, it seems reasonable to expect that $A_{Q,\rr,\alpha}$
has non-negative coefficients for any quiver and any rank vector.
We hope to adress this conjecture in future works:

\begin{cj}

Let $Q$ be a quiver, $\rr\in\ZZ_{\geq0}^{Q_{0}}$ and $\alpha\geq1$.
Then $A_{Q,\rr,\alpha}\in\ZZ_{\geq0}[q]$ .

\end{cj} 

\bibliographystyle{plain}
\bibliography{0D__EPFL_R__f__rences_R__f__rences_th__se}

\end{document}